\documentclass{article}

\usepackage[latin1]{inputenc}
\usepackage{amsmath}
\usepackage{amsfonts}
\usepackage{amssymb}
\usepackage{xcolor}
\usepackage{graphicx}
\usepackage{subcaption}
\usepackage{algorithmic}
\usepackage{algorithm}[0.1]

\usepackage{hyperref} 

\ifdefined\proof\else \newenvironment{proof}{\small{\bf Proof.}}{\hfill$\Box$\normalsize\bigskip} \fi

\newcounter{myenumerate}
\renewcommand{\themyenumerate}{\alph{myenumerate}}
\newenvironment{myenumerate}{\begin{list}{--(\themyenumerate)}{
\usecounter{myenumerate}\setlength{\labelsep}{0.5ex}
\setlength{\leftmargin}{0pt}\setlength{\labelwidth}{-\labelsep}
}}{\end{list}}

\ifdefined\theorem\else \newtheorem{theorem}{Theorem}\fi
\ifdefined\proposition\else \newtheorem{proposition}[theorem]{Proposition}\fi
\ifdefined\definition\else \newtheorem{definition}[theorem]{Definition}\fi
\ifdefined\lemma\else \newtheorem{lemma}[theorem]{Lemma}\fi
\ifdefined\corollary\else \newtheorem{corollary}[theorem]{Corollary}\fi
\ifdefined\remark\else \newtheorem{remark}[theorem]{Remark}\fi
\ifdefined\assumption\else \newtheorem{assumption}{Assumption}\fi
\ifdefined\example\else \newtheorem{example}{Example}\fi
\ifdefined\notations\else \fi

\newcommand{\NomAlgo}{Tropical Dynamic Programming}
\newcommand{\nomalgo}{TDP}
\newcommand{\optop}{\mathop{\mathrm{opt}}}

\newcommand{\Id}{\;\mathrm{Id}}
\newcommand{\id}{\mathrm{I}}

\newcommand{\Pro}{\mathbb{P}}

\ifdefined\epi\else
\newcommand{\epi}{\mathop{\mathrm{epi}}}
\newcommand{\dom}{\mathop{\mathrm{dom}}}
\newcommand{\Sphere}{\mathbf{S}}
\newcommand{\argmin}{\mathop{\arg\min}}

\newcommand{\eqsepv}{\; , \enspace}
\newcommand{\eqfinv}{\; ,}
\newcommand{\eqfinp}{\; .}

\newcommand{\np}[1]{(#1)}
\newcommand{\bp}[1]{\big(#1\big)}
\newcommand{\Bp}[1]{\Big(#1\Big)}

\newcommand{\nc}[1]{[#1]}
\newcommand{\bc}[1]{\big[#1\big]}
\newcommand{\Bc}[1]{\Big[#1\Big]}

\newcommand{\na}[1]{\left\{#1\right\}}
\newcommand{\ba}[1]{\big\{#1\big\}}

\newcommand{\bset}[2]{\ba{#1\,\big|\, #2}}

\fi

\newcommand{\Mat}{\mathbb{M}}   
\newcommand{\Sy}{\mathbb{S}}    
\newcommand{\Sn}{\mathbb{S}^+} 
\newcommand{\Snp}{\mathbb{S}^{++}} 

\newcommand{\Oracle}{\text{Trial point Oracle}} 

\newcommand{\Selection}[2]{ 
  \def\tst{#1}
  \def\testt{#2}
  \ifx\testt\empty
    \ifx\tst\empty
      \mathcal{S}_t
    \else
      \mathcal{S}_t\left[ #1 \right]
    \fi
  \else
    \mathcal{S}_t\left[#1\right]\left(#2\right)
  \fi
}

\newcommand{\ce}[1]{[\![#1]\!]}         

\newcommand{\indi}[1]{\mathbb{\delta}_{#1}}    

\newcommand{\B}{\mathcal{B}} 

\newcommand{\discret}{v}	
\newcommand{\Discret}{\mathbb{V}}	

\newcommand{\func}{\phi} 
\newcommand{\Func}{F}
\newcommand{\Funcb}{\mathbf{F}} 
\newcommand{\Funcbb}{\mathbb{F}} 

\newcommand{\R}{\mathbb{R}} 
\newcommand{\N}{\mathbb{N}} 
\newcommand{\U}{\mathbb{U}} 
\newcommand{\X}{\mathbb{X}} 
\newcommand{\V}{\mathbb{V}} 
\newcommand{\Q}{\mathbb{Q}}
\newcommand{\ArbitrarySet}{\mathbb{E}}
\newcommand{\vmin}{\beta}
\newcommand{\vmax}{\gamma}
\newcommand{\vtilden}{\widetilde{V}_{t,N}}
\newcommand{\vopt}{\mathcal{V}}

\newcommand{\SDDP}{\text{\tiny SDDP}}
\newcommand{\MinPlus}{\text{\tiny min-plus}}

\newcommand{\lambdamin}{\lambda_{\tiny \text{min}}}
\newcommand{\lambdamax}{\lambda_{\tiny \text{max}}}

\usepackage{a4wide}
\title{A stochastic algorithm for deterministic multistage optimization problems}

\author{Marianne Akian\thanks{INRIA Saclay Ile-de-France and CMAP, \'{E}cole Polytechnique, CNRS, France.}
  \and Jean-Philippe Chancelier\thanks{CERMICS, \'{E}cole des Ponts ParisTech, France}
  \and Beno\^{i}t Tran\footnotemark[2]\,\,\footnotemark[1]
}

\begin{document}

\maketitle

\begin{abstract}
 Several attempts to dampen the curse of dimensionality problem of the Dynamic
 Programming approach for solving multistage optimization problems have been
 investigated. One popular way to address this issue is the Stochastic Dual
 Dynamic Programming method (SDDP) introduced by Perreira and Pinto in 1991 for
 Markov Decision Processes. Assuming that the value function is convex (for a
 minimization problem), one builds a non-decreasing sequence of lower (or
 outer) convex approximations of the value function. Those convex
 approximations are constructed as a supremum of affine cuts.

 On continuous time deterministic optimal control problems, assuming that the
 value function is semiconvex, Zheng Qu, inspired by the work of McEneaney,
 introduced in 2013 a stochastic max-plus scheme that builds upper (or inner)
 non-increasing approximations of the value function.

 In this note, we build a common framework for both the SDDP and a discrete
 time version of Zheng Qu's algorithm to solve deterministic multistage
 optimization problems. Our algorithm generates monotone approximations of the
 value functions as a pointwise supremum, or infimum, of basic (affine or
 quadratic for example) functions which are randomly selected.  We give
 sufficient conditions on the way basic functions are selected in order to
 ensure almost sure convergence of the approximations to the value function on
 a set of interest.
\end{abstract}

\section{Introduction}

Throughout this paper, we aim to study a deterministic optimal control problem
with discrete time. Informally, given a time $t$ and a state $x_t \in \X$, one
can apply a control $u_t \in \U$ and the next state is given by the dynamic
$f_t$, that is $x_{t+1} = f_t \left( x_t, u_t \right)$. Then, one wants to
minimize the sum of costs $c_t\left( x_t, u_t \right)$ induced by the controls
starting from a given state $x_0$ and during a given time horizon
$T$. Furthermore, one can add some final restrictions on the states at time $T$
which will be modeled by an additional cost function $\psi$ depending only on
the final state $x_T$.  We will call such optimal control problems,
\emph{multistage optimization problems} and \emph{switched multistage
 optimization problems} if the controls are both continuous and discrete:
\begin{subequations}
 \label{MultistageProblem}
 \begin{align}
  \min_{ \substack{x=(x_0, \ldots, x_T)\in \X^{T+1} \\ u = (u_0, \ldots u_{T-1})\in \U^T }}
   & \sum_{t=0}^{T-1} c_t \np{x_t, u_t} + \psi(x_T) \\
  \text{ s.t. }
   & \forall t \in \ce{0,T{-}1} \eqsepv             
  \ x_{t+1} = f_t\np{x_t, u_t} \text{ and } x_0 \in \X \ \text{ given}\eqfinp
 \end{align}
\end{subequations}

One can solve the multistage Problem~\eqref{MultistageProblem} by Dynamic
Programming as introduced by Richard Bellman around
1950~\cite{Be1954,Dr2002}. This method breaks the multistage
Problem~\eqref{MultistageProblem} into $T$ sub-problems that one can solve by
backward recursion over time. More precisely, denoting by
$\B_t : \overline{\R}^\X \to \overline{\R}^\X$ the operator from the set of
functions over $\X$ that may take infinite values to itself, defined by
\begin{equation}
 \label{BellmanOperator}
 \B_t\np{\func} : x\mapsto \min_{u\in \U} \Bp{c_t(x,u) + \func\bp{f_t(x,u)}}\eqfinv
\end{equation}
one can show (see for example~\cite{Be2016}) that solving Problem~\eqref{MultistageProblem} amounts to solve the following sequence of sub-problems:
\begin{equation}\label{DynamicProgramming}
 V_T  = \psi \quad \text{ and }\quad  \forall t\in \ce{0,T-1} \quad V_t = \B_t(V_{t+1})\eqfinp
\end{equation}
We will call each operator $\B_t$ the \emph{Bellman operator} at time $t$ and
each equation in~\eqref{DynamicProgramming} will be called the \emph{Bellman
 equation} at time $t$. Lastly, the function $V_t$ defined in
Equation~\eqref{DynamicProgramming} will be called the (Bellman) \emph{value
 function} at time $t$. Note that the value of Problem~\eqref{MultistageProblem} is
equal to the value function $V_0$ at point $x_0$, that is
$V_0 \left( x_0 \right)$, whereas solving the sequence of
sub-problems given by Equation~\eqref{DynamicProgramming} means to compute the value functions
$V_t$ at each point $x\in\X$ and time $t\in \ce{0,T{-}1}$.

We will state several assumptions on these operators in Section~\ref{sec:lAlgorithme}
under which we will devise an algorithm to solve the system of Bellman Equation~\eqref{DynamicProgramming},
also called the Dynamic Programming formulation of
the multistage problem. Let us stress on the fact that although we want to solve
the multistage Problem~\eqref{MultistageProblem}, we will mostly focus on its
(equivalent) Dynamic Programming formulation given by Equation~\eqref{DynamicProgramming}.

One issue of using Dynamic Programming to solve multistage optimization problems
is the so-called \emph{curse of dimensionality}~\cite{Be1954}. That is, when the
state space $\X$ is a vector space, grid-based methods to compute the value
functions have a complexity which is exponential in the dimension of the state
space $\X$. One popular algorithm
(see~\cite{Gi.Le.Ph2015,Gu2014,Gu.Ro2012,Pe.Pi1991,Sh2011,Zo.Ah.Su2018}) that
aims to dampen the curse of dimensionality is the Stochastic Dual Dynamic
Programming algorithm (or SDDP for short) introduced by Pereira and Pinto in
1991. Assuming that the cost functions $c_t$ are convex and the dynamics $f_t$
are linear, the value functions defined in the Dynamic Programming
formulation~\eqref{DynamicProgramming} are convex~\cite{Gi.Le.Ph2015}.  Under
these assumptions, the SDDP algorithm aims to build lower (or outer)
approximations of the value functions as suprema of affine functions and thus,
does not rely on a discretization of the state space. In the aforementioned
references, this approach is used to solve stochastic multistage convex
optimization problems, however in this article we will restrict our study to
deterministic multistage convex optimization problems as formulated in
Problem~\eqref{MultistageProblem}. Still, the SDDP algorithm can be applied to
our framework. One of the main drawback of the SDDP algorithm (in the stochastic
case) is the lack of an efficient stopping criterion: it builds lower
approximations of the value functions but upper (or inner) approximations are
built through a Monte-Carlo scheme that is costly and the associated stopping
criteria is not deterministic. We follow another path to provide upper
approximations as explained now.

In~\cite[Ch.\ 8]{Qu2013} and~\cite{Qu2014}, Qu devised an algorithm which builds
upper approximations of a Bellman value function arising in an infinite horizon
and continuous time framework where the set of controls is both discrete and
continuous. This work was inspired by the work of McEneaney~\cite{Mc2007} using
techniques coming from tropical algebra, also called max-plus or min-plus
techniques. Assume that $\X=\R^n$ and that for each fixed discrete control the
cost functions are convex quadratic and the dynamics are linear in both the
state and the continuous control. If the set of discrete controls is finite,
then exploiting the min-plus linearity of the Bellman operators $\B_t$, one can
show that the value functions can be computed as a finite pointwise infimum of
convex quadratic functions:
\begin{equation*}
 V_t = \inf_{\func_t \in \Func_t} \func_t\eqfinv
\end{equation*}
where $\Func_t$ is a finite set of convex quadratic forms. Moreover, in this
framework, the elements of $\Func_t$ can be explicitly computed through the
Discrete Algebraic Riccati Equation (DARE~\cite{La.Ro1995}). Thus, an
approximation scheme that computes an increasing sequence of subsets
$\left(\Func_t^k\right)_{k\in \N}$ of $\Func_t$ yields an algorithm that
converges after a finite number of improvements
\begin{equation*}
 V_t^k := \inf_{\func_t \in \Func_t^k} \func_t \approx \inf_{\func_t \in \Func_t} \func_t = V_t.
\end{equation*}
However, the size of the set of functions $\Func_t$ that need to be computed is
growing exponentially with $T-t$. In~\cite{Qu2014}, in order to address the
exponential growth of $\Func_t$, Qu introduced a probabilistic scheme that adds
to $\Func_t^k$ the ``best'' (given the current approximations) element of
$\Func_t$ at some point drawn on the unit sphere.

Our work aims to build a general algorithm that encompasses both a deterministic
version of the SDDP algorithm and an adaptation of Qu's work to a discrete time
and finite horizon framework.

The remainder of this paper is structured as follows. In
Section~\ref{sec:lAlgorithme}, we make several assumptions on the Bellman
operators $\B_t$ and define an algorithm which builds approximations of the
value functions as a pointwise optimum (\emph{i.e.} either a pointwise infimum
or a pointwise supremum) of basic functions in order to solve the associated
Dynamic Programming formulation~\eqref{DynamicProgramming} of the multistage
Problem~\eqref{MultistageProblem}.  At each iteration, the so-called basic
function that is added to the current approximation will have to satisfy two key
properties at a randomly drawn point, namely, \emph{tightness} and \emph{validity}. A key
feature of the proposed algorithm is that it can yield either upper or lower
approximations. More precisely,

\noindent $\bullet$ if the basic functions are affine, then approximating the
value functions by a pointwise supremum of affine functions will yield the SDDP
algorithm;

\noindent $\bullet$ if the basic functions are quadratic convex, then
approximating the value functions by a pointwise infimum of convex quadratic
functions will yield an adaptation of Qu's min-plus algorithm.

In Section~\ref{sec:Convergence}, we study the convergence of the approximations
of the value functions generated by our algorithm at a given time
$t\in \ce{0,T}$. We use an additional assumption on the random points on which
current approximations are improved, which state that they need to cover a ``rich enough set''
and show that the approximating sequence converges almost surely (over the
draws) to the Bellman value function on a set of interest.

In the last sections, we will specify our algorithm to three special cases. In
Section~\ref{SDDP_Example}, we prove that when building lower approximations as
a supremum of affine cuts, the condition on the draws is satisfied on the
optimal current trajectory, as done in SDDP. Thus, we get another point of view
on the usual (see~\cite{Gi.Le.Ph2015,Sh2011}) asymptotic convergence of SDDP,
in the deterministic case.  In Section~\ref{sec:Exemples_switch}, we describe an
algorithm which builds upper approximations as an infimum of quadratic forms. It
will be a step toward addressing the issue of computing efficient upper
approximations for the SDDP algorithm.  In Section~\ref{sec:ToyExample}, we
present on a toy example some numerical experiments where we simultaneously
compute lower approximations of the value functions by a deterministic version
of SDDP of the value functions and upper approximations of the value functions
by a discrete time version of Qu's min-plus algorithm.

\begin{figure}
 \centering
 \includegraphics[scale=0.7]{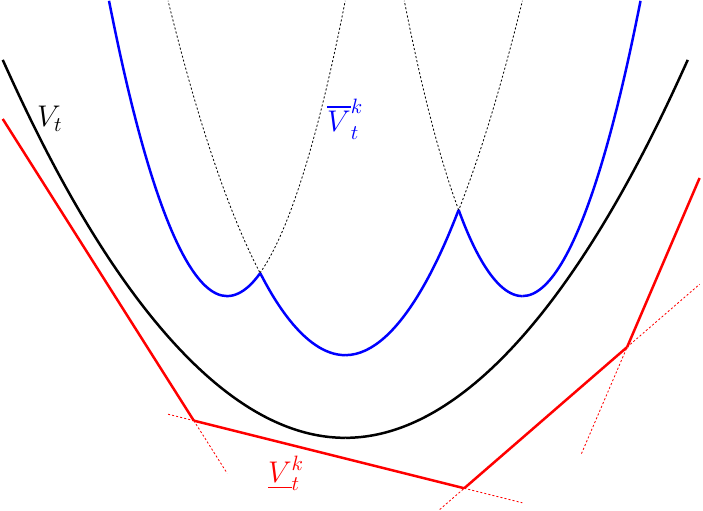}
 \caption{\label{fig:UpperLower}The lower approximations $\underline{V}_t^k$  will be built as a
  supremum of basic functions (here affine functions) that will always be
  below $V_t$. Likewise, the upper approximations $\overline{V}_t^k$
  will be built as an infimum of some other basic functions (here quadratic
  functions) that will always be above $V_t$.}
\end{figure}

\section{Notations and definitions}
\label{sec:lAlgorithme}

In the sequel, we will use the following notations

\noindent $\bullet$ $\X := \R^n$, endowed with its euclidean structure and its Borel $\sigma$-algebra
denotes the \emph{set of states}.

\noindent $\bullet$ $T$, a finite integer that we will call the time \emph{horizon}.

\noindent $\bullet$ $\optop$, denotes a generic operation that is either the \emph{pointwise infimum} or
the \emph{pointwise supremum} of functions which we will call the \emph{pointwise optimum}.

\noindent $\bullet$ $\overline{\R}$, denotes the extended real line endowed with the operations
$+\infty + \np{-\infty} = -\infty + \infty = +\infty$.

\noindent $\bullet$ $\dom{\phi}$, denotes the \emph{domain} of $\phi \in \left(\overline{\R}\right)^\X$ defined as
the subset of $\X$ in which $\phi(x)\in \R$.

\noindent $\bullet$ $\Funcb_t$ and $\Funcbb_t$, denote for every $t\in \ce{0,T}$, two subsets of the set $\left(\overline{\R}\right)^\X$
such that $\Funcb_t \subset \Funcbb_t$.

\noindent $\bullet$ $\func$ is said to be a \emph{basic function} if it is an element of $\Funcb_t$ for some $t\in \ce{0,T}$.

\noindent $\bullet$ $\indi{X}$ denotes,  for every set $X \subset \X$, the function equal to
$0$ on $X$ and $+ \infty$ elsewhere.

\noindent $\bullet$ For every $t\in \ce{0,T}$ and every set of basic functions $\Func_t \subset \Funcb_t$, we denote by $\vopt_{\Func_t}$ its pointwise optimum,
$\vopt_{\Func_t} := \optop_{\func \in \Func_t} \func$, that is
\begin{equation}
 \label{VoptF}
 \begin{array}{ccll}
  \vopt_{\Func_t} : & \X & \longrightarrow & \overline{\R}                                            \\
                    & x  & \longmapsto     & \optop \left\{\func (x) \mid \func \in \Func_t \right\}.
 \end{array}
\end{equation}

\noindent $\bullet$ $\left(\B_t\right)_{t\in \ce{0,T-1}}$ denotes a sequence of $T$ operators from $\overline{\R}^\X$ to $\overline{\R}^\X$,
called the \emph{Bellman operators}.

\noindent $\bullet$ $\left( V_t \right)_{t\in \ce{0,T}}$, denotes, for a fixed function $\psi : \X \to \overline{\R}$,
a sequence of \emph{value functions} given by the system of Bellman Equations~\eqref{DynamicProgramming}.

Now, we make several assumptions on the structure of
Problem~\eqref{DynamicProgramming}. These assumptions will be satisfied in the
examples developed in Sections~\ref{SDDP_Example} to~\ref{sec:ToyExample}.
These assumptions will make it possible to propagate
backward in time, regularity of the value function at the final time $t=T$ to
the value function at the initial time $t=0$.

\begin{assumption}[Structural assumptions]
 \label{Assumptions} \
 \begin{myenumerate}
  \item\label{Stability-pointwise-optimum} \textbf{Stability by pointwise optimum:}
  for every $t\in \ce{0,T}$, if $\Func_t \subset \Funcb_t$ then
  $\vopt_{\Func_t} \in \Funcbb_t$.
  \item\label{Stability-pointwise-convergence} \textbf{Stability by pointwise convergence:} for every $t\in \ce{0,T}$ if a sequence of functions $\np{\func^k}_{k\in \N}\subset \Funcbb_t$ converges pointwise to $\func$ on the domain of $V_t$, then $\func \in \Funcbb_t$.
  \item \label{CommonRegularity} \textbf{Common regularity:} for every $t\in \ce{0,T}$, there exists a common (local) modulus of continuity of all $\func\in\Funcbb_t$, \emph{i.e.} for every $x\in \dom\np{V_t}$, there exist $\omega_{t,x} : \R_+ \to \R_+ \cup \{+\infty \}$ which is increasing, equal to $0$ in $0$, continuous at $0$ and such that for every $\func \in \Funcbb_t$ and for every $x' \in \dom\np{V_t}$, we have that $\lvert \func(x) - \func (x') \rvert \leq \omega_{t,x} \np{\lVert x - x' \rVert}$.
  \item\label{Final-condition} \textbf{Final condition:} the value function $V_T$ at time $T$ is a pointwise optimum
  for some given subset $\Func_T$ of~$\Funcb_T$, that is $\psi := \vopt_{\Func_T}$.
  \item \label{StabilityBellman}
  \textbf{Stability by the Bellman operators:} for every $t\in \ce{0,T-1}$, if $\func \in \Funcbb_{t+1}$,
  then $\B_{t}\left( \func \right)$ belongs to $\Funcbb_{t}$.
  %
  %
  %
  \item\label{order-preserving} \textbf{Order preserving operators:} for every $t\in \ce{0,T-1}$, the operators $\B_t$ are \emph{order preserving}, \emph{i.e.} if $\phi,
   \varphi \in \Funcbb_{t+1}$ are such that $\phi \leq \varphi$, then $\B_t
   \left(\phi\right) \leq \B_t\left(\varphi\right)$.
  \item\label{Additively-subhomogeneous} \textbf{Additively subhomogeneous operators: } for every time step $t\in \ce{0,T-1}$, and every given compact set $K_t$, there exists $M_t> 0$ such that the operator $\B_t$ restricted to $K_t$ is \emph{additively subhomogeneous} over $\Funcbb_{t+1}$, meaning that for every constant function $\lambda\geq 0$ and  every function $\func \in \Funcbb_{t+1}$, we have
  \[
   \B_t\left( \func + \lambda \right) + \indi{K_t} \leq \B_t\left(\func\right) + \lambda M_t + \indi{K_t}.
  \]
  \item\label{proper-value} \textbf{Proper value functions:} the solution $\left(V_t\right)_{t\in \ce{0,T}}$ to the Bellman equations~\eqref{DynamicProgramming} never takes the value $-\infty$ and  is not identically equal to $+\infty$.
  \item\label{optimal-sets} \textbf{Compactness condition:} for every $t\in \ce{0,T-1}$ and every compact
  set $K_t \subset \dom \np{ V_t}$,
  there exists a compact set
  $K_{t+1} \subset \dom \np{ V_{t+1} }$ such that,
  for every function
  $\func \in \Funcbb_{t+1}$ and constant $\lambda \geq 0$,  we have
  \[
   \B_t\left( \func + \lambda + \indi{K_{t+1}} \right) \leq \B_t \left( \func + \lambda \right)
   + \indi{K_t}.
  \]
  %
 \end{myenumerate}
\end{assumption}

\begin{remark}
 Assumption~\ref{Assumptions}-\eqref{CommonRegularity} ensures that the domain of each function
 of $\Funcbb_t$ includes the domain of $V_t$. Note that if $\Funcbb_t$ is the
 set of all functions satisfying Assumption~\ref{Assumptions}-\eqref{CommonRegularity}, then
 Assumption~\ref{Assumptions}-\eqref{Stability-pointwise-convergence} is trivially
 satisfied. Also note that the domain of $V_t$ is known as in~\cite{Gi.Le.Ph2015}.
\end{remark}
\begin{remark}
 Note that Assumption~\ref{Assumptions}-\eqref{proper-value} and~\ref{Assumptions}-\eqref{optimal-sets} do not
 change whether $\optop = \inf$ or $\optop = \sup$ as the optimal control problem
 that we consider is formulated as a minimization problem.
\end{remark}

\begin{lemma}\label{ContinuiteVt}
 For every $t\in \ce{0,T}$ we have that $V_t \in \Funcbb_t$.
\end{lemma}

\begin{proof}
 By Assumption~\ref{Assumptions}-\eqref{Final-condition} and
 Assumption~\ref{Assumptions}-\eqref{Stability-pointwise-optimum}, $V_T$ is in
 $\Funcbb_T$. Now, assume that for some $t\in \ce{0,T-1}$ we have that
 $V_{t+1} \in \Funcbb_{t+1}$. By Assumption~\ref{Assumptions}-\eqref{StabilityBellman},
 we have that $V_t = \B_t\np{V_{t+1}} \in \Funcbb_t$ which ends the proof by
 backward induction.
\end{proof}

From a set of basic functions $\Func_t \subset \Funcb_t$, one can build its
pointwise optimum $\vopt_{\Func_t} = \optop_{\phi \in \Func_t} \phi$. We build a
monotone sequence of approximations of the value functions as optima of basic
functions which will be computed through \emph{compatible selection functions}
as defined below.  We illustrate this definition in Figure~\ref{fig:TightValid}.

\begin{definition}[Compatible selection function]
 \label{det:CompatibleSelection} Let a time step $t \in \ce{0,T-1}$ be fixed.
 A \emph{compatible selection function}, or simply \emph{selection function}, is a function $\Selection{}{}$ from
 $2^{\Funcb_{t+1}} \times \X$ to $\Funcb_t$ satisfying the two following properties

 \noindent -- \textbf{Validity}: for every set of basic functions $\Func_{t+1} \subset \Funcb_{t+1}$ and every $x\in \X$, we
 have $\Selection{\Func_{t+1}, x}{} \leq \B_t\left(\vopt_{\Func_{t+1}}\right)$
 (resp. $\Selection{\Func_{t+1}, x}{} \geq \B_t\left(\vopt_{\Func_{t+1}}\right)$) when
 $\optop = \sup$ (resp. $\optop = \inf$).

 \noindent -- \textbf{Tightness}: for every set of basic functions $\Func_{t+1} \subset \Funcb_{t+1}$ and every $x\in \X$
 the functions $\Selection{\Func_{t+1}, x}{}$ and $\B_t\left( \vopt_{\Func_{t+1}}
  \right)$ coincide at point $x$, that is
 \(
 \Selection{\Func_{t+1}, x}{x} = \B_t\left( \vopt_{\Func_{t+1}} \right)\np{x}.
 \)

 For $t= T$, we say that $\mathcal{S}_T : 2^{\Funcb_T} \times \X \to
  \Funcb_T$ is a \emph{compatible selection function} if
 it is \emph{valid} and \emph{tight}.
 There, $\mathcal{S}_T$ is \emph{valid} if, for every $\Func_T \subset \Funcb_T$ and $x\in \X$, the function
 $\mathcal{S}_T\left( \Func_T, x \right)$ remains below (resp. above) the value
 function at time $T$ when $\optop = \sup$ (resp.  $\optop = \inf$).  Moreover,
 the function $\mathcal{S}_T$ is \emph{tight} if it coincides with the value function at point $x$,
 that is for every $\Func _T\subset \Funcb_T$ and $x\in \X$, we have
 \( \mathcal{S}_T\left[ \Func_T, x \right]\left(x\right) = V_T(x).
 \)
\end{definition}

\begin{figure}
 \centering
 \includegraphics[scale=0.7]{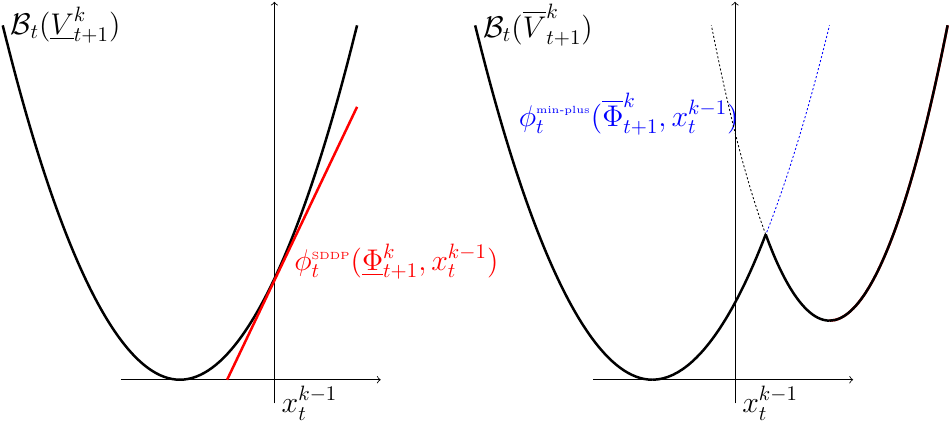}
 \caption{\label{fig:TightValid}In Sections~\ref{SDDP_Example} and~\ref{sec:Exemples_switch}, we will specify two selection
 functions, $\func_t^{\SDDP}$ and $\func_t^{\MinPlus}$, respectively, that
 will respectively yield upper and lower approximations of the value
 functions. In both cases, the selection function computes a basic function (in red or blue) which is
 equal, at the point $x_t^{k-1}$, to the image by the Bellman operator of the current
 approximation (in black), that is the tightness assumption. Moreover it remains
 above (or below) the image by the Bellman operator of the current approximation, that is the validity assumption.}
\end{figure}

Note that the Tightness assumption only asks for equality at the point $x$
between the functions $\func_t^\sharp\left(\Func_{t+1},x\right)$ and
$\B_t\left(\vopt_{\Func_{t+1}}\right)$ and not necessarily everywhere. The only
global property between the functions $\func_t^\sharp\left(\Func_{t+1},x\right)$
and $\B_t\left(\vopt_{\Func_{t+1}}\right)$ is an inequality given by the
validity assumption.

In Algorithm~\ref{\NomAlgo} we will generate, for every time $t$, a sequence of
random points of crucial importance that we will call \emph{trial points}. They
will be the ones where the selection functions will be evaluated, given the set
$\Func_t^k$ which characterizes the current approximation. In order to generate
those points, we will assume that we have at our disposition an Oracle which,
given $T{+}1$ sets of functions (characterizing the current approximations),
computes $T{+}1$ compact sets and a probability law.

\begin{definition}[Oracle]
 \label{AssumptionOracle}
 The Oracle takes as input $T{+}1$ sets of functions $F = \np{F_0,\ldots, F_T}$
 included in $\Funcb_0,\ldots, \Funcb_T$ respectively. Its output consists of
 $T{+}1$ compact sets $K_0, \ldots, K_T$, each included in $\X$, and of a
 probability measure ${\Pro}_F$ on the space
 $\X^{T+1}$ 
 which are such that

 \noindent -- Initialization. For every $t\in \ce{0,T}$, set $\Func_t = \emptyset$ and return $T{+}1$ given compact sets and a given probability measure.

 \noindent -- For every $t\in \ce{0,T}$, $K_t \subset \dom\left(V_t\right)$.

 \noindent -- The support of ${\Pro}_F$ is $K_0 \times \ldots \times K_T$.
\end{definition}

For every time $t\in \ce{0,T}$, we construct a sequence of functions
$\left(  V_t^{k} \right)_{k\in \N}$ belonging to $\Funcbb_t$ as
follows.  For every time $t\in \ce{0,T}$ and for every $k \geq 0$, we build a subset $\Func_t^{k}$ of the set $\Funcb_t$ and define the sequence of functions by pointwise optimum
\begin{equation}
 \label{Vktdef}
 V_t^{k} := \vopt_{\Func_t^k} = \optop_{\func  \in \Func_t^{k}} \func\eqfinp
\end{equation}
As described here, the functions are just byproducts of Algorithm~\ref{\NomAlgo}, which only
describes the way the sets $\Func_t^{k}$ are computed.

As the following algorithm was inspired by Qu's work which uses tropical algebra techniques, we will call this algorithm ``Tropical Dynamic Programming''.

\begin{algorithm}
 \caption{\NomAlgo \ (\nomalgo)}
 \label{\NomAlgo}
 \begin{algorithmic} \REQUIRE{For every $t\in \ce{0,T}$, $\Selection{}{}$ a compatible selection function and a $\Oracle$ satisfying Definition~\ref{AssumptionOracle}.}
  \ENSURE{For every $t\in \ce{0,T}$, a sequence of sets
   $\left(\Func_t^k\right)_{k\in\N}$ and the associated sequence $V_t^k = \optop_{\func \in \Func_t^k} \func$.}
  \STATE{Define for every $t\in \ce{0,T}$,
   $\Func_t^0 := \emptyset$.}
  \FOR{$k\geq 0$}
  \STATE{\emph{Forward phase}}
  \STATE{Compute $\left(K_0^k, \dots K_T^k, \Pro^{k}\right) = \text{Oracle}\left(\Func^k_0, \dots, \Func^k_T \right).$}
  \STATE{Draw trial points $\left(x_t^{k}\right)_{t\in \ce{0,T}}$ over $K_0^{k} \times K_1^{k} \times \ldots \times K_T^{k}$ according to $\Pro^{k}$ knowing the past iterations.}
  \STATE{\emph{Backward phase}}
  \STATE{Compute $\func_T^{k+1} := \mathcal{S}_T \left[\Func^{k}_{T}, x_T^{k}\right]$.}
  \STATE{Define $\Func_T^{k+1} := \Func_T^{k} \cup \left\{ \func_T^{k+1} \right\}$.}
  \FOR{$t$ from $T-1$ to $0$}
  \STATE{Compute $\func_t^{k+1} := \Selection{\Func^{k+1}_{t+1}, x_t^{k}}{}$.}
  \STATE{Define $\Func_t^{k+1} := \Func_t^{k} \cup \left\{ \func_t^{k+1} \right\}$.}
  \ENDFOR
  \ENDFOR
 \end{algorithmic}
\end{algorithm}

At each iteration, Algorithm~\ref{\NomAlgo} generates a trial point $x_t^k$ which only depends on the data available at the current iteration. We loosely explain this point. Define for every $k\in \N$,
$F^k = \np{F_t^k}_{t\in \ce{0,T}}$ and $x^k = \np{x_t^k}_{t\in \ce{0,T}}$. Then, there exists a deterministic function $\xi$ and a sequence of independent random variables $\np{W^k}_{k\in \N}$ such that for every $k\in \N$, $x^k = \xi\np{F^k, W^{k}}$ where $\np{W^{k}}_{k \in \N}$ is furthermore independent from $F^0$. Throughout the remainder of the article, denote by $\np{\Omega,\mathcal{F}, \Pro}$ a probability space on which the random variables $\np{W^k}_{k\in \N}$ are defined and independent.


We will denote by $+$ the Minkowski sum between sets, by $\mathbb{B}$ the unit
closed euclidean ball of $\X^{T+1}$ and for every $x \in \X^{T+1}$ and radius
$r>0$, $B(x,r)$ is the euclidean open ball of radius $r$ centered at
$x$. Furthermore, we define for every $t$, $K_t^* := \limsup_{k\in \N} K_t^k$
the set of all possible limit points of $K_t^k$.  We make the following
assumption on the Oracle which, loosely stated, ensures that if a state $x_t$ is
close to $K_t^*$, then $x_t$ is almost a limit point of the sequence of trial
points $\np{x_t^k}_{k\in \N}$.

\begin{assumption}[Trial point assumption]
 \label{TrialpointAssumption}
 For every radius $r' > 0$, there exists $r >0$ such that
 \begin{equation}
  \label{eq:TrialpointAssumption}
  \forall x \in \X
  \eqsepv
  \Pro
  \Bc{\bp{x \in \limsup_{k\in \N}\footnotemark K^k  + r\mathbb{B}} \Rightarrow x \in \limsup_{k\in \N} B(x^k,r') } = 1
  \eqfinp
 \end{equation}
\end{assumption}
\footnotetext{See~\cite[Definition 4.1 p. 109]{Ro.We2009}.}
Remark that $\np{\limsup_{k\in \N} K^k} + r\mathbb{B} =  \limsup_{k\in \N} \np{K^k + r\mathbb{B}}$, hence the lack of parenthesis. The following lemma gives some more insight on the Trial point assumption.

\begin{lemma}
 \label{lemmatrial}
 Consider the sequence of trial points $\np{x^k}_{k\in \N}$ generated by Algorithm~\ref{\NomAlgo} with an Oracle satisfying
 Assumption~\ref{TrialpointAssumption}. Given $r' > 0$ and $x\in \X$, for every $r'' > r'$, $\Pro$-a.s.,
 \begin{equation}
  \label{eq:lemmatrial1}
  x \in \limsup_{k\in \N} B(x^k, r') \Rightarrow x^k \in B(x,r'') \ \text{for infinitely many indices} \ k\in \N
  \eqfinp
 \end{equation}
 Conversely, given $r'' >0$ and $x\in \X$, for every $r' > r''$, $\Pro$-a.s.
 \begin{equation}
  \label{eq:lemmatrial2}
  x^k \in B(x,r'') \ \text{for infinitely many indices} \ k\in \N \Rightarrow x \in \limsup_{k\in \N} B(x^k, r')
  \eqfinp
 \end{equation}
\end{lemma}

\begin{proof}
 First, we prove Equation~\eqref{eq:lemmatrial1}. Fix $r''>r'>0$ and assume that $x \in \limsup_{k\in \N} B(x^k, r')$, $\Pro$-a.s..
 Then, there  exists an increasing function $\sigma : \N \to \N$ and a sequence
 $\np{y^{\sigma(k)}}_{k\in \N} \subset r'\mathbb{B}$ such that
 $x^{\sigma(k)} + y^{\sigma(k)} \underset{k\to +\infty}{\longrightarrow} x$. As
 $r'' - r' > 0$, there exists a rank $k_0\in \N$ such that when $k \geq k_0$ we have
 \(
 \lVert x - x^{\sigma(k)} + y^{\sigma(k)} \rVert \leq r'' - r'
 \). By triangle inequality, we have
 \[
  \lVert x - x^{\sigma(k)}\rVert \leq \lVert x - x^{\sigma(k)} + y^{\sigma(k)}\rVert + \lVert -y^{\sigma(k)} \rVert\\
  \leq (r''-r') + r' = r'',
 \] \emph{i.e.} $\Pro$-a.s., for every $k \geq k_0$, $x \in B(x^k, r'')$, which yields
 Equation~\eqref{eq:lemmatrial1}.

 Second, we prove Equation~\eqref{eq:lemmatrial2}. Fix $r'> r''> 0$ and assume that
 $x^k \in B(x,r'')$ for infinitely many indices $k\in \N$. Thus,
 $\Pro$-a.s, there exists an increasing function $\sigma : \N \to \N$ and a
 sequence $\np{y^{\sigma(k)}}_{k\in \N} \subset r''\mathbb{B}$ such that
 $x^{\sigma(k)} - x = y^{\sigma(k)}$. As $r' > r''$, $\Pro$-a.s.
 $x \in B(x^k, r')$ and
 \(
 x = x^{\sigma(k)} - y^{\sigma(k)}.
 \)
 Hence, we obtain Equation~\eqref{eq:lemmatrial2}.
\end{proof}

Now, we give two examples of Oracles that satisfy the Trial point assumption~\ref{TrialpointAssumption}. They are used respectively in Section~\ref{SDDP_Example} and~\ref{sec:Exemples_switch}.

\begin{example}[Independant uniform draws over the unit sphere]
 \label{example:sphere}
 Consider the Oracle which constantly outputs $T+1$ times the unit euclidean
 sphere $\Sphere$ of $\X$ and the uniform probability measure
 $\Pro^k := \sigma_U$ of $\Sphere^{T+1}$ on $\X^{T+1}$\footnote{For every
  $A \in \mathcal{B}\np{\X^{T+1}}$,
  $\sigma_U(A) = C \, \mathrm{Leb}\np{\pi_{\Sphere^{T+1}}^{-1}\np{A \cap
     \Sphere^{T+1}}}$, where $\mathrm{Leb}$ is the Lebesgue measure on
  $\X^{T+1}$, $\pi_{\Sphere^{T+1}}$ is the euclidean projector on $\Sphere^{T+1}$
  restricted to the ball $B\np{0,1}^{T+1}$ without $0$ and $C$ a normalization constant.}. Here, we have for every
 $k\in \N$, $K^k = \Sphere^{T+1}$. Fix an arbitrary $r' > 0$
 and set $r= r'/2$, we prove that
 \begin{equation*}
  \forall x \in \X, {\Pro}\Bc{   x \in \np{ \Sphere^{T+1}  + r\mathbb{B}} \Rightarrow x \in \limsup_{k\in \N} B(x^k,r') } = 1 \; .
 \end{equation*}
 \begin{proof}
  Fix $x\in \Sphere^{T+1} + r\mathbb{B}$, we need to show that we have
  \(
  \Pro\bc{x \in \limsup_{k\in \N} B(x^k,r') } = 1
  \). Now, fix $r''>0$ such that $r < r'' < r'$. Using Lemma~\ref{lemmatrial}-\eqref{eq:lemmatrial2}, it is enough to show that
  \begin{equation}
   \label{eq:oracle1}
   \Pro\nc{ x^k \in B(x,r'') \ \text{for infinitely many indices} \ k\in \N} = 1.
  \end{equation}
  As $r'' > r$ and $x$ is distant from $\Sphere^{T+1}$ by less than $r$, the
  quantity
  $\Pro\nc{x^k \in B(x,r'')} = \sigma_U \nc{B(x,r'') \cap \Sphere^{T+1}}$ is a
  positive constant in $k$. Thus, we have that
  \(
  \sum_{k\in \N} \Pro\bc{x^k \in B(x,r'')} = +\infty
  \).
  Moreover, the sequence of events $\np{x^k \in B(x,r)}_{k\in \N}$ are independent,
  thus by Borel-Cantelli's Lemma, Equation~\eqref{eq:oracle1} holds.
 \end{proof}
\end{example}

\begin{example}[Dirac on the current optimal trajectory]
 \label{example:opt_traj}
 The sequence of Probability measures $\np{{\Pro}^k}_{k\in \N}$ is recursively build as follows:

 \noindent -- Set ${\Pro}^0 := \np{\delta_{x_0^0}, \ldots \delta_{x_T^0}}$ where $x_t^0 \in K_t^0$ for every $t\in \ce{0,T}$.

 \noindent -- Given sets of functions $F_0^k, \ldots, F_T^k$. Start, by fixing $x_0^{k} = x_0$ and
 compute forward in time, for $t\in \ce{0,T{-}1}$, optimal controls by
 \(
 u_t^k \in \argmin_u \B_t^u\np{\vopt_{F_{t+1}^k}}(x_t^k),
 \)
 and successive states by
 \(
 x_{t+1}^k = f_t\np{x_t^k, u_t^k}
 \).

 \noindent -- Define a probability measures ${\Pro}^k := \np{\delta_{x^k_0}, \ldots, \delta_{x^k_T}}$.

 Consider the Oracle which, given sets of functions $F_0^k, \ldots, F_T^k$,
 outputs the singleton $\na{x^k} = \ba{\np{x^k_t}_{t\in \ce{0,T}}}$ and
 the probability measure
 ${\Pro}^k := \np{\delta_{x^k}}$  defined at previous step. Fix $r > 0$, take $r' = r >0$ and
 $x \in \X^{T+1}$. We obtain that
 \begin{equation*}
  {\Pro}\Bc{   x \in \bp{\limsup_{k\in \N}  \underbrace{\{x^k\}  + r\mathbb{B}}_{= B(x^k, r)} }^c \ \text{or} \ x \in \limsup_{k\in \N} B(x^k,r) } = 1 \; ,
 \end{equation*}
 which is equivalent to the Trial point assumption with $K^k = \{x^k\}$.
\end{example}

\section{Almost sure convergence on the set of accumulation points}

\label{sec:Convergence}

In this section, we will prove the convergence result stated in
Theorem~\ref{ConvergenceTheorem}. For this purpose, we state several crucial properties
of the approximation functions $\left(V_t^k\right)_{k\in \N}$ generated by
Algorithm~\ref{\NomAlgo}. They are direct consequences of the facts that the Bellman
operators are order preserving and that the basic functions building our
approximations are computed through compatible selection
functions. The Algorithm~\ref{\NomAlgo} is stochastic, as trial points are drawn at each
iteration from $\Pro^k$. Therefore, equalities, inequalities and statements
where the functions $V_t^k$ are involved hold $\Pro$-almost surely. However, for
the sake of simplicity, we will refrain from always adding $\Pro$-almost surely
in equalities, inequalities and some statements.

\begin{lemma}
 \label{ProprietesFaciles}
 The sequence of functions $\left(  V_t^{k} \right)_{k\in \N}$,
 for every $t\in \ce{0,T}$, given by Equation~\eqref{Vktdef} and
 produced by Algorithm~\ref{\NomAlgo} satisfy the following properties.
 \begin{enumerate}
  \item\label{proofitema} \textbf{Monotone approximations:} for every indices
        $k< k'$ and every $t\in \ce{0,T}$, we have that
        $V_t^k \geq V_t^{k'} \geq V_t$
        when $\optop = \inf$ and $V_t^k \leq V_t^{k'} \leq V_t$ when $\optop = \sup$.
  \item\label{proofitemb} For every $k\in \N$ and every $t\in \ce{0,T-1}$, we have that
        $
         \B_t\left( V_{t+1}^k\right) \leq V_t^k$ when $\optop = \inf$
        and $\B_t\left( V_{t+1}^k\right) \geq
         V_t^k$ when $\optop = \sup$.
  \item\label{proofitemc}
        For every $k\geq 1$ and every $t\in \ce{0,T-1}$, we have
        \(
        \B_t\left( V_{t+1}^k \right)\left( x_t^{k-1} \right) =
        V_t^k \left(x_t^{k-1} \right)
        \).
  \item\label{proofitemd} For every $k\geq 1$, we have
        \(   V_{T}^k\left( x_T^{k-1} \right) = V_T \left(x_T^{k-1} \right) \).
 \end{enumerate}
\end{lemma}

\begin{proof} We prove each point when $\optop = \inf$. The case $\optop = \sup$ is similar and left to the reader.

 \noindent $\bullet$~\eqref{proofitema} (left inequality).
 Let $t\in \ce{0,T}$ be fixed. 
 By construction of Algorithm~\ref{\NomAlgo}, the sequence of sets
 $\bp{\Func_t^k}_{k\in \N}$ is non-decreasing. Now, using the definition of the sequence $\left(V_t^k\right)_{k\in \N}$
 given by Equation~\eqref{Vktdef} we have that
 \( V_t^{k+1} = \vopt_{\Func^{k+1}_t}(x) = \inf_{\func\in \Func^{k+1}_t}\func(x) \leq \inf_{\func
  \in \Func^{k}_t} \func(x) = \vopt_{\Func^{k}_t}(x) = V_t^k
 \) and thus the sequence $\left(V_t^k \right)_{k\in \N}$ is non-increasing.

 \noindent $\bullet$~\eqref{proofitemb}. We prove the assertion by induction on
 $k\in \N$.  For $k=0$, as $\Func_t^0 = \emptyset$, we have $V_t^0 = +\infty$
 for all $t\in \ce{0,T-1}$ and thus the assertion is true.  Now, assume that for
 some $k\in \N$, we have for all $t\in \ce{0,T-1}$
 \begin{equation}
  \label{eq:HR}
  \B_t\left( V_{t+1}^{k} \right) \leq V_t^k \eqfinp
 \end{equation}
 Since $\bp{V_{t+1}^{k'}}_{k'\in \N}$ is non-increasing by already proved Item~\eqref{proofitema} and
 $\B_t$ is order preserving using Assumption~\ref{Assumptions}-\eqref{order-preserving}, we have that
 $\B_t\left( V_{t+1}^{k+1} \right)\leq \B_t\np{V_{t+1}^{k}}$. This last inequality combined with induction assumption
 given by Equation~\eqref{eq:HR} gives the inequality
 \begin{equation}
  \label{eq:HRbis} \B_t\left( V_{t+1}^{k+1} \right) \leq V_t^k\eqfinp
 \end{equation}
 Moreover, we also have that
 \begin{align}
  \B_t\left( V_{t+1}^{k+1} \right)
   & \mathop{=}_{(\text{by }~\eqref{Vktdef})} \B_t\left( \vopt_{F_{t+1}^{k+1}}\right)
  \mathop{\le}_{(\text{by } \mathcal{S}_t \text{ validity at } x_t^k)}
  \Selection{\Func_{t+1}^{k+1}, x^k_t}{}
  = \func^{k+1}_t \eqfinv
  \label{eq:HRter}
 \end{align}
 where the last equality is obtained by definition of function $\func^{k+1}_t$ in Algorithm~\ref{\NomAlgo}.
 Thus, combining Equation~\eqref{eq:HRbis} and~\eqref{eq:HRter} we have that
 $ \B_t\np{V_{t+1}^{k+1}} \le \inf \bp{V_t^{k} , \func_t^{k+1}}$.
 Finally, using Equation~\eqref{Vktdef} and Algorithm~\ref{\NomAlgo}, we have that
 \begin{equation*}
  \inf \bp{V_t^{k} , \func_t^{k+1}} =
  \inf \Bp{\inf_{\func \in \Func_t^{k} } \func, \func_t^{k+1}} =
  \inf_{\func \in \Func_t^{k} \cup \{ \func_t^{k+1}\}} \func =
  \inf_{\func \in \Func_t^{k+1}} \func =
  V_t^{k+1}\eqfinp
 \end{equation*}
 Thus, we obtain that
 \(
 \B_t\left( V_{t+1}^{k+1} \right) \leq \inf \bp{V_t^{k} , \func_t^{k+1}}=  V_t^{k+1}
 \), which gives the induction assumption for $k+1$ and concludes the proof of~\eqref{proofitemb}.

 \noindent $\bullet$~\eqref{proofitemc}. As the selection function $\Selection{}{}$ is tight in the sense of
 Definition~\ref{det:CompatibleSelection}, we have by construction of Algorithm~\ref{\NomAlgo} that
 $\B_t \np{V_{t+1}^k}\np{x_t^{k-1}} = \func_t^k \np{x_t^{k-1}}$.
 Combining this equation with Item~\eqref{proofitemb} 
 and the definition of $V_t^k$, one gets Lemma~\ref{ProprietesFaciles}-\eqref{proofitemc}. 

 \noindent $\bullet$~\eqref{proofitemd}. Similarly,
 we have that
 $V_{T} \np{ x_T^{k-1}}= \func_T^k \np{x_T^{k-1}}$,
 which combined with the inequality given in Item~\eqref{proofitema} 
 and the definition of $V_T^k$ gives Lemma~\ref{ProprietesFaciles}-\eqref{proofitemd}. 

 \noindent $\bullet$~\eqref{proofitema} (right inequality).
 We prove that $V_t^k \geq V_t$ for all for all $k\in \N$ and all $t\in \ce{0,T}$.
 Fix $k \in \N$, we show that $V_t^k \geq V_t$ for all $t\in \ce{0,T}$ by backward recursion on time $t$.
 For $t=T$, by validity of the selection functions
 given in Definition~\ref{det:CompatibleSelection}, for every $\func \in \Func_T^k$, we have that $\func \geq
  V_T$. Thus $V_T^k = \vopt_{\Func_T^k} = \inf_{\func \in \Func_T^k} \func \geq V_T$.  Now, suppose
 that for some $t \in \ce{0,T-1}$, we have that $V_{t+1} \le V_{t+1}^k$. Then, using the definition of the value function
 in Equation~\eqref{DynamicProgramming}, the fact that the Bellman operators are order preserving and the inequality already proved
 in Item~\eqref{proofitemb} we obtain that:
 \(
 V_t = \B_t \bp{V_{t+1}} \le \B_t \bp{V_{t+1}^k} \le V_t^{k}\eqfinv
 \)
 which gives the assertion for time $t$.
 \medskip
 This ends the proof.
\end{proof}

In the following two propositions, we state that the sequences
$\left( V_t^k \right)_{k\in \N}$ and
$\np{\B_t \left( V_{t+1}^k \right)}_{k\in \N}$ converge uniformly on any compact
included in the domain of $V_t$. The limit function $V_t^*$ of
$\left( V_t^k \right)_{k\in \N}$ will be a natural candidate to be equal to the
value function $V_t$.
\begin{lemma}
 \label{lemma:unifconv}
 Fix $t\in \ce{0,T}$. Let $\np{\func^k}_{k\in \N}$ be a monotonic sequence in
 $\Funcbb_t$ such that there exists $\func_1, \func_2 \in \Funcbb_t$ satisfying
 for every $k\in \N$ \( \func_1 \leq \func^k \leq \func_2.  \) Then, the
 sequence $\np{\func^k}_{k\in \N}$ converges uniformly on every compact set
 included in $\dom\np{V_t}$ to a function $\func^* \in \Funcbb_t$.
\end{lemma}
\begin{proof}
 The proof relies on the Arzela-Ascoli~theorem~\cite[Theorem
  2.13.30~p.347]{Sc1995}. Since $\phi_1$ and $\phi_2$ belong to $\Funcbb_t$,
 they are finite on $\dom\np{V_t}$. Then, the sequence of functions
 $\np{\func^k}_{k\in \N}$ is monotonic and bounded, so it converges pointwise
 on $\dom\np{V_t}$ to a limit function $\func^*$. By
 Assumption~\ref{Assumptions}-\eqref{Stability-pointwise-convergence}, this
 implies that $\func^* \in \Funcbb_t$.

 Now, fix a compact set $K\subset \dom\np{V_t}$. First, since
 $\np{\func^k}_{k\in \N} \subset \Funcbb_t$, we have that for every $k\in \N$,
 $\dom\np{\func^k}$ contains $\dom\np{V_t}$ and the sequence
 $\np{\func^k}_{k \in \N}$ share a common modulus of continuity. Second, the
 continuous functions $\lvert \func_1 \rvert$ and $\lvert \func_2 \rvert$ are
 bounded from above by quantities independent of $x$ on the compact $K$, thus,
 $\sup_{k\in \N} \sup_{x\in K} \lvert \func^k\np{x} \rvert$ is finite. Hence,
 by Arzela-Ascoli~theorem the monotonic sequence $\np{\func^k}_{k\in \N}$
 converges uniformly on the compact $K$ to the continuous function $\func^*$.
\end{proof}

\begin{proposition}[Existence of an approximating limit]
 \label{ExistenceLimit}
 Let $t\in \ce{0,T}$ be fixed, the sequence of
 functions $\left( V_t^k \right)_{k\in \N}$ defined by Equation~\eqref{Vktdef} and Algorithm~\ref{\NomAlgo} $\Pro$-a.s.\ converges
 uniformly on every compact set included in the domain of $V_t$ (solution of Equation~\eqref{DynamicProgramming}) to a function $V_t^* \in \Funcbb_t$.
\end{proposition}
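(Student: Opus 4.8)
The plan is to carry out the argument for $\opt = \inf$, the case $\opt = \sup$ being symmetric, and to split it into three steps: pointwise convergence by monotonicity, membership of the limit in $\Funcbb_t$, and the upgrade to uniform convergence on compacts via the common modulus of continuity.

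First I would fix $t \in \ce{0,T}$ and invoke \Cref{ProprietesFaciles}-\eqref{proofitema}: the sequence $\np{V_t^k}_{k\in\N}$ is non-increasing and bounded below by $V_t$, which never takes the value $-\infty$. Hence, for every $x \in \dom\np{V_t}$, the real sequence $\np{V_t^k(x)}_k$ is non-increasing and bounded below, so it converges to a limit $V_t^*(x) \geq V_t(x) > -\infty$. Since for $k\geq 1$ one has $V_t^k \leq V_t^1 = \func_t^1$ (finite on $\dom\np{V_t}$, the basic functions being finite there, e.g. affine or quadratic), the limit $V_t^*$ is real-valued on $\dom\np{V_t}$. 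I would stress that this pointwise convergence holds for \emph{every} realization of the random draws; the $\mu$-almost sure wording only reflects that $V_t^k$, and thus $V_t^*$, are random objects.

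Second, each $V_t^k = \vopt_{\Func_t^k}$ belongs to $\Funcbb_t$ by \Cref{Assumptions}-\eqref{Stability-pointwise-optimum}, and $\np{V_t^k}_k$ converges pointwise to $V_t^*$ on $\dom\np{V_t}$; therefore \Cref{Assumptions}-\eqref{Stability-pointwise-convergence} yields $V_t^* \in \Funcbb_t$. By \Cref{Assumptions}-\eqref{CommonRegularity}, $V_t^*$ is then continuous on $\dom\np{V_t}$ and, crucially, shares with all the $V_t^k$ the common local modulus of continuity $\omega_{t,x}$.

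The last and main step is to deduce uniform convergence on a compact $K \subset \dom\np{V_t}$ by an $\varepsilon/3$ equicontinuity argument. Given $\varepsilon > 0$, for each $x\in K$ I would pick $\delta_x > 0$ with $\omega_{t,x}(\delta_x) < \varepsilon/3$ (possible since $\omega_{t,x}$ is continuous at $0$ and vanishes there), so that \emph{every} function of $\Funcbb_t$ oscillates by less than $\varepsilon/3$ on $B(x,\delta_x)\cap \dom\np{V_t}$; then extract a finite subcover $B(x_1,\delta_{x_1}),\dots,B(x_m,\delta_{x_m})$ of $K$ by compactness, use pointwise convergence at the finitely many centers to find $N$ with $\vert V_t^k(x_i) - V_t^*(x_i)\vert < \varepsilon/3$ for all $k\geq N$ and all $i$, and finally, for any $x\in K$, choose a center $x_i$ with $x \in B(x_i,\delta_{x_i})$ and apply the common modulus to both $V_t^k$ and $V_t^*$ to get $\vert V_t^k(x) - V_t^*(x)\vert < \varepsilon$. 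Alternatively, since the sequence is monotone and both the $V_t^k$ and $V_t^*$ are continuous on the compact $K$, one may simply invoke Dini's theorem. The only genuine difficulty is precisely this passage from pointwise to uniform convergence, and it is exactly what \Cref{Assumptions}-\eqref{CommonRegularity} is designed to resolve; the remaining ingredients are the monotonicity and lower bound from \Cref{ProprietesFaciles} and the closedness of $\Funcbb_t$ under pointwise limits. Note that nothing here uses the Trial point oracle or \Cref{AssumptionOracle}: identifying $V_t^*$ with $V_t$, rather than merely obtaining its existence, is what will require those and is deferred to the next proposition.
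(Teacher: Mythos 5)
Your proof is correct, but it takes a genuinely different route from the paper's. The paper deduces uniform convergence from compactness: equicontinuity (via the common modulus of \Cref{Assumptions}-\eqref{CommonRegularity}) together with uniform boundedness (via the monotone sandwich $V_t \leq V_t^k \leq V_t^1$ from \Cref{ProprietesFaciles}-\eqref{proofitema}) allows an application of the Arzela-Ascoli theorem, producing a uniformly convergent subsequence on the compact set, and monotonicity then forces the whole sequence to converge uniformly to that subsequential limit; membership $V_t^* \in \Funcbb_t$ is obtained at the very end exactly as you do, via \Cref{Assumptions}-\eqref{Stability-pointwise-convergence}. You instead construct the limit first, by monotone convergence of the real sequences $\np{V_t^k(x)}_k$, deduce $V_t^* \in \Funcbb_t$ (hence its continuity and the fact that it obeys the same local modulus), and only then upgrade to uniform convergence by an $\varepsilon/3$ covering argument, or by Dini's theorem. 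Your route is more elementary and self-contained: it avoids the compactness theorem and any subsequence extraction, and it isolates cleanly that the only role of \Cref{Assumptions}-\eqref{CommonRegularity} is the passage from pointwise to uniform convergence; your remarks that the a.s.\ qualifier is vacuous here and that \Cref{AssumptionOracle} plays no role are also accurate (the paper's proof uses no probabilistic ingredient either). One small point to tighten: the finiteness of $V_t^1$ on $\dom\np{V_t}$ should not be justified by ``the basic functions being affine or quadratic'' --- that appeals to the examples, not the abstract framework --- but rather by $V_t^1 = \vopt_{\Func_t^1} \in \Funcbb_t$ (\Cref{Assumptions}-\eqref{Stability-pointwise-optimum}) combined with the common regularity assumption, which forces elements of $\Funcbb_t$ to be finite and continuous on $\dom\np{V_t}$; this is the same reasoning the paper invokes in \Cref{ProprietesFaciles}-\eqref{proofitemd} and in its own boundedness step.
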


\begin{proof}
 By Lemma~\ref{ProprietesFaciles}-\eqref{proofitema}, for every $k\geq 1$ we
 have that \( V_t^1 \leq V_t^k \leq V_t \), when $\optop = \sup$ (and the
 inequalities are reversed when $\optop = \inf$). Now, we have that
 $V_t^1 \in \Funcbb_t$ and by Lemma~\ref{ContinuiteVt}, the mapping $V_t$ is also in
 $\Funcbb_t$. Moreover, by Lemma~\ref{ProprietesFaciles}-\eqref{proofitema},
 the sequence $\np{V_t^k}_{k \geq 1}$ is monotonic. Thus, by
 Lemma~\ref{lemma:unifconv}, we have that $\np{V_t^k}_{k\geq 1}$ converges uniformly
 on every compact set included in $\dom\np{V_t}$ to a function
 $V_t^* \in \Funcbb_t$.

 \medskip
 This ends the proof.
\end{proof}

\begin{proposition}
 \label{ConvergenceBellmanImages}
 Let $t\in \ce{0,T-1}$ be fixed and $V_{t+1}^*$ be the function defined in Proposition~\ref{ExistenceLimit}. The sequence $\B_t \left( V_{t+1}^k \right)$ $\Pro$-a.s.\
 converges uniformly to the continuous function $\B_t \left(V_{t+1}^*\right)$
 on every compact sets included in the domain of $V_t$.
\end{proposition}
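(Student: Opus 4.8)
The plan is to combine the monotone convergence from \Cref{ExistenceLimit} with the two structural hypotheses that were tailored for exactly this passage to the limit, namely \Cref{Assumptions}-\eqref{optimal-sets} (existence of optimal sets) and \Cref{Assumptions}-\eqref{Additively-subhomogeneous} (additive subhomogeneity). I would treat the case $\opt = \inf$, the case $\opt=\sup$ being symmetric. Fix a compact $K_t \subset \dom\np{V_t}$ and $\varepsilon > 0$; the goal is a bound on $\sup_{K_t}\vert \B_t\np{V_{t+1}^k} - \B_t\np{V_{t+1}^*}\vert$ that is uniform in $x$ and vanishes with $\varepsilon$.

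First I would record the trivial one-sided estimate. By \Cref{ExistenceLimit} (together with \Cref{ProprietesFaciles}-\eqref{proofitema}) one has $V_{t+1}^k \geq V_{t+1}^*$ on the whole domain, so since $\B_t$ is order preserving (\Cref{Assumptions}-\eqref{order-preserving}),
\[
 \B_t\np{V_{t+1}^k} \geq \B_t\np{V_{t+1}^*} \eqfinp
\]
The entire content of the proof is then the matching upper bound on $K_t$. The hard part is the localization: uniform convergence of $\np{V_{t+1}^k}_k$ holds only on compact sets, whereas $\B_t$ evaluated at a point of $K_t$ may probe $V_{t+1}^k$ at states ranging over an unbounded region, where $V_{t+1}^k$ and $V_{t+1}^*$ need not be close. \Cref{Assumptions}-\eqref{optimal-sets} is designed precisely to neutralize this tail. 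Since $V_{t+1}^* \in \Funcbb_{t+1}$ by \Cref{ExistenceLimit}, applying that assumption with $\func = V_{t+1}^*$ and $\lambda = \varepsilon$ produces a compact $K_{t+1}\subset \dom\np{V_{t+1}}$ with $\B_t\bp{V_{t+1}^* + \varepsilon + \indi{K_{t+1}}} \leq \B_t\bp{V_{t+1}^* + \varepsilon} + \indi{K_t}$.

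On this now-fixed compact $K_{t+1}$, \Cref{ExistenceLimit} supplies an index $N$ such that $V_{t+1}^k \leq V_{t+1}^* + \varepsilon$ on $K_{t+1}$ for all $k \geq N$; outside $K_{t+1}$ the right-hand side $V_{t+1}^* + \varepsilon + \indi{K_{t+1}}$ equals $+\infty$, so in fact $V_{t+1}^k \leq V_{t+1}^* + \varepsilon + \indi{K_{t+1}}$ everywhere. I would then chain the inequalities on $K_t$ for $k \geq N$: order preservation gives $\B_t\np{V_{t+1}^k} \leq \B_t\bp{V_{t+1}^* + \varepsilon + \indi{K_{t+1}}}$, the optimal-sets inequality collapses $\indi{K_{t+1}}$ to $\indi{K_t}$, and additive subhomogeneity (\Cref{Assumptions}-\eqref{Additively-subhomogeneous}), which furnishes a constant $M_t$ depending only on $K_t$, yields $\B_t\bp{V_{t+1}^*+\varepsilon} \leq \B_t\np{V_{t+1}^*} + \varepsilon M_t$ on $K_t$.

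Putting the two bounds together, for every $x \in K_t$ and $k \geq N$,
\[
 \B_t\np{V_{t+1}^*}(x) \leq \B_t\np{V_{t+1}^k}(x) \leq \B_t\np{V_{t+1}^*}(x) + \varepsilon M_t \eqfinv
\]
so $\sup_{K_t}\vert \B_t\np{V_{t+1}^k} - \B_t\np{V_{t+1}^*}\vert \leq \varepsilon M_t$. Since $M_t$ is fixed and $\varepsilon$ arbitrary, this gives the claimed uniform convergence on $K_t$. Continuity of the limit $\B_t\np{V_{t+1}^*}$ follows from \Cref{Assumptions}-\eqref{StabilityBellman}, which places it in $\Funcbb_t$, and \Cref{Assumptions}-\eqref{CommonRegularity}. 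Because the whole argument is driven by the almost sure convergence of \Cref{ExistenceLimit}, the conclusion holds $\mu$-a.s. I expect the only genuine obstacle to be the correct order of quantifiers in the localization step: one must extract $K_{t+1}$ from the optimal-sets assumption applied to the limit $V_{t+1}^*$ \emph{before} invoking uniform convergence on $K_{t+1}$, so that $N$ is chosen last.
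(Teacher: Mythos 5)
Your proof is correct, and it is leaner than the paper's. The paper proceeds in two stages: it first shows that the sequence $\np{\B_t\np{V_{t+1}^k}}_{k\in\N}$ is equicontinuous (via \Cref{Assumptions}-\eqref{StabilityBellman} and \Cref{Assumptions}-\eqref{CommonRegularity}) and uniformly bounded on $K_t$, invokes the Arzela-Ascoli theorem together with monotonicity to extract a continuous uniform limit $\phi$ on $K_t$, and only then runs the sandwich argument -- order preservation for the lower bound, optimal sets plus additive subhomogeneity for the upper bound -- to identify $\phi$ with $\B_t\np{V_{t+1}^*}$ on $K_t$. You observe, in effect, that the extraction step is superfluous: your two-sided estimate $\B_t\np{V_{t+1}^*} \leq \B_t\np{V_{t+1}^k} \leq \B_t\np{V_{t+1}^*} + \varepsilon M_t$ on $K_t$ for $k\geq N(\varepsilon)$, with $M_t$ depending only on $K_t$ and not on $\varepsilon$, is already the definition of uniform convergence to $\B_t\np{V_{t+1}^*}$, and continuity of that limit follows from \Cref{Assumptions}-\eqref{StabilityBellman} and \Cref{Assumptions}-\eqref{CommonRegularity} exactly as in \Cref{ProprietesFaciles}-\eqref{proofitemd}. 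A second, smaller difference: your quantifier order ($\varepsilon$ first, then the compact $K_{t+1}$ obtained from \Cref{Assumptions}-\eqref{optimal-sets} applied to $\func = V_{t+1}^*$ and $\lambda = \varepsilon$, then $N$ last) is the one consistent with the assumption as stated, where $K_{t+1}$ may depend on $\lambda$; the paper's write-up fixes $K_{t+1}$ before introducing $\epsilon$, tacitly using a version of the assumption in which one compact set works for all $\lambda$ simultaneously. Your closing remark therefore pinpoints a real, if easily repaired, looseness in the original argument. The only caveats you inherit from the paper rather than create are that order preserving is invoked on functions of the form $\func + \lambda + \indi{K_{t+1}}$, which need not belong to $\Funcbb_{t+1}$, and that finiteness on $K_t$ of the quantities being compared is left implicit; both points are glossed over in exactly the same way in the paper's own proof.
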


\begin{proof}
 First we consider the case $\optop = \inf$.  As the sequence
 $\left( V_{t+1}^k \right)_{k\in \N}$ is non-increasing and using the fact that the operator $\B_t$ is order preserving, the sequence
 $\np{ \B_t \np{ V_{t+1}^k }}_{k\in \N}$ is also non-increasing.
 Moreover, we have that
 \begin{align*}
  V_t^1 & \geq V_t^k \tag{\text{Lemma~\ref{ProprietesFaciles}-\eqref{proofitema}}}              \\
        & \geq \B_t\np{V_{t+1}^k} \tag{\text{Lemma~\ref{ProprietesFaciles}-\eqref{proofitemb}}} \\
        & \geq \B_t \np{V_{t+1}}  \tag{\text{Lemma~\ref{ProprietesFaciles}-\eqref{proofitema}}} \\
        & = V_t.
 \end{align*}
 Thus, by Lemma~\ref{lemma:unifconv}, the sequence of functions $\np{\B_t\np{V_{t+1}^k}}_{k \geq 1}$ converges uniformly on every compact set included in $\dom\np{V_t}$ to a function $\phi \in \Funcbb_t$. Let $K_t$ be a given compact set
 included in $\dom \np{V_t}$. We now show that the function $\phi$ is equal to $\B_t\left( V_{t+1}^*\right)$ on the given compact $K_t$ or equivalently
 we show that $\phi + \indi{K_t} = \B_t\left( V_{t+1}^* \right) + \indi{K_t}$. As already shown in Proposition~\ref{ExistenceLimit},
 we have that $V_{t+1}^k  \geq V_{t+1}^*$, which combined with the fact that the operator $\B_t$ is order preserving,
 gives, for every $k\geq 1$, that
 \(
 \B_t \np{ V_{t+1}^k }  \geq \B_t\np{V_{t+1}^*}.
 \)
 Now, adding on both side of the previous inequality the mapping $\indi{K_t}$ and taking the limit as $k$ goes to infinity,
 we have that
 \begin{equation*}
  \phi + \indi{K_t} \geq \B_t\left( V_{t+1}^* \right) + \indi{K_t}.
 \end{equation*}
 For the converse inequality, start by recalling that,
 by the compactness condition
 (see Assumption~\ref{Assumptions}-\eqref{optimal-sets}), there exists a compact set $K_{t+1} \subset \dom \np{V_{t+1}}$ such that, for every $\func \in \Funcbb_{t+1}$ and every $\lambda \geq 0$, we have that
 \begin{equation}
  \label{eq:tmp111}
  \B_t\left( \func + \lambda + \indi{K_{t+1}} \right) \leq \B_t \left( \func + \lambda \right) + \indi{K_t}.
 \end{equation}
 Now, by Proposition~\ref{ExistenceLimit}, the non-increasing sequence $\left(V_{t+1}^k \right)_{k\in \N}$ converges
 uniformly to $V_{t+1}^*\in \Funcbb_{t+1}$ on the compact set $K_{t+1}$. Thus,
 for any fixed $\epsilon > 0$, there exists an integer $k_0 \in \N$,
 such that we have
 \[
  V_{t+1}^k \leq V_{t+1}^k + \indi{K_{t+1}} \leq V_{t+1}^* + \epsilon + \indi{K_{t+1}}\,,
 \]
 for all $k \geq k_0$.
 By~Assumption~\ref{Assumptions}-\eqref{order-preserving} and~Assumption~\ref{Assumptions}-\eqref{Additively-subhomogeneous}, the operator $\B_t$ is order preserving and additively $M_t$-subhomogeneous, thus we get using Equation~\eqref{eq:tmp111} that
 \begin{align*}
  \B_t\left( V_{t+1}^k \right) \leq \B_t\left ( V_{t+1}^k + \indi{K_{t+1}} \right)
   & \leq  \B_t\np{V_{t+1}^*   + \epsilon +\indi{K_{t+1}}},         \tag{by Assumption~\ref{Assumptions}-\eqref{order-preserving}}     \\ 
   & \leq  \B_t\np{V_{t+1}^* +  \epsilon} +  \indi{K_{t}}, \tag{by~Equation~\eqref{eq:tmp111}}                                         \\
   & \leq \B_t\np{V_{t+1}^*} + M_t\epsilon + \indi{K_t} \tag{by~Assumption~\ref{Assumptions}-\eqref{Additively-subhomogeneous}}\eqfinp 
 \end{align*}
 Adding $\indi{K_t}$ on the left hand side, we have for every $k\geq k_0$ that
 \(
 \B_t\left(V_{t+1}^k\right) + \indi{K_t} \leq \B_t\left( V_{t+1}^* \right) + M_t\epsilon + \indi{K_t} .
 \)
 Thus, taking the limit when $k$ goes to infinity we obtain that
 \begin{equation*}
  \phi + \indi{K_t} \leq \B_t\left( V_{t+1}^* \right) + M_t \epsilon + \indi{K_t} .
 \end{equation*}
 The result has been proved for all $\epsilon >0$ and we have thus shown that
 $\phi = \B_t\left( V_{t+1}^*\right)$ on the compact set $K_t$. We conclude that
 $\left( \B_t\left( V_{t+1}^k\right) \right)_{k\in \N}$ converges uniformly to
 the function $\B_t\left( V_{t+1}^*\right)$ on the compact set $K_t$.
 For the case $\optop = \sup$, \emph{mutatis mutantis} we have that
 \(
 \B_t\np{V_{t+1}^k} \leq \B_t\np{V_{t+1}^*}.
 \)
 Similarly, as the sequence $\np{V_{t+1}^k}$ is non-decreasing and $\B_t$ is
 order preserving, one gets that for every $k$ large enough
 \[
  \B_t\np{V_{t+1}^*} \geq \B_t\np{V_{t+1}^*} + \indi{K_{t+1}} \geq \B_t\np{V_{t+1}^* + \epsilon + \indi{K_{t+1}}}.
 \]
 Thus, by Equation~\eqref{eq:tmp111} and $M_t$-sub-homogeneity we have that
 \(
 \B_t\np{V_{t+1}^*} + \indi{K_t} \leq \B_t\np{V_{t+1}^k} + M_t \epsilon + \indi{K_t},
 \)
 which yields the result when $k$ goes to infinity. This ends the proof.
\end{proof}

We want to exploit the fact that our approximations of the final cost function
are exact in the sense that we have equality between $V_T^k$ and $V_T$ at the
points drawn in Algorithm~\ref{\NomAlgo}, that is, the tightness assumption of the
selection function is much stronger at time $T$ than for times $t<T$. Thus we
want to propagate the information backward in time: starting from time $t=T$ we
want to deduce information on the approximations for times $t<T$.

In order to show that $V_t = V_t^*$ on some set $S_t$, a dissymmetry between upper and lower approximations is emphasized.  We introduce the notion of optimal sets $\left( S_t \right)_{t\in \ce{0,T}}$ with respect to a sequence of functions 	$\left(\func_t\right)_{t\in \ce{0,T}}$ as a condition on the sets $\left( S_t \right)_{t\in \ce{0,T}}$ such that in order to compute the restriction of $\B_t \left( \phi_{t+1} \right)$ to $S_t$, one only needs to know $\phi_{t+1}$ on the set $S_{t+1}$. The Figure~\ref{fig:OptimalSets} illustrates this notion.

\begin{definition}[Optimal sets]
 \label{optimalDraws}
 Let $\left(\func_t\right)_{t\in \ce{0,T}}$ be $T{+}1$ functions on $\X$. A
 sequence of sets $\left( S_t \right)_{t\in \ce{0,T}}$ is said to be
 \emph{$\left(\func_t\right)$-optimal} if for every $t\in \ce{0,T-1}$, we have
 \begin{equation}
  \label{eq:optimalSets}
  \B_t\left( \func_{t+1} + \indi{S_{t+1}} \right) + \indi{S_t} = \B_t \left( \func_{t+1} \right) + \indi{S_t}.
 \end{equation}
\end{definition}

\begin{figure}
 \centering
 \includegraphics[scale=0.8]{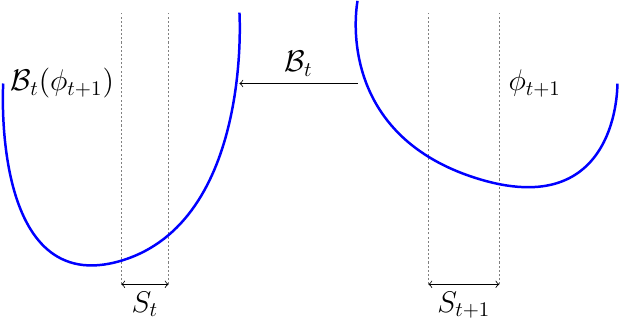}
 \caption{\label{fig:OptimalSets}The optimality of the sets
 $\left( S_t \right)_{t\in \ce{0,T}}$ means that in order to compute the
 restriction of $\B_t \left( \phi_{t+1} \right)$ on $S_t$, one only needs to
 know the values of $\phi_{t+1}$ on the set $S_{t+1}$.}
\end{figure}

When approximating from below, the optimality of sets is only needed for the
limit functions $\left(V_t^*\right)_{t\in \ce{0,T}}$, whereas when approximating
from above, one needs the optimality of sets with respect to the value functions
$\left(V_t\right)_{t\in \ce{0,T}}$. It seems easier to ensure the $\np{V_t^*}$-optimality of
sets than $\np{V_t}$-optimality as the function $V_t^*$ is known through the
sequence $\left( V_t^k \right)_{k\in \N}$, whereas the function $V_t$ is,
\emph{a priori}, unknown. This fact is discussed in
Sections~\ref{SDDP_Example} and~\ref{sec:Exemples_switch}.

\begin{lemma}[Uniqueness in restricted Bellman Equations]
 \label{UnicityBellman}
 Let $\left( X_t \right)_{t \in \ce{0,T}}$ be a sequence of sets such that for
 every $t\in \ce{0,T}$, $X_t \subset \dom\np{V_t}$ and which is

 \noindent -- $\np{V_t}$-optimal when $\optop = \inf$,

 \noindent -- $\np{V_t^*}$-optimal when $\optop = \sup$.

 If the sequence of functions $\left(V_t^* \right)_{t\in \ce{0,T}}$ satisfies the following restricted Bellman Equations:
 \begin{equation}
  \label{RestrictedBellmanEquation}
  V_T^* + \delta_{X_T}= \psi + \delta_{X_T}
  \quad \text{ and }\quad
  \forall t\in \ce{0,T-1}, \ \B_t\left( V_{t+1}^* \right) + \indi{X_t} = V_{t}^*  + \indi{X_t}
  \eqfinp
 \end{equation}
 Then, for every $t\in \ce{0,T}$ and every $x\in X_t$, we have that $V_{t}^* (x) = V_t(x)$.
\end{lemma}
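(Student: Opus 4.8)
The plan is to prove the identity $V_t^* = V_t$ on $X_t$ by backward induction on $t \in \ce{0,T}$, handling the two cases $\opt = \inf$ and $\opt = \sup$ in parallel and invoking their dissymmetry only at one decisive point. For the base case $t = T$, recall that $V_T = \psi$ by the final condition, so the first restricted Bellman equation reads $V_T^* + \indi{X_T} = V_T + \indi{X_T}$; evaluating at $x \in X_T$, where $\indi{X_T}$ vanishes and $V_T(x)$ is finite since $X_T \subset \dom\np{V_T}$, gives $V_T^*(x) = V_T(x)$.

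For the inductive step, I assume $V_{t+1}^* = V_{t+1}$ on $X_{t+1}$. The first move is to upgrade this pointwise equality on $X_{t+1}$ to the global identity $V_{t+1}^* + \indi{X_{t+1}} = V_{t+1} + \indi{X_{t+1}}$: on $X_{t+1}$ both sides agree by the induction hypothesis, and off $X_{t+1}$ both equal $+\infty$ by the extended-arithmetic convention $a + \infty = +\infty$ for every $a \in \overline{\R}$. Applying $\B_t$ and adding $\indi{X_t}$ to both sides yields
\begin{equation}
 \label{eq:starcollapse}
 \B_t\bp{V_{t+1}^* + \indi{X_{t+1}}} + \indi{X_t} = \B_t\bp{V_{t+1} + \indi{X_{t+1}}} + \indi{X_t}\eqfinp
\end{equation}

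The dissymmetry now enters through which side of \eqref{eq:starcollapse} the optimal-sets property collapses. When $\opt = \inf$, the sets are optimal with respect to $\np{V_t}$, so by \Cref{optimalDraws} the right-hand side of \eqref{eq:starcollapse} equals $\B_t\np{V_{t+1}} + \indi{X_t} = V_t + \indi{X_t}$, while order-preservation (\Cref{Assumptions}-\eqref{order-preserving}) bounds the left-hand side from below by $\B_t\np{V_{t+1}^*} + \indi{X_t} = V_t^* + \indi{X_t}$, the last equality being the restricted Bellman equation; hence $V_t \geq V_t^*$ on $X_t$. Symmetrically, when $\opt = \sup$ the sets are optimal with respect to $\np{V_t^*}$, which makes the left-hand side collapse to $V_t^* + \indi{X_t}$ and leaves the right-hand side bounded below by $V_t + \indi{X_t}$, yielding $V_t^* \geq V_t$ on $X_t$. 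In each case the inequality just obtained points opposite to the monotone bound of \Cref{ProprietesFaciles}-\eqref{proofitema} (namely $V_t^* \geq V_t$ when $\opt = \inf$ and $V_t^* \leq V_t$ when $\opt = \sup$), so combining the two forces $V_t^* = V_t$ on $X_t$ and the induction closes.

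The main obstacle is not the estimate itself but choosing the correct side of \eqref{eq:starcollapse} to collapse: the optimality of the sets must be stated for the sequence whose Bellman image can be identified using only its values on $X_{t+1}$ — the true value functions $\np{V_t}$ when approximating from above, and the limit functions $\np{V_t^*}$ when approximating from below. This is exactly the dissymmetry built into the hypotheses; stating optimality for the other sequence would produce an inequality in the same direction as the monotone bound, from which no equality could be deduced. The remaining care is purely the bookkeeping of the extended-reals indicators $\indi{\cdot}$, which is why the argument is phrased throughout via the identities with $\indi{X_t}$ rather than by pointwise evaluation.
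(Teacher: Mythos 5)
Your proof is correct and follows essentially the same route as the paper's: backward induction on $t$, with the monotone bound of \Cref{ProprietesFaciles}-\eqref{proofitema} supplying one inequality and a chain combining the induction hypothesis, order preservation of $\B_t$, the optimal-sets identity, and the (restricted) Bellman equations supplying the other. The only difference is organizational: you first equate $\B_t\bp{V_{t+1}^* + \indi{X_{t+1}}} + \indi{X_t}$ with $\B_t\bp{V_{t+1} + \indi{X_{t+1}}} + \indi{X_t}$ and then collapse or bound each side according to the case, whereas the paper strings the identical ingredients into a single chain of inequalities.
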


\begin{proof}
 We prove the lemma by backward induction on time $t\in \ce{0,T}$. We first treat the case $\optop = \inf$.
 At time $t=T$, since $V_T$ is given by Equation~\eqref{DynamicProgramming}, we have $V_T= \psi$. We therefore have
 by Equation~\eqref{RestrictedBellmanEquation} that $V_T^* + \indi{X_T} = \psi + \indi{X_T} = V_T + \indi{X_T}$, which gives the fact that
 functions $V_T^*$ and $V_T$ coincide on the set $X_T$.
 Now, let time $t\in \ce{0,T-1}$ be fixed and assume that we have $V_{t+1}^*(x) =
  V_{t+1}(x)$ for every $x\in X_{t+1}$, or equivalently:
 \begin{equation}
  \label{eq:tmp0}
  V_{t+1}^*  + \indi{X_{t+1}} = V_{t+1} + \indi{X_{t+1}}\,.
 \end{equation}
 Using Lemma~\ref{ProprietesFaciles}-\eqref{proofitema}, the sequence of functions $\np{V_t^k}_{k\in \N}$ is lower bounded by $V_t$. Taking the
 limit in $k$, we obtain that $V_t^* \geq V_t$, thus we only have to prove that
 $V_t^* \leq V_t$ on $X_t$, that is $V_t^* + \indi{X_t} \leq V_t + \indi{X_t}$.
 We successively have:
 \begin{align}
  V_{t}^*  + \indi{X_t}
   & = \B_t\left( V_{t+1}^* \right) + \indi{X_t}
  \tag{by~\eqref{RestrictedBellmanEquation}}
  \\
   & \le
  \B_t\left( V_{t+1}^* +\indi{X_{t+1}} \right) + \indi{X_t}
  \tag{$\B_t$ is order preserving}
  \\
   & = \B_t\left( V_{t+1} +\indi{X_{t+1}} \right) + \indi{X_t}
  \tag{by induction assumption~\eqref{eq:tmp0}}
  \\
   & = \B_t\left( V_{t+1} \right) + \indi{X_t}
  \tag{by~\eqref{eq:optimalSets}, $(X_t)_{{t \in \ce{0,T}}}$ is $(V_t)$-optimal}
  \\
   & = V_t + \indi{X_t},
  \tag{by~\eqref{DynamicProgramming}}
 \end{align}
 which concludes the proof in the case of $\optop = \inf$.

 Now we prove the case $\optop = \sup$ in a similar way by backward induction on time $t\in \ce{0,T}$. As for the case $\optop=\inf$, at time $t=T$, one has $V_T^* + \indi{X_T} = V_T + \indi{X_T}$. Now assume that for some $t\in \ce{0,T-1}$ one has $V_{t+1}^{*} + \indi{X_{t+1}} = V_{t+1} + \indi{X_{t+1}}$. By Lemma~\ref{ProprietesFaciles}-\eqref{proofitema}, the sequence of functions $\np{V_t^k}$ is now upper bounded by $V_t$. Thus, taking the limit in $k$ we obtain that $V_t^* \leq V_t$ and we only need to prove that $V_t^* + \indi{X_t} \geq V_t + \indi{X_t}$. We successively have:
 \begin{align}
  V_{t}  + \indi{X_t}
   & = \B_t\left( V_{t+1} \right) + \indi{X_t}
  \tag{by~\eqref{DynamicProgramming}}
  \\
   & \leq
  \B_t\left( V_{t+1} +\indi{X_{t+1}} \right) + \indi{X_t}
  \tag{$\B_t$ is order preserving}
  \\
   & = \B_t\left( V_{t+1}^* +\indi{X_{t+1}} \right) + \indi{X_t}
  \tag{by induction assumption~\eqref{eq:tmp0}}
  \\
   & = \B_t\left( V_{t+1}^* \right) + \indi{X_t}
  \tag{$(X_t)_{{t \in \ce{0,T}}}$ is $(V_t^*)$-optimal}
  \\
   & = V_t^* + \indi{X_t}, \tag{by~\eqref{RestrictedBellmanEquation}}
 \end{align}
 This ends the proof.
\end{proof}

One cannot expect the limit function, $V_t^*$, to be equal everywhere to the
value function, $V_t$, given by Equation~\eqref{DynamicProgramming}.  However,
one can expect an (almost sure over the draws) equality between the two
functions $V_t$ and $V_t^*$ on all possible cluster points of sequences
$\left( y_k \right)_{k\in \N}$ with $y_k \in K_t^k$ for all $k\in \N$, that is,
on the set $\limsup K_t^k$.

\begin{theorem}[Convergence of \NomAlgo]
 \label{ConvergenceTheorem}
 Define $K_t^* := \limsup_k K_t^k$, for every time $t\in \ce{0,T}$.  Assume
 that, $\Pro$-a.s.\ the sets $\left(K_t^*\right)_{t\in\ce{0,T}}$ are
 $\np{V_t}$-optimal when $\optop = \inf$ (resp.  $\np{V_t^*}$-optimal when
 $\optop = \sup$). Then, $\Pro$-a.s.\ for every $t \in \ce{0,T}$ the function
 $V_t^*$ defined in Proposition~\ref{ExistenceLimit} is equal to the value function $V_t$
 on $K_t^*$.
\end{theorem}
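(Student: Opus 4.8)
The plan is to deduce the theorem from \Cref{UnicityBellman} applied to the random sets $X_t := K_t^*$. By hypothesis these sets are $\mu$-a.s.\ $\np{V_t}$-optimal (resp.\ $\np{V_t^*}$-optimal), and I work on the full-measure event where in addition $K_t^*\subset\dom\np{V_t}$ for every $t$, so that \Cref{UnicityBellman} applies. It then suffices to show, $\mu$-a.s., that the limit functions $\np{V_t^*}_t$ of \Cref{ExistenceLimit} satisfy the restricted Bellman equations~\cref{RestrictedBellmanEquation} on the $K_t^*$, namely $V_T^*=\psi$ on $K_T^*$ and $\B_t\np{V_{t+1}^*}=V_t^*$ on $K_t^*$ for $t<T$; I treat $\opt=\inf$, the case $\opt=\sup$ being symmetric.

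Both equalities will come from passing to the limit in the \emph{exact} tightness identities satisfied at the drawn points. Indeed, \Cref{ProprietesFaciles}-\eqref{proofitemc} gives $\B_t\np{V_{t+1}^k}\np{x_t^{k-1}}=V_t^k\np{x_t^{k-1}}$ for every $k$ and $t<T$, and tightness at time $T$ gives $V_T^k\np{x_T^{k-1}}=V_T\np{x_T^{k-1}}$. Suppose that for a given $x\in K_t^*$ one can extract a subsequence of the draws $\np{x_t^{k-1}}_k$ converging to $x$. Then, because $V_t^k\to V_t^*$ and $\B_t\np{V_{t+1}^k}\to\B_t\np{V_{t+1}^*}$ uniformly on a compact neighbourhood of $x$ contained in $\dom\np{V_t}$ (\Cref{ExistenceLimit} and \Cref{ConvergenceBellmanImages}), and because the limits are continuous there, letting $k\to\infty$ along that subsequence yields $\B_t\np{V_{t+1}^*}\np{x}=V_t^*\np{x}$, resp.\ $V_T^*\np{x}=\psi\np{x}$. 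Thus the restricted Bellman equations hold on $K_t^*$ as soon as every point of $K_t^*$ is a limit of a subsequence of the draws, and \Cref{UnicityBellman} then gives $V_t^*=V_t$ on $K_t^*$, as wanted.

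Everything therefore reduces to the density statement I expect to be the main obstacle: $\mu$-a.s., every point of $K_t^*=\limsup_k K_t^k$ is the limit of a subsequence of $\np{x_t^{k}}_k$. The difficulty is that $K_t^*$ is itself a random set, so I cannot simply apply Borel--Cantelli to a fixed countable family of balls. I would argue with the filtration $\np{\mathcal F_k}_k$ generated by the draws up to step $k$: the oracle output $\np{K_t^k,\mu^k}$ is $\mathcal F_k$-measurable, while conditionally on $\mathcal F_k$ the draw $x_t^{k}$ has law the $t$-th marginal $\mu^k_t$. Fixing a countable family $\mathcal D$ of open balls with rational centres and radii, L\'evy's conditional Borel--Cantelli lemma yields, $\mu$-a.s.\ and simultaneously for all $B\in\mathcal D$, the equivalence between ``$x_t^{k}\in B$ infinitely often'' and $\sum_k\mu^k_t(B)=\infty$.

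To finish I would fix $x\in K_t^*$ and $\eta>0$, pick a ball $B_0\in\mathcal D$ with $x\in B_0\subset B\np{x,\eta}$, and use $x\in\limsup_k K_t^k$ to find $y_{k_j}\in K_t^{k_j}$ with $y_{k_j}\to x$. For a fixed small radius $\eta'$ one has $B\np{y_{k_j},\eta'}\subset B_0$ for all large $j$, whence the covering property of \Cref{AssumptionOracle} at the $k_j$-th call gives $\mu^{k_j}_t(B_0)\ge\mu^{k_j}_t\bp{B\np{y_{k_j},\eta'}\cap K_t^{k_j}}\ge g_t^{k_j}\np{\eta'}$, so that $\sum_k\mu^k_t(B_0)\ge\sum_j g_t^{k_j}\np{\eta'}$. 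Provided this series diverges, the equivalence of the previous paragraph yields infinitely many draws in $B_0\subset B\np{x,\eta}$; letting $\eta=1/m\downarrow 0$ and diagonalizing produces a subsequence of draws converging to $x$, and since $x\in K_t^*$ was arbitrary this is the desired density. The truly delicate point is precisely this divergence: it requires the lower bounds $g_t^{k_j}$ returned at the successive oracle calls not to decay too fast along $\np{k_j}$ --- which is the role of \Cref{AssumptionOracle}, and which holds automatically when, as in the examples, the oracle returns a common $g_t$ at every call.
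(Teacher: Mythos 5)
Your proposal is correct, and while it shares the paper's skeleton --- reduce to \Cref{UnicityBellman} by verifying the restricted Bellman equations~\cref{RestrictedBellmanEquation} with $X_t = K_t^*$, using the oracle's covering bound plus a Borel--Cantelli-type argument as the probabilistic engine --- it executes the central step genuinely differently. The paper splits the equation at time $t<T$ into two inequalities: $\B_t\np{V_{t+1}^*} \leq V_t^*$ on all of $\dom\np{V_t}$ (via \Cref{Assumptions}-\eqref{optimal-sets} and \cref{InegaliteFacile}), and $\B_t\np{V_{t+1}^*} \geq V_t^*$ on $K_t^*$ by contradiction: a gap $h>0$ at some $x\in K_t^*$, three $h/4$ estimates (equicontinuity, pointwise convergence of the Bellman images, continuity of the limit), tightness at a \emph{single} draw landing in a small ball around $x$, and finally monotonicity of $\np{V_t^k}_{k\in\N}$ to produce the contradiction. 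You instead prove the stronger statement that, almost surely, every point of $K_t^*$ is a limit of a subsequence of the draws, and pass to the limit in the exact tightness identities of \Cref{ProprietesFaciles}-\eqref{proofitemc}; this yields both inequalities at once, avoids the contradiction and any direct appeal to monotonicity or to \Cref{Assumptions}-\eqref{optimal-sets} (which you use only through \Cref{ConvergenceBellmanImages}, as does the paper), and treats $t=T$ and $t<T$ uniformly --- note that the paper's one-line claim ``by tightness, $V_T^*=V_T$ on $K_T^*$'' silently relies on precisely your density-plus-limit argument. Your probabilistic step is also more rigorous than the paper's: the conditional (L\'evy) Borel--Cantelli lemma applied simultaneously to a countable family of rational balls is exactly what is needed to handle the fact that $K_t^*$ and the conditional laws $\mu^k$ are random, whereas the paper's bound $\prod_{k}\bp{1-g\np{\eta'/2}}=0$ is the informal version of the same argument.

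Two caveats, neither of which is a gap specific to your proof. First, the divergence $\sum_j g_t^{k_j}\np{\eta'}=\infty$ that you flag as delicate is needed by the paper as well: writing $g\np{\eta'/2}$ with no iteration index and concluding $\prod_k\bp{1-g\np{\eta'/2}}=0$ presupposes a call-independent $g_t$, which is the intended reading of \Cref{AssumptionOracle} (see the example following it); so your caveat is well placed and consistent with the paper. Second, two small repairs to your wording: a ``compact neighbourhood of $x$ contained in $\dom\np{V_t}$'' need not exist if $x$ lies on the boundary of the domain --- use instead the compact set consisting of $x$ together with the tail of the convergent subsequence of draws, on which \Cref{ExistenceLimit} and \Cref{ConvergenceBellmanImages} do give uniform convergence; and the inclusion $K_t^*\subset\dom\np{V_t}$ that you postulate on a full-measure event is not automatic (only $K_t^k\subset\dom\np{V_t}$ is guaranteed, and a limsup may pick up boundary points), but the paper tacitly assumes the same inclusion when invoking \Cref{UnicityBellman}, and it holds whenever the domains are closed, as in all of the paper's examples.
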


\begin{proof} We will only consider the case $\optop = \inf$ as the proof for the case $\optop = \sup$ is analogous.
 We will show that Equation~\eqref{RestrictedBellmanEquation} holds $\Pro$-almost surely with $X_t = K_t^*$, $t\in \ce{0,T}$.
 The proof is decomposed in several steps

 \noindent $\bullet$ Reformulation using the separability of $\X$.
 Let $C := (C_t)_t \subset \X^{T+1}$ be compact in $\dom(V_0)\times \ldots \times \dom(V_T)$. For every $t\in \ce{0,T-1}$, set $\Delta_t : x_t \in \X \to V_t^*(x_t) - \B_t\np{V_{t+1}^*}(x_t) \in \overline{\R}$, $\Delta_T : x_T \in \X \to V_T^*(x_T) - V_T(x_T) \in \overline{\R}$ and $\Delta := \np{\Delta}_{t\in \ce{0,T}}$. Also write $K^* := \np{K_t^*}_{t\in \ce{0,T}}$. We want to show that
 \begin{equation}
  \label{eq:thm1}
  \Pro\Bc{\forall x \in C, \bp{x \in K^* \Rightarrow \Delta(x) = 0} } = 1 \; .
 \end{equation}
 By continuity of $V_t^* - \B_t\np{V_{t+1}^*}$ (resp. $V_T^* - V_T$) for every $t\in \ce{0,T-1}$ (resp. $t = T$) and compactness of $K$, Equation~\eqref{eq:thm1} is equivalent to
 \begin{equation}
  \label{eq:thm2}
  \Pro \Bc{\forall \epsilon > 0, \exists r > 0, \forall x \in K, \bp{x \in \np{K^* + r \mathbb{B}} \Rightarrow \Delta(x) \leq \epsilon }} = 1 \; .
 \end{equation}
 Without loss of generality, by density, one may restrict $\epsilon$ and $r$ to the countable set $\Q_+^*$ and the set $C$ to the set $C \cap \np{\Q^n}^{T+1}$, that is, Equation~\eqref{eq:thm2} is equivalent to
 \begin{equation}
  \label{eq:thm3}
  \forall \epsilon \in \Q_+^*, \exists r \in \Q_+^*, \forall x \in C \cap \np{\Q^n}^{T+1},
  \Pro\bc{  x \in \np{K^* + r\mathbb{B}} \Rightarrow \Delta(x) \leq \epsilon } = 1     \;.
 \end{equation}

 For the remainder of the proof, we fix $\epsilon \in \Q_+^*$. Now, we exploit
 the equicontinuity of the sequence of functions $\np{V_t^k}_{k\in \N}$ and
 $\bp{\B_t\np{V_{t+1}^k}}_{k\in \N}$ in order to compute a suitable radius
 $r' \in \Q_+^*$ so as to satisfy Equation~\eqref{eq:thm3}. We separate the cases $t=T$
 and $t<T$.

 \noindent $\bullet$ Equicontinuity and uniform convergence, case
 $\mathbf{t = T}$.  As the functions $V_T$ and $V_t^k$, for $k\in \N$ are in
 $\Funcbb_T$, they share a common modulus of continuity on the compact
 $C_T$. Thus, they share a common uniform modulus of continuity. Hence, there
 exists a radius $r_T \in \Q_+^*$, such that for every $x_T \in C_T \cap \Q^n$,
 if $y_T \in B\np{x_T, r_T} \cap \dom{V_T}$, then
 \begin{equation}
  \label{eq:thm4}
  \big\lvert V_T^{k+1}\np{x_T} - V_T^{k+1}(y_T) \rvert \leq \frac{\epsilon}{3} \ \text{and} \ \lvert V_T\np{y_T} - V_T(x_T) \rvert \leq \frac{\epsilon}{3} \; .
 \end{equation}
 Now, as $\np{V_T^k}_{k\in \N}$ converges uniformly to $V_t^*$ on the compact $C_t \subset \dom V_T$, there exists a rank $k_T \in \N$ such that, if $k\geq k_T$, then for all $x_T \in K_T$,
 \begin{equation}
  \label{eq:thm5}
  \lvert V_T^{k+1}\np{x_T} - V_T^*\np{x_T} \rvert \leq \frac{\epsilon}{3}.
 \end{equation}

 \noindent $\bullet$ Equicontinuity and uniform convergence, case
 $\mathbf{t\in \ce{0,T-1}}$.  The sequences $\np{V_t^k}_{k\in \N}$,
 resp. $\np{\B_t\np{V_{t+1}^k}}_{k\in \N}$ are uniformly equicontinuous on the
 compact $C_t \subset \dom\np{V_t}$. There exists a radius $r_t \in \Q_+^*$ such
 that for every $x_t \in C_t$, if $y_t \in B\np{x_t,r_t} \cap \dom V_t$, then
 for every $k\in \N$,
 \begin{equation}
  \label{eq:thm6}
  \lvert V_t^{k+1}(x_t) - V_t^{k+1}(y_t) \rvert \leq \frac{\epsilon}{4} \quad \text{and}
  \quad \lvert \B_t\np{V_{t+1}^{k+1}(y_t)} - \B_t\np{V_{t+1}^{k+1}(x_t)} \rvert \leq \frac{\epsilon}{4} \eqfinp
 \end{equation}
 By uniform convergence of the sequence $\np{V_t^k}_{k\in \N}$ (resp. $\B_t\np{V_{t+1}^k}_{k\in \N}$) to $V_t^*$ (resp. to $\B_t\np{V_{t+1}^*}$) on the compact $C_t \subset \dom(V_t)$, there exists a rank $k_t \in \N$ such that, if $k\geq k_t$, then for every $x_t \in K$
 \begin{equation}
  \label{eq:thm7}
  \lvert V_t^*(x_t) - V_t^{k+1}(x_t) \rvert \leq \frac{\epsilon}{4} \ \text{and} \ \lvert \B_t\np{V_{t+1}^{k+1}}(x_t) - \B_t\np{V_{t+1}^*}(x_t)\rvert \leq \frac{\epsilon}{4}
 \end{equation}

 \noindent $\bullet$ There exists a draw $x_t^{k^*}$ of the sequence of trial points $\np{x_t^k}_{k\in \N}$ arbitrarily close to any given point of $K^*$.
 Throughout the remainder of the proof, we fix ranks $k_t \in \N$ and radii $r_t \in \Q_+^*$ defined in Step $2$ and set
 \[
  \overline{k} := \max_{t\in \ce{0,T}} k_t \in \N \ \text{and} \ \underline{r} := \min_{t\in \ce{0,T}} r_t \; .
 \]
 By the Trial point oracle assumption, there exists $r \in \Q_+^*$ such that, for every $x\in C$,
 \begin{equation}
  \label{eq:thm8}
  \Pro\nc{x \in K^* + r\mathbb{B} \Rightarrow x \in \limsup_{k\in \N} B\np{x^k, \underline{r}/2}} = 1 \; .
 \end{equation}
 Now, fix  $x\in C \cap \np{\Q^n}^{T+1}$. By Equation~\eqref{eq:thm8}, $\Pro$-a.s., if $x\in K^* + r\mathbb{B}$ then $x \in \limsup_{k\in \N} B\np{x^k, \underline{r}/2}$, so  by Lemma~\ref{lemmatrial}, $\np{x^k}_k \in B(x, \underline{r})$ infinitely often. Hence, $\Pro$-a.s., if $x\in K^* + r\mathbb{B}$, then there exists $k^* \geq \overline{k}$ such that
 \begin{equation}
  \label{eq:thm9}
  x^{k^*} \in B(x, \underline{r}).
 \end{equation}

 \noindent $\bullet$ Conclusion.
 When $t = T$, by triangle inequality, $\Pro$-a.s. we have that
 \begin{align*}
  \Delta(x_T)
   & \leq \underbrace{\lvert V_T^*\np{x_T} - V_T^{k^*+1}\np{x_T} \rvert}_{\leq \epsilon/3 \ \text{by \eqref{eq:thm5} and \eqref{eq:thm9}} } + \underbrace{\lvert V_T^{k^*+1}\np{x_T} - V_T^{k^*+1}\np{x_T^{k^*}}\rvert}_{\leq \epsilon/3 \ \text{by \eqref{eq:thm4}}}
  \\
   & \quad + \underbrace{\lvert V_T^{k^*+1}\np{x_T^{k^*}} - V_T\np{x_T^{k^*}} \rvert}_{= 0 \ \text{by Tightness Lemma~\ref{ProprietesFaciles}-\eqref{proofitemd}}}                                                                                                    
  + \underbrace{\lvert V_T\np{x_T^{k^*}} - V_T\np{x_T} \rvert}_{\leq \epsilon/3 \ \text{by \eqref{eq:thm5} and \eqref{eq:thm9}} }
  \\
   & \leq \epsilon \; .
 \end{align*}
 When $t \in \ce{0,T-1}$, by triangle inequality, $\Pro$-a.s. we have that
 \begin{align*}
  \Delta(x_t)
   & \leq \underbrace{\lvert V_t^*(x_t) - V_t^{k+1}(x_t) \rvert}_{\leq \epsilon/4 \ \text{by \eqref{eq:thm7}}} + \underbrace{\lvert V_t^{k+1}(x_t) - V_t^{k+1}(x_t^k) \rvert}_{\leq \epsilon/4 \ \text{by \eqref{eq:thm6} and \eqref{eq:thm9}} }
  \\
   & \quad + \underbrace{\lvert V_t^{k+1}(x_t^k) - \B_t\np{V_{t+1}^{k+1}}(x_t^k)\rvert}_{= \ 0 \ \text{by Lemma~\ref{ProprietesFaciles}-\eqref{proofitemc}}}
  \\
   & \quad + \underbrace{\lvert  \B_t\np{V_{t+1}^{k+1}}(x_t^k) - \B_t\np{V_{t+1}^{k+1}}(x_t) \rvert}_{\leq \epsilon/4 \ \text{by \eqref{eq:thm6} and \eqref{eq:thm9}} } + \underbrace{\lvert \B_t\np{V_{t+1}^{k+1}}(x_t)- \B_t\np{V_{t+1}^{*}}(x_t) \rvert}_{\leq \epsilon/4 \ \text{by \eqref{eq:thm7}} }
  \\
   & \leq \epsilon \; .
 \end{align*}
 Thus, we have shown Equation~\eqref{eq:thm3}, \emph{i.e.} $\Pro$-a.s., for every $t\in \ce{0,T}$ we have $V_t^* = \B_t\np{V_{t+1}^*}$ on $K_t^*$.
 The sequence $\left( V_t^* \right)_{k\in \N}$
 satisfies the restricted Bellman Equation~\eqref{RestrictedBellmanEquation} with the sequence
 $\left( K_t^* \right)_{k\in \N}$. The conclusion follows from the Uniqueness lemma (Lemma~\ref{UnicityBellman}).
\end{proof}



\section{SDDP selection function: lower approximations in the linear-convex framework}
\label{SDDP_Example}
We will show that our framework contains a similar framework of (the
deterministic version of) the SDDP algorithm as described in \cite{Gi.Le.Ph2015}
and yields the same result of convergence. Let $\X = \R^n $ be a continuous
state space and $\U = \R^m$ a continuous control space. We want to solve the
following problem
\begin{equation}
 \label{pb:linear-convex}
 \begin{aligned}
  \min_{\substack{x=\np{x_0, \ldots, x_T}                                 \\u = \np{u_0, \ldots u_{T-1}}}} & \sum_{t=0}^{T-1} c_t (x_t, u_t) + \psi(x_T) \\ \text{s.t.} \  & x_0 \in \X \ \text{is given,}                        \\
   & \forall t \in \ce{0,T}, \ x_t \in  \X,                               \\
   & \forall t \in \ce{0,T-1}, \ u_t \in \U,  \, x_{t+1} = f_t(x_t, u_t).
 \end{aligned}
\end{equation}

We make similar assumptions as in the literature of SDDP (\emph{e.g.}~\cite{Gi.Le.Ph2015}), note that in our formulation, we have put the constraints on the states and controls on the cost functions. We refer to~\cite{Au.Ek1984} and~\cite{Ro.We2009} for results on set-valued mappings.

\begin{assumption}
 \label{hypo:linear-convex}
 For all $t \in \ce{0,T-1}$ we assume that:
 \begin{enumerate}
  \item The dynamic $f_t : \X \times \U \longrightarrow \X$ is linear, $f_t(x, u) = A_t x + B_t u$, for some given matrices $A_t$ and $B_t$ of compatible dimensions.
  \item\label{hypo:convexcosts} The cost function $c_t : \X \times \U \longrightarrow \overline{\R}$ is a proper lower semicontinuous (l.s.c.) convex function which is $L_{c_t}$-Lipschitz continuous on its (convex) domain, $\dom\np{c_t}$.
  \item The projection on $\X$ of $\dom\np{c_t}$, denoted $X_t$, is a convex polytope with non-empty interior.
  \item\label{hypo:multiapplication} Define the set-valued mapping $U_t : \X \rightrightarrows \U$, for every $x\in \X$
        \[
         U_t\np{x} := \left\{ u \in \U \mid \np{x,u} \in \dom\np{c_t} \ \text{and} \ f_t\np{x,u} \in X_{t+1}\right\},
        \]
        where we assume that

        \noindent $\bullet$ For every $x\in X_t$, $U_t(x)$ is compact.

        \noindent $\bullet$ The graph of the set-valued mapping $U_t$ has a non-empty interior.

        \noindent $\bullet$ For every $x\in X_t$, there exists $u \in U_t(x)$.\footnote{known as a \emph{Relatively Complete Recourse} assumption.}

        \noindent $\bullet$ The set-valued mapping $U_t$ is $L_{U_t}$-\emph{Lipschitz continuous}\footnote{For all $x,x' \in \X$, $U_t(x') \subset U_t(x) + L_{U_t} \lVert x' - x \rVert$.} (hence, both upper and lower semicontinuous).
 \end{enumerate}
 Moreover, at time $t = T$, we assume that $X_T := \dom\np{V_T}\subset \X$ is
 convex and compact with non-empty interior, the final cost function
 $\psi : \X \longrightarrow \overline{\R}$ is a proper convex l.s.c. function with
 known compact convex domain and $\psi$ is $C_T$-Lipschitz continuous on its
 domain.
\end{assumption}

\begin{remark}
 Under Assumption~\ref{hypo:linear-convex}, the graph of the set-valued mapping $U_t$ is convex, and its domain is $X_t$ by the RCR assumption.
\end{remark}

\begin{remark}
 A sufficient condition to ensure that the set-valued mapping $U_t$ is Lipschitz
 continuous is given in~\cite[Example 9.35]{Ro.We2009}: $U_t$ is Lipschitz when
 its graph is convex polyhedral, which is the classical framework of
 SDDP. Moreover a Lipschitz constant can be explicitly computed.
\end{remark}

For every time step $t\in \ce{0,T-1}$, recall the \emph{Bellman operator}
$\B_t$ for every function $\func : \X \rightarrow \overline{\R}$ by:
\begin{equation}
 \label{Bellman:linear-convex}
 \B_t(\func ) := \inf_{u \in \U}\Bp{ c_t \np{\cdot, u} + \func\bp{f_t\np{\cdot, u}}}  \eqfinp
\end{equation}
Moreover, for every function $\func : \X \rightarrow \overline{\R}$ and every $\np{ x, u } \in \X \times \U$ we define
\begin{equation}
 \B_t^u\left( \func \right)(x) := c_t\left(x,u\right) + \func\bp{f_t\np{x, u}} \in \overline{\R} \eqfinp
\end{equation}
The Bellman equations of Problem \eqref{pb:linear-convex} can be written using
the Bellman operators $\B_t$ given by Equation~\eqref{Bellman:linear-convex}:
\begin{equation}
 \label{DP:linear-convex}
 V_T  = \psi \quad\text{and}\quad
 \forall t\in \ce{0,T{-}1}, V_t: x \in \X \mapsto  \B_t(V_{t+1})(x) \in \overline{\R}\eqfinp
\end{equation}

In Proposition~\ref{SDDP:stability}, we establish a stability property of the Bellman
operators given by Equation~\eqref{Bellman:linear-convex}. The image of a Lipschitz
continuous function by the operator $\B_t$ will also be Lipschitz continuous and we
give an explicit (conservative) Lipschitz constant.

\begin{proposition}
 \label{SDDP:stability}
 Under Assumption~\ref{hypo:linear-convex}, for every $t\in \ce{0,T-1}$, given a constant
 $L_{t+1} > 0$, there exist a constant $L_t > 0$ such that if
 $\func : \X \to \overline{\R}$ is convex l.s.c. proper with domain $X_{t+1}$ and $L_{t+1}$-Lipschitz continuous on
 $X_{t+1}$ then $\B_t\left( \func \right)$ is convex l.s.c. proper with domain $X_t$ and $L_t$-Lipschitz continuous on
 $X_t$.
\end{proposition}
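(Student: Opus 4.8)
The plan is to bound, for any two points $x,x'\in X_t$, the one-sided increment $\B_t\np{\func}(x')-\B_t\np{\func}(x)$ by $L_t\,\vert x-x'\vert$ for a constant $L_t$ depending only on $L_{t+1}$, $C_t$, the operator norms $\|A_t\|,\|B_t\|$ and a Hoffman constant $\theta_t$; exchanging $x$ and $x'$ then gives the full Lipschitz estimate. First I would fix an optimal control at $x$: since $U_t$ is compact and $u\mapsto c_t(x,u)+\func\np{A_tx+B_tu}$ is lower semicontinuous, the infimum defining $\B_t\np{\func}(x)$ is attained at some $u^\star\in U_t$, and it is finite because Relatively Complete Recourse (\Cref{hypo:linear-convex}-\eqref{rcr-assumption}) supplies an admissible control sending $x$ into $\relint\np{X_{t+1}}$, while $c_t$ and $\func$ are bounded below on the compact sets $X_t\times U_t$ and $X_{t+1}$.

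The core difficulty is that $u^\star$ is generally not admissible at $x'$: even though $A_tx+B_tu^\star\in X_{t+1}$, the shifted point $A_tx'+B_tu^\star$ may leave $X_{t+1}$, where $\func$ carries no Lipschitz (indeed no finite) information. I would therefore repair $u^\star$ into a control $u'$ admissible for $x'$ and close to $u^\star$. Consider the feasible set $\Gamma(x'):=\{u\in U_t\mid A_tx'+B_tu\in X_{t+1}\}$, nonempty by RCR. The generalisation of Hoffman's Lemma in \cite[Theorem 9]{Bu.Ts1996}, applied to this convex system, yields a constant $\theta_t$ and a point $u'\in\Gamma(x')$ with
\[
\vert u'-u^\star\vert \;\le\; \theta_t\, d\np{A_tx'+B_tu^\star,\,X_{t+1}},
\]
where $d$ is the Euclidean distance to a set; here we use that $u^\star$ already satisfies $u^\star\in U_t$, so the sole infeasibility is the membership in $X_{t+1}$. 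The crucial point is that $\theta_t$ can be chosen uniformly over $x'\in X_t$, its uniformity resting on the relative-interior (Slater-type) condition provided by RCR together with the compactness and nonempty relative interior of $U_t$ and $X_{t+1}$.

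Next I would estimate the infeasibility. Since $A_tx+B_tu^\star\in X_{t+1}$,
\[
d\np{A_tx'+B_tu^\star,\,X_{t+1}} \;\le\; \vert A_t(x'-x)\vert \;\le\; \|A_t\|\,\vert x-x'\vert,
\]
so $\vert u'-u^\star\vert\le \theta_t\|A_t\|\,\vert x-x'\vert$. Using $u'$ as an admissible, merely sub-optimal, control at $x'$ and the optimality of $u^\star$ at $x$, and noting that the four arguments $(x',u')$, $(x,u^\star)$, $A_tx'+B_tu'$, $A_tx+B_tu^\star$ all lie in the respective Lipschitz domains $X_t\times U_t$ and $X_{t+1}$, we get
\[
\B_t\np{\func}(x')-\B_t\np{\func}(x)\le \bp{c_t(x',u')-c_t(x,u^\star)}+\bp{\func\np{A_tx'+B_tu'}-\func\np{A_tx+B_tu^\star}}.
\]
Bounding the first bracket by $C_t\np{\vert x-x'\vert+\vert u'-u^\star\vert}$ and the second by $L_{t+1}\np{\|A_t\|\,\vert x-x'\vert+\|B_t\|\,\vert u'-u^\star\vert}$, then substituting the estimate on $\vert u'-u^\star\vert$, produces a bound of the form $L_t\vert x-x'\vert$ with $L_t:=\bp{C_t+L_{t+1}\|A_t\|}+\bp{C_t+L_{t+1}\|B_t\|}\theta_t\|A_t\|$, which is independent of $\func$ and of the chosen points. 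Swapping $x$ and $x'$ yields the absolute value, proving $\B_t\np{\func}$ is $L_t$-Lipschitz on $X_t$.

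The main obstacle is the second paragraph: producing the nearby feasible control $u'$ with a Hoffman constant $\theta_t$ that is uniform over all $x'\in X_t$. This is precisely where the generalised Hoffman lemma and the RCR assumption are indispensable --- RCR furnishes the Slater-type qualification needed to keep the error-bound constant from blowing up as $x'$ approaches the boundary of $X_t$, and compactness turns a pointwise qualification into a uniform one. The remaining computations are the routine Lipschitz bookkeeping sketched above.
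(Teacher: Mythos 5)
Your proof is correct and follows essentially the same route as the paper's own argument: fix an optimal control $u_x$ at $x$ (attained by compactness of $U_t$ and lower semicontinuity, finite by RCR), repair it into a control feasible at $x'$ at distance at most a Hoffman constant times $\Vert A_t\Vert\,\Vert x-x'\Vert$ via the Burke--Tseng generalisation of Hoffman's lemma, and finish with the $C_t$/$L_{t+1}$ Lipschitz bookkeeping and symmetry, arriving at a constant of exactly the same form as \cref{Lt-value}. Your single deviation is a refinement rather than a different route: you keep the constraint $u\in U_t$ inside the system to which the error bound is applied (at the acknowledged price of a Slater-type qualification for the intersected system), whereas the paper applies Hoffman's lemma to $\ba{u \mid A_tx'+B_tu\in X_{t+1}}$ alone and then uses $\B_t\np{\func}(x')\le \B_t^u\np{\func}(x')$ and the Lipschitz bound on $c_t$ for such $u$ --- steps that implicitly require $u\in U_t$ --- so your variant, if anything, closes a small imprecision in the paper's proof.
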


\begin{proof}[Proof of Proposition~\ref{SDDP:stability}]
 Fix $t\in \ce{0,T-1}$ and let $\func : \X \to \overline{\R}$ be a convex l.s.c.
 proper function with domain $X_{t+1}$ and $L_{t+1}$-Lipschitz continuous
 function on $X_{t+1}$. We show that $\dom\bp{\B_t\np{\func}} = X_t$. Let
 $x_t \in X_t$ be arbitrary. By the RCR Assumption, there exist
 $u_t \in U_t\np{x_t}$ such that $f_t \left( x_t, u_t \right) \in X_{t+1}$ and
 $\np{x_t,u_t} \in \dom\np{c_t}$. As the domain of $\func$ is $X_{t+1}$, we
 have that
 \[
  \inf_{u \in \U} \Bp{c_t\np{x_t,u} + \func\bp{f_t\np{x_t,u}}} \leq c_t\left(x_t,u_t\right) + \func \bp{f_t\np{x_t,u_t}} < +\infty \eqfinp
 \]
 Thus, we have shown that $\dom \bp{ \B_t\np{ \func}}$ includes $X_t$. Conversely, if $x\notin X_t$, then for every $u\in \U$, we have $c_t\np{x,u} = +\infty$, hence $x\notin \dom\np{\B_t\np{\func}}$. This implies that $\dom\bp{\B_t\np{\func}} \subset X_t$ and the equality follows.

 Moreover, the above infimum can be restricted to $U_t\np{x}$, which is compact. As the function $x \mapsto \B_t\left( \func \right)(x)$ is convex (resp. l.s.c.) on $X_{t}$ as $\np{x,u} \mapsto  c_t\left(x,u\right) + \func\left(f_t\left(x,u\right)\right)$ is jointly convex (resp. l.s.c. and $U_t(x)$ is compact).

 Since $c_t(x, \cdot)$, $\func$ are l.s.c. and $f_t\np{x, \cdot}$ is continuous, the above infimum is attained. We will denote by $u_x \in U_t\np{x}$ a minimizer, note that $f_t\np{x,u_x} \in X_{t+1}$.

 We finally show that the function $\B_t\left( \func \right)$ is Lipschitz on $X_t$ with a constant $L_t> 0$ that only depends on the data of Problem \eqref{pb:linear-convex}. Fix $x, x' \in X_t$ and denote by $u_{x'} \in U_t\np{x'}$ an optimal control at $x'$, \emph{i.e.} $\B_t^{u_{x'}}\np{\func}\np{x'}= \B_t\np{\func}\np{x'}$. For every $u \in U_t\np{x}$, we have that
 \begin{align}
  \B_t\np{\func}\np{x}
   & \leq \B_t\np{\func}\np{x'} + \B_t^u\np{\func}\np{x} - \B_t\np{\func}\np{x'}
  \notag
  \\
   & = \B_t\np{\func}\np{x'} + \bp{c_t\np{x,u} - c_t\np{x',u_{x'}}} + \Bp{\func\bp{f_t\np{x,u}} - \func\bp{f_t\np{x',u_{x'}}}}
  \notag
  \\
   & \leq \B_t\np{\func}\np{x'} + L_{c_t}\np{ \lVert x - x' \rVert + \lVert u - u_{x'} \rVert}
  \label{tmp123456789}                                                                                                               \\
   & \quad + L_{t+1} \Bp{\lambdamax\np{A_t^TA_t}^{1/2}\lVert x - x' \rVert + \lambdamax\np{B_t^TB_t}^{1/2}\lVert u - u_{x'} \rVert }
  \notag.
 \end{align}
 Indeed, as the domain of $c_t(x, \cdot)$ is $U_t(x)$, the domain of $\func$ is $X_{t+1}$ and that for every $u\in U_t(x)$, we have $f_t\np{x,u} \in X_{t+1}$, Equation~\eqref{tmp123456789} holds for every $u\in U_t(x)$.

 Now, we will bound from above $\lVert u - u_{x'} \rVert$ by $\lVert x - x' \rVert$ multiplied by a constant.
 By Assumption~\ref{hypo:linear-convex}-\eqref{hypo:multiapplication} the set-valued mapping $U_t$ is $L_{U_t}$-Lipschitz on its domain $X_t$. Hence, by definition, there exists $\tilde{u} \in U_t\np{x}$ such that:
 \begin{equation}
  \lVert \tilde{u} - u_{x'} \rVert \leq L_{U_t} \lVert x - x' \rVert. \label{tmp:LipschitzIntersection}
 \end{equation}
 Replacing $u$ by $\tilde{u}$ in Equation~\eqref{tmp123456789}, by Equation~\eqref{tmp:LipschitzIntersection} we deduce that
 \(
 \B_t\np{\func}\np{x} - \B_t\np{\func}\np{x'} \leq L_{t} \lVert x - x' \rVert,
 \)
 where the Lipschitz constant $L_{t}>0$ only depends on the data of Problem \eqref{pb:linear-convex}. \emph{Mutatis mutandis}, we have that
 \(
 \B_t\np{\func}\np{x'} - \B_t\np{\func}(x) \leq L_{t} \lVert x - x' \rVert,
 \)
 and the result follows.
\end{proof}

\begin{remark}
 Knowing the value function at time $t = T$, by
 Proposition~\ref{SDDP:stability} we can compute recursively backward in time
 the domain of $V_t$ for each $t < T$: it is equal to the projection on $\X$ of
 the domain of the cost function, which is $X_t$ and known to the decision
 maker. Moreover using Proposition~\ref{SDDP:stability} we have that, for every
 $t$, the value function $V_t$ is convex l.s.c. proper and Lipschitz continuous on
 its domain, with a computable constant.
\end{remark}

As lower semicontinuous proper convex functions can be approximated by a supremum of affine function, for every $t\in \ce{0,T}$ we define $\Funcb_t^{\SDDP}$ to be the set of affine functions $\func : x \in \X \mapsto \langle a, x \rangle + b \in \R$, $a \in \X$, $b\in \R$ with $\Vert a \Vert_2 \leq L_t$ if $x \in X_t$ and $+\infty$ otherwise. Moreover, we shall denote by $\Funcbb_t^{\SDDP}$ the set of convex functions $\func : \X \mapsto \overline{\R}$ which are $L_t$-Lipschitz continuous on $X_t$, of domain $X_t$ and proper.

\begin{proposition}
 \label{propStructurale_SDDP}
 Under Assumption~\ref{hypo:linear-convex}, the Problem~\ref{pb:linear-convex} and the
 Bellman operators defined in Equation~\eqref{DP:linear-convex} satisfy the structural
 assumptions given in Assumption~\ref{Assumptions}.
\end{proposition}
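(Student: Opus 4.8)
The plan is to verify, one by one, the nine items of \Cref{Assumptions} for the choice $\opt = \sup$, the sets $\Funcb_t^{\text{SDDP}}$, $\Funcbb_t^{\text{SDDP}}$ and the operators $\B_t$ of \Cref{Bellman:linear-convex}, where the Lipschitz constants $L_t$ are those produced by the backward recursion of \Cref{SDDP:stability} started at $L_T = C_T$. Throughout I adopt the convention that the state constraints are folded into the stage costs, i.e. $c_t(x,u) = +\infty$ whenever $x \notin X_t$; together with $\dom(\psi) = X_T$ and the Relatively Complete Recourse assumption \Cref{hypo:linear-convex}-\eqref{rcr-assumption}, this yields $\dom(V_t) = X_t$ for every $t$, a fact I would establish first by backward induction and then reuse repeatedly. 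The decisive stability item is already available: \Cref{SDDP:stability} states precisely that $\B_t$ sends a convex $L_{t+1}$-Lipschitz function on $X_{t+1}$ to a convex $L_t$-Lipschitz function on $X_t$, so most of the remaining work is to package it and to dispatch the more routine items.

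I would first clear the elementary items. For common regularity, since every element of $\Funcbb_t^{\text{SDDP}}$ is $L_t$-Lipschitz on $X_t = \dom(V_t)$, the linear modulus $\omega_{t,x}(r) := L_t r$ works uniformly, independently of $x$. Stability by pointwise optimum holds because a supremum of functions from $\Funcb_t^{\text{SDDP}}$ is convex, is $L_t$-Lipschitz on $X_t$ as a supremum of functions sharing that constant, and has domain $X_t$ since each summand is $+\infty$ off $X_t$, hence lies in $\Funcbb_t^{\text{SDDP}}$. Stability by pointwise convergence follows because convexity and the $L_t$-Lipschitz bound both pass to pointwise limits on $X_t$. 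Order preservation is immediate from \Cref{Bellman:linear-convex}, as $\func \mapsto c_t(\cdot,u) + \func(f_t(\cdot,u))$ is monotone for each $u$ and the infimum preserves the order. Finally, a direct computation gives $\B_t(\func + \lambda) = \B_t(\func) + \lambda$ for every constant $\lambda$, which settles additive subhomogeneity with $M_t = 1$ (indeed with equality); existence of the value functions then reduces to propagating properness backward, using that a proper convex lsc cost, bounded below on the compact $X_t \times U_t$, added to $V_{t+1}$ bounded below on the compact $X_{t+1}$, stays bounded below, while \Cref{SDDP:stability} and RCR keep $V_t$ finite on $X_t$.

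For the final condition, I would invoke the fact that a proper convex lsc function that is $C_T$-Lipschitz on $X_T$ equals the supremum of its affine minorants, each of which can be taken with slope of norm at most $C_T = L_T$ because the subdifferential of an $L_T$-Lipschitz convex function lies in the ball of radius $L_T$; restricting these minorants to $X_T$ places them in $\Funcb_T^{\text{SDDP}}$ and exhibits $\psi = \vopt_{\Func_T}$. Stability by the Bellman operators is then \Cref{SDDP:stability} verbatim for the Lipschitz-convexity part, complemented by the domain identity $\dom(\B_t(\func)) = X_t$ coming from the convention on $c_t$ and from RCR (which guarantees a feasible control over $X_t$), so that $\B_t(\func) \in \Funcbb_t^{\text{SDDP}}$.

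The main obstacle, and the only item requiring a genuine construction, is the existence of optimal sets. Given a compact $K_t \subset \dom(V_t) = X_t$, a function $\func \in \Funcbb_{t+1}^{\text{SDDP}}$ and $\lambda \in \R$, I would take
\[
 K_{t+1} := f_t\np{K_t \times U_t} \cap X_{t+1},
\]
which is compact as the intersection of the continuous image of the compact $K_t \times U_t$ with the compact $X_{t+1}$, and satisfies $K_{t+1} \subset \dom(V_{t+1})$. The key point is that $\dom(\func) = X_{t+1}$, so for $x \in K_t$ only controls $u \in U_t$ with $f_t(x,u) \in X_{t+1}$, hence $f_t(x,u) \in K_{t+1}$, contribute a finite value to the infimum defining $\B_t(\func + \lambda)(x)$; adding $\indi{K_{t+1}}$ to $\func$ therefore leaves that infimum unchanged on $K_t$, giving $\B_t(\func + \lambda + \indi{K_{t+1}}) + \indi{K_t} = \B_t(\func + \lambda) + \indi{K_t}$, which is stronger than the required inequality. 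I expect the only delicate bookkeeping, here and in the stability and existence items, to be the consistent tracking of the identities $\dom(V_t) = X_t$; once they are fixed, all verifications close.
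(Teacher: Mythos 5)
Your proposal is correct, and its overall architecture is the same as the paper's: an item-by-item verification in which \Cref{SDDP:stability} carries the stability item, the final condition is handled by writing $\psi$ as a supremum of affine minorants with slopes bounded by $L_T$, and order preservation and additive homogeneity (with $M_t = 1$) follow from direct computation. The genuine divergence is in the one nontrivial item, existence of optimal sets. The paper fixes $x_t \in K_t$, argues via lower semicontinuity, convexity and compactness of $U_t$ that the argmin set $U_t^*$ of $u \mapsto c_t(x_t,u) + \tilde{\func}(f_t(x_t,u))$ is nonempty and compact, and sets $K_{t+1} := f_t(X_t, U_t^*)$; this leaves a delicate point unaddressed, namely that $U_t^*$ depends on $x_t$, so compactness of the union of argmin images over all states requires an upper semicontinuity argument the paper glosses over. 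Your construction $K_{t+1} := f_t\np{K_t \times U_t} \cap X_{t+1}$ sidesteps this entirely: compactness is immediate, and the identity $\B_t(\func + \lambda + \indi{K_{t+1}}) + \indi{K_t} = \B_t(\func + \lambda) + \indi{K_t}$ follows purely from the fact that elements of $\Funcbb_{t+1}^{\text{SDDP}}$ have domain exactly $X_{t+1}$, so controls steering outside $X_{t+1}$ contribute $+\infty$ with or without the indicator. This is simpler and arguably more robust than the paper's argument, at the price of a larger (but still admissible) set $K_{t+1}$. Two further points in your favor: you explicitly verify stability by pointwise convergence, an item of \Cref{Assumptions} that the paper's proof silently skips, and you make explicit the convention folding the state constraints into $c_t$ so that $\dom(V_t) = X_t$, a bookkeeping identity the paper uses implicitly throughout.
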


\begin{proof} We prove successively each assumption listed in Assumption~\ref{Assumptions}.

 \noindent
 $\bullet$\ref{Assumptions}-\eqref{Stability-pointwise-optimum}.
 Recall that we are here on the case $\optop = \sup$.
 Fix $t\in \ce{0,T}$ and let $\Func \subset \Funcb^{\SDDP}_t$ be a set of
 affine $L_t$-Lipschitz continuous functions with domain $X_t$.  For every
 $x, x' \in X_t$, we have that
 \[
  \vert \vopt_\Func(x) - \vopt_\Func\left(x'\right) \vert
  = \vert \sup_{\func \in F} \func(x) - \sup_{\func \in F} \func(x') \vert
  \leq \sup_{\func \in F} \vert \func(x) - \func(x') \vert \leq L_t\Vert x - x' \Vert.
 \]
 Thus, the function $\vopt_{\Func}$ is $L_t$-Lipschitz continuous. As a supremum of affine functions is convex and l.s.c.,
 $\vopt_\Func$ is also convex and l.s.c., we have thus shown that $\vopt_\Func \in \Funcbb_{t}^{\SDDP}$.

 \noindent $\bullet$\ref{Assumptions}-\eqref{Stability-pointwise-convergence} and
 \ref{Assumptions}-\eqref{CommonRegularity}.
 By construction, for all $t\in \ce{0,T}$, every element of $\Funcbb_t^{\SDDP}$ is $L_t$-Lipschitz continuous. Thus, by the previous point, $\Funcbb_t^{\SDDP}$ is also stable by pointwise convergence.

 \noindent $\bullet$\ref{Assumptions}-\eqref{Final-condition}. 
 As $\psi$ is convex proper and $L_T$-Lipschitz continuous on $X_T$, it is a countable (as $\R^n$ is separable) supremum of $L_T$-Lipschitz affine functions.

 \noindent $\bullet$\ref{Assumptions}-\eqref{StabilityBellman}. 
 This has been shown in Proposition~\ref{SDDP:stability}.

 \noindent $\bullet$\ref{Assumptions}-\eqref{order-preserving}. 
 Let $\phi_1$ and
 $\phi_2$ be two functions over $\X$ such that $\phi_1 \leq \phi_2$ \emph{i.e.}
 for every $x\in \X$, we have $\phi_1(x) \leq \phi_2(x)$. We want to show that
 $\B_t\left( \phi_1 \right) \leq \B_t\left(\phi_2 \right)$. Let $x\in \X$, we
 have:
 \begin{align*}
  \B_t\left( \phi_1\right)(x)
   & = \inf_{u\in \U} c_t(x,u) + \phi_1\left( f_t(x,u) \right)     \\
   & \leq \inf_{u\in \U} c_t(x,u) + \phi_2 \left( f_t(x,u) \right) \\
   & = \B_t \left( \phi_2 \right)(x).
 \end{align*}
 \noindent $\bullet$\ref{Assumptions}-\eqref{Additively-subhomogeneous}. 
 We will
 show that $\B_t$ is additively homogeneous, hence one can choose $M_t=1$ in
 Assumption~\ref{Assumptions}-\eqref{Additively-subhomogeneous}.  Let $\lambda \in \R$ be
 a given constant and $\func$ a given function in $\Funcbb_{t+1}$. We identify
 the constant $\lambda$ with the constant function
 $\lambda : x \mapsto \lambda$ and we have for all $x\in \X$:
 \begin{align*}
  \B_t\left( \lambda + \phi \right)(x)
   & = \inf_{u\in \U} \Bp{c_t(x,u) + \np{\lambda + \phi}\bp{f_t(x,u)}}
  \\
   & = \inf_{u\in \U} \Bp{ c_t(x,u) +  \lambda + \phi\bp{f_t(x,u)}}
  \\
   & = \lambda + \inf_{u\in \U} \Bp{ c_t(x,u) + \phi\bp{f_t(x,u)}}
  \\
   & = \lambda + \B_t\np{\phi}(x).
 \end{align*}

 \noindent $\bullet$ \ref{Assumptions}-\eqref{proper-value}. 
 By backward recursion on
 time step $t \in \ce{0,T}$ and by Proposition~\ref{SDDP:stability}, for every time step
 $t\in \ce{0,T}$ the function $V_t$ given by the Dynamic Programming
 Equation~\eqref{DP:linear-convex} is convex and $L_t$-Lipschitz continuous on $X_t$.

 \noindent $\bullet$\ref{Assumptions}-\eqref{optimal-sets}. 
 Fix $t\in \ce{0,T-1}$, an
 arbitrary element $\func \in \Funcbb_t^{\SDDP}$, a constant
 $\lambda \geq 0$ and set $\tilde{\func} := \func + \lambda$. We will show that
 for every compact set $K_t \subset X_t$, there exist a compact set
 $K_{t+1} \subset X_{t+1}$ such that
 \begin{equation}
  \label{tmpwololo}
  \B_t \left( \tilde{\func} + \indi{K_{t+1}} \right) + \indi{K_t} = \B_t \left( \tilde{\func} \right) + \indi{K_t},
 \end{equation}
 which will imply the desired result.
 Now, Equation~\eqref{tmpwololo} is equivalent to the fact that for every state $x_t \in K_t$, there exist a control $u_t \in U_t\np{x}$ such that
 \begin{equation*}
  f_t \np{ x_t, u_t } \in K_{t+1}
  \quad\text{where}\quad
  u_t\in \argmin_{u\in U_t\np{x}} \B_t^u \np{\tilde{\func}}\np{x_t} = c_t \np{x_t, u } + \tilde{\func}\bp{ f_t \np{x_t, u}}
  \eqfinp
 \end{equation*}
 Set $K_{t+1} := f_t\np{X_t, U_t\np{X_t}}$, it satisfies Equation~\eqref{tmpwololo}, we show that it is compact. As $X_t$ is compact and $f_t$ is continuous, it is sufficient to prove that $U_t\np{X_t}$ is compact, which is true as $U_t$ is upper semicontinuous (u.s.c.) and non-empty compact valued, see~\cite[Proposition 11 p.112]{Au.Ek1984}.
 This ends the proof
\end{proof}

Now, we define a compatible selection function for $\optop=\sup$.
Let $t\in \ce{0,T-1}$ be fixed, for any $\Func \subset \Funcb_t^{\SDDP}$ and $x\in \X$, we define the following optimization problem
\begin{subequations}
 \begin{align}
  \label{NodalProblemSDDP}
  \min_{ (x',u,\lambda) \in X_t \times U_t(x') \times  \R}
   & c_t\np{ x', u} + \lambda \\
  \text{s.t.}
   & \quad
  x' = x \quad\text{and}\quad \func\bp{f_t\np{ x', u}} \leq \lambda \quad \forall \func \in \Func\eqfinp
 \end{align}
\end{subequations}

If we denote by $b$ its optimal value and by $a$ a Lagrange multiplier
associated to the constraint $x'- x = 0$ at the optimum, that is such that
$(x',u;\lambda, a)$ is a stationary point of the Lagrangian
$c_t\left( x', u\right) + \lambda -\langle a , x' - x \rangle$, then we define
\[
 \func_t^{\SDDP}\left( \Func, x \right) := x'\mapsto \langle a , x' - x \rangle + b + \indi{X_t}(x')\enspace.
\]
Finally, at time $t= T$, for any $\Func \subset \Funcb_T^{\SDDP}$ and $x\in \X$, fix $a \in \partial V_T (x)$ and define
\[
 \func_T^{\SDDP}\left( \Func, x \right) := x'\mapsto \langle a , x' - x \rangle + V_T\left( x \right).
\]

\begin{proposition}
 For every time $t\in \ce{0,T}$, the function $\func_t^{\SDDP}$ is a compatible selection function
 for $\optop=\sup$ in the sense of Definition~\ref{det:CompatibleSelection}.
\end{proposition}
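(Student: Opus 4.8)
The plan is to verify the two defining properties of a compatible selection function, namely \emph{validity} and \emph{tightness}, separately for the generic times $t\in\ce{0,T-1}$ and for the terminal time $t=T$, recalling that for $\opt=\sup$ validity means $\func_t^{\text{SDDP}}(\Func,x)\le \B_t(\vopt_\Func)$ everywhere, while tightness means equality at the single point $x$.

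First I would treat the terminal case, which is the easier one. There $\func_T^{\text{SDDP}}(\Func,x)$ is by definition the affine function $x'\mapsto \langle a,x'-x\rangle + V_T(x)$ with $a\in\partial V_T(x)$. Tightness is immediate by evaluating at $x'=x$, which gives $V_T(x)$. Validity $\func_T^{\text{SDDP}}(\Func,x)\le V_T$ is exactly the subgradient inequality for the convex function $V_T=\psi$: since $a\in\partial V_T(x)$, one has $V_T(x')\ge V_T(x)+\langle a,x'-x\rangle$ for all $x'$, and a subgradient exists at every point of the relative interior of $\dom(V_T)=X_T$, so the construction is well posed. Here I would note that one must restrict attention to $x\in X_T$ (or its relative interior) for $\partial V_T(x)$ to be nonempty, which is consistent with the drawn points lying in $\dom(V_T)$.

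Next I would handle $t\in\ce{0,T-1}$, where the selection function is extracted from the Lagrangian of the nodal problem~\eqref{NodalProblemSDDP}. The key identification is that the optimal value $b$ of~\eqref{NodalProblemSDDP} equals $\B_t(\vopt_\Func)(x)$: the epigraph constraints $\func(f_t(x',u))\le\lambda$ for all $\func\in\Func$ together with minimizing $c_t(x',u)+\lambda$ force $\lambda$ down to $\sup_{\func\in\Func}\func(f_t(x',u))=\vopt_\Func(f_t(x',u))$, so after eliminating $\lambda$ the problem reduces to $\min_{u\in U_t} c_t(x,u)+\vopt_\Func(f_t(x,u))=\B_t(\vopt_\Func)(x)$ (using $x'=x$). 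This gives tightness: $\func_t^{\text{SDDP}}(\Func,x)(x)=b=\B_t(\vopt_\Func)(x)$. For validity I would invoke convex duality: treat $x$ as a right-hand-side parameter and define the value function $p(z):=$ optimal value of~\eqref{NodalProblemSDDP} with the constraint $x'=z$. Then $p(z)=\B_t(\vopt_\Func)(z)$ is convex in $z$, and the Lagrange multiplier $a$ of the coupling constraint $x'=x$ is precisely an element of $-\partial p(x)$ up to sign; the affine map $z\mapsto \langle a,z-x\rangle+b$ is then a supporting hyperplane of the convex function $\B_t(\vopt_\Func)$ at $x$, giving $\langle a,z-x\rangle+b\le\B_t(\vopt_\Func)(z)$ for all $z$, which is exactly validity.

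\textbf{The main obstacle} is making the duality argument of the previous paragraph rigorous: I must justify that $\B_t(\vopt_\Func)$ is proper convex (which follows from \Cref{SDDP:stability} and \Cref{propStructurale_SDDP}, since $\vopt_\Func\in\Funcbb_{t+1}^{\text{SDDP}}$ is convex Lipschitz and $\B_t$ preserves this), that strong duality and the existence of a finite multiplier $a$ hold at $x$ (this is where Relatively Complete Recourse, \Cref{hypo:linear-convex}-\eqref{rcr-assumption}, supplies a Slater-type condition ensuring the primal is feasible with a relative-interior point and the dual optimum is attained), and that the sensitivity relation $a\in-\partial p(x)$ is the correct reading of ``Lagrange multiplier of $x'-x=0$.'' A delicate point is checking that the Lipschitz bound $\Vert a\Vert_2\le L_t$ holds so that $\func_t^{\text{SDDP}}(\Func,x)$ genuinely lands in $\Funcb_t^{\text{SDDP}}$ rather than merely being affine; this should follow from the explicit Lipschitz constant~\eqref{Lt-value} computed in \Cref{SDDP:stability}, since any subgradient of an $L_t$-Lipschitz convex function on $X_t$ has norm at most $L_t$. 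Once these duality technicalities are in place, validity and tightness fall out immediately and the proof is complete.
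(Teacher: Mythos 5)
Your proof takes essentially the same route as the paper's: tightness comes from identifying the optimal value $b$ of \eqref{NodalProblemSDDP} with $\B_t\left(\vopt_\Func\right)(x)$ after eliminating $\lambda$, validity from the fact that the Lagrange multiplier $a$ is a subgradient of the convex function $\B_t\left(\vopt_\Func\right)$ at $x$, and the case $t=T$ from convexity of $V_T$. The paper asserts the subgradient property in a single line, whereas you spell out the underlying sensitivity/duality argument and the membership check $\Vert a\Vert_2\le L_t$; this is a correct and welcome expansion of the same argument, not a different approach.
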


\begin{proof}
 Fix $t\in \ce{0,T-1}$, $\Func \subset \Funcb_{t+1}^{\SDDP}$ and $x\in \X$.
 Using Equation~\eqref{Bellman:linear-convex} we obtain that
 $\B_t\left( \vopt_\Func \right) \np{x}$ is equal to $b$ the optimal value of optimization problem~\eqref{NodalProblemSDDP}.
 Thus, since $\func_t^{\SDDP}\left( \Func, x \right)\np{x}=b$ we obtain that the selection function is tight.
 It is also valid as $a$ is a subgradient of the convex function $\B_t\left( V_\Func \right)$ at $x$. For $t=T$, the selection
 function $\func_T^{\SDDP}$ is tight and valid by convexity of $V_T$.
\end{proof}

If we want to apply the convergence result from
Theorem~\ref{ConvergenceTheorem}, as we approximate from below the value
functions ($\optop = \sup$) then one has to make the draws according to some sets
$K_t^k$ such that the sets $K_t^* := \limsup_{k\in \N} K_t^k$ are $V_t^*$
optimal. As done in the literature of the Stochastic Dual Dynamic Programming
algorithm (see for example~\cite{Gi.Le.Ph2015} and~\cite{Zo.Ah.Su2018}
or~\cite{Pe.Pi1991}), one can study the case when the draws are made along the
optimal trajectories of the current approximations.

More precisely, fix $k \in \N$ we define a sequence $(x_0^k, x_1^k, \ldots , x_T^k)$ by
\begin{equation*}
 x_0^k := x_0
 \quad\text{and}\quad
 \forall t \in \ce{0,T-1}, \ x_{t+1}^k :=  f_t \np{ x_t^k, u_t^k}
 \eqsepv
\end{equation*}
where $u_t^k \in \argmin_u \B_t^u\left( V_t^k \right)\left(x_t^k\right)$. We say
that such a sequence $(x_0^k, x_1^k, \ldots , x_T^k)$ is an \emph{optimal
 trajectory for the $k$-th approximations starting from $x_0$}. We show that
optimal trajectories for the current approximations become $\np{V_t^*}$-optimal
when $k$ goes to infinity, using a result of convergence in minimization by
Rockafellar and Wets~\cite[Theorem 7.33]{Ro.We2009}.

\begin{proposition}
 \label{OptimalityOfTrajectories}
 For every $k\in \N$, let $(x_0^k, x_1^k, \ldots , x_T^k)$ be an optimal trajectory for the $k$-th approximations starting from $x_0$ and define a sequence of singletons for every $t\in \ce{0,T}$, $K_t^k := \left\{ x_t^k \right\}$.
 Then the sets $\np{K_t^*}_{t\in \ce{0,T}}$ defined by $K_t^* := \limsup_k K_t^k$ are $\np{V_t^*}$-optimal.
\end{proposition}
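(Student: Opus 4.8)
The plan is to verify the optimality identity \eqref{eq:optimalSets} directly with $X_t=K_t^*$ and $\func_t=V_t^*$. First I would observe that the trajectory points $x_t^k$ all lie in the compact set $X_t=\dom\np{V_t}$, so each $K_t^*=\limsup_k K_t^k$ is a subset of $\dom\np{V_t}$, as required for the statement to make sense. Since $\indi{K_{t+1}^*}\geq 0$ we have $V_{t+1}^*+\indi{K_{t+1}^*}\geq V_{t+1}^*$, and because $\B_t$ is order preserving (\Cref{Assumptions}-\eqref{order-preserving}) this gives $\B_t\np{V_{t+1}^*+\indi{K_{t+1}^*}}\geq \B_t\np{V_{t+1}^*}$ everywhere; hence the inequality ``$\geq$'' in \eqref{eq:optimalSets} holds automatically. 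All the content is therefore to prove, for each fixed $t\in\ce{0,T-1}$ and each $x\in K_t^*$, the reverse inequality $\B_t\np{V_{t+1}^*+\indi{K_{t+1}^*}}(x)\leq \B_t\np{V_{t+1}^*}(x)$, i.e.\ to exhibit a control $u\in U_t$ that is optimal for $\B_t\np{V_{t+1}^*}(x)$ and additionally satisfies $f_t(x,u)\in K_{t+1}^*$.

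Fix such an $x\in K_t^*=\limsup_k\{x_t^k\}$. By definition of the limit superior of sets there is an increasing extraction $\sigma$ with $x_t^{\sigma(k)}\to x$. The associated optimal controls $u_t^{\sigma(k)}$ lie in the compact set $U_t$, so after a further extraction (still written $\sigma$) I may assume $u_t^{\sigma(k)}\to\bar u\in U_t$. Since $f_t$ is linear, hence continuous, the successor states converge, $x_{t+1}^{\sigma(k)}=f_t\np{x_t^{\sigma(k)},u_t^{\sigma(k)}}\to f_t(x,\bar u)=:\bar x_{t+1}$; and as $x_{t+1}^{\sigma(k)}\in K_{t+1}^{\sigma(k)}$ for every $k$, its limit satisfies $\bar x_{t+1}\in\limsup_k K_{t+1}^k=K_{t+1}^*$. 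This already secures the membership $f_t(x,\bar u)\in K_{t+1}^*$.

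It remains to check that $\bar u$ is optimal for $\B_t\np{V_{t+1}^*}(x)$, which I would obtain by passing to the limit in the optimality relation $\B_t\np{V_{t+1}^{\sigma(k)}}\np{x_t^{\sigma(k)}}=c_t\np{x_t^{\sigma(k)},u_t^{\sigma(k)}}+V_{t+1}^{\sigma(k)}\np{x_{t+1}^{\sigma(k)}}$ that defines the trajectory. On the left, \Cref{ConvergenceBellmanImages} gives uniform convergence of $\B_t\np{V_{t+1}^k}$ to the continuous function $\B_t\np{V_{t+1}^*}$ on compacts, so combining uniform convergence with continuity along $x_t^{\sigma(k)}\to x$ yields $\B_t\np{V_{t+1}^{\sigma(k)}}\np{x_t^{\sigma(k)}}\to\B_t\np{V_{t+1}^*}(x)$. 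On the right, $c_t$ is Lipschitz and, by \Cref{ExistenceLimit}, $V_{t+1}^k\to V_{t+1}^*$ uniformly on compacts with continuous limit, so the same uniform-convergence-plus-continuity argument gives $c_t\np{x_t^{\sigma(k)},u_t^{\sigma(k)}}+V_{t+1}^{\sigma(k)}\np{x_{t+1}^{\sigma(k)}}\to c_t(x,\bar u)+V_{t+1}^*\np{\bar x_{t+1}}$. Hence $\B_t\np{V_{t+1}^*}(x)=c_t(x,\bar u)+V_{t+1}^*\np{f_t(x,\bar u)}$, so $\bar u$ attains $\B_t\np{V_{t+1}^*}(x)$. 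Evaluating $\B_t\np{V_{t+1}^*+\indi{K_{t+1}^*}}(x)$ at the control $\bar u$, which is feasible for the restricted infimum precisely because $f_t(x,\bar u)\in K_{t+1}^*$ makes $\indi{K_{t+1}^*}\np{f_t(x,\bar u)}=0$, then gives $\B_t\np{V_{t+1}^*+\indi{K_{t+1}^*}}(x)\leq c_t(x,\bar u)+V_{t+1}^*\np{f_t(x,\bar u)}=\B_t\np{V_{t+1}^*}(x)$, the desired reverse inequality. Adding $\indi{K_t^*}$ and combining with the automatic direction establishes \eqref{eq:optimalSets}.

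The main obstacle is the simultaneous limit passage in the third paragraph: one must let both the functions $V_{t+1}^{\sigma(k)}$ and the evaluation points $x_t^{\sigma(k)},x_{t+1}^{\sigma(k)}$ move at once, which is exactly why the uniform---rather than merely pointwise---convergence supplied by \Cref{ExistenceLimit,ConvergenceBellmanImages}, together with the continuity of the limit functions, is indispensable. A secondary bookkeeping point is the double extraction (first to realize $x$ as a limit of trajectory points, then to extract a convergent subsequence of the controls) and the observation that the successor states land in $K_{t+1}^*$ automatically, since they are produced by $f_t$ from the already-extracted trajectory.
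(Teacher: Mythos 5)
Your proof is correct, and its skeleton is the same as the paper's: for each $x\in K_t^*$, extract a subsequence of trajectory points converging to $x$, extract a further subsequence of the associated optimal controls converging to some $\bar u\in U_t$ (compactness of $U_t$), get feasibility $f_t(x,\bar u)\in K_{t+1}^*$ from continuity of $f_t$, and then show $\bar u$ is optimal for $\B_t\np{V_{t+1}^*}(x)$ by a limit passage. Where you differ is in that last, central step. The paper works in the control variable: it defines $B_t^k(u):=c_t\np{x_t^k,u}+V_{t+1}^k\bp{f_t\np{x_t^k,u}}$, argues that $\np{B_t^k}_k$ converges uniformly on $U_t$ to $B_t^*$, and invokes stability of argmins under uniform convergence; notably, the displayed bound there only accounts for the variation $\Vert x_t^k-x_t^{k'}\Vert$ and silently ignores the discrepancy between $V_{t+1}^k$ and $V_{t+1}^{k'}$, which must be patched with the uniform convergence of \Cref{ExistenceLimit}. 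You instead pass to the limit in the scalar optimality identity $\B_t\np{V_{t+1}^{\sigma(k)}}\np{x_t^{\sigma(k)}}=c_t\np{x_t^{\sigma(k)},u_t^{\sigma(k)}}+V_{t+1}^{\sigma(k)}\np{x_{t+1}^{\sigma(k)}}$, handling the left side with \Cref{ConvergenceBellmanImages} and the right side with \Cref{ExistenceLimit} plus the Lipschitz continuity of $c_t$; this reuses results already established rather than redoing a convergence estimate, and it avoids the gap in the paper's bound. Two further points in your favor: you make explicit the reduction of \eqref{eq:optimalSets} to exhibiting a feasible optimal control (the ``$\geq$'' direction being automatic from \Cref{Assumptions}-\eqref{order-preserving} and $\indi{K_{t+1}^*}\geq 0$), which the paper only sketches in its first step, and you check that the trajectory points, hence $K_t^*$, lie in $\dom\np{V_t}$, a prerequisite for applying both propositions that the paper leaves implicit.
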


\begin{proof}
 Fix $t\in \ce{0,T{-}1}$, we want to show that Equation~\eqref{eq:optimalSets} is satisfied for
 $K_t^*$ which is equivalent to prove that for every $x^*_t \in K_t^*$, we have that
 \begin{equation}
  \label{eq:tmp1}
  \B_t\left( V_{t+1}^* + \indi{K_{t+1}^*} \right)\left(x^*_t\right) = \B_t\left( V_{t+1}^* \right)(x^*_t).
 \end{equation}
 Now, using the definition of the Bellman operators in Equation~\eqref{Bellman:linear-convex} and Equation~\eqref{eq:tmp1}  we have to prove that there exists a control $u^*_t \in U_t(x_t^*)$ such that
 \begin{equation}
  \label{eq:tmp3}
  u^*_t \in \bset{ u}{ f_t \np{x^*_t, u} \in K_{t+1}^*}
  \cap
  \argmin_{u\in U_t(x_t^*) } \Bp{ c_t\np{x_t^*, u} + V_{t+1}^* \bp{ f_t\np{x_t^*, u }}}\eqfinp
 \end{equation}

 Fix $x_t^* \in K_t^*$ and extracting if needed a subsequence, without loss of
 generality, assume that $\np{x_t^k}_{k\in \N}$ converges to $x_t^*$. Fix
 $k\in \N$ and the sequence of controls $\np{u_0^k, \ldots, u_{T-1}^k}$
 associated with the optimal trajectory for the $k$-th approximations
 $\np{x_0^k, \ldots, x_T^k}$. We have that
 \begin{equation}
  \label{eq:control-k-opt}
  u_t^k \in \bset{ u}{f_t\np{x_t^k, u} \in K_{t+1}^k} \cap
  \argmin_{u \in U_t\np{x_t^k}} \Bp{c_t\np{x_t^k, u} + V_{t+1}^k\bp{f_t\np{x_t^k, u}}}
  \eqfinp
 \end{equation}
 Extracting, if needed, a subsequence $\np{u_t^n}_{n\in \N}$ of
 $\np{u_t^k}_{n\in \N}$, we will show that the sequence $\np{u_t^n}_{n\in \N}$ converges to some
 $u_t^* \in \argmin_{u \in U_t(x_t^*)} c_t\np{x_t^*, u} + V_{t+1}^* \np{
   f_t\np{x_t^*,u}}$. Equation~\eqref{eq:tmp3} will be satisfied as for every $n\in \N$,
 $f_t\np{x_t^n, u_t^n} \in K_{t+1}^n$, the continuity of $f_t$ and definition of
 $K_{t+1}^*$ will ensure that
 $u_t^* \in \left\{ u \mid f_t\np{x_t^*, u} \in K_{t+1}^*\right\}$.

 We will use the result of convergence in minimization~\cite[Theorem 7.33]{Ro.We2009}. We define
 \begin{align*}
   & B^k : \U \to \overline{\R}, \ u \mapsto c_t\np{x_t^k, u} + V_{t+1}^k\bp{f_t\np{x_t^k,u}}\eqfinv \\
   & B^* : \U \to \overline{\R}, \ u \mapsto c_t\np{x_t^*, u} + V_{t+1}^*\bp{f_t\np{x_t^*,u}}\eqfinp
 \end{align*}
 Recall that, under Assumption~\ref{hypo:linear-convex}-\eqref{hypo:multiapplication}, the
 set-valued mapping $U_t$ has compact values with non-empty interior and is
 $L_{U_t}$-Lipschitz continuous for some constant $L_F > 0$. Moreover, the
 functions $B^*$ and every $B^k$, $k\in \N$ are convex, l.s.c., proper,
 inf-compact, with compact domains $U_t\np{x_t^*}$ and $U_t\np{x_t^k}$,
 respectively. As $U_t$ is Lipschitz continuous, the sequence of functions
 $\np{B^k}_{k\in \N}$ converges uniformly to $B^*$ on every compact $K$ included
 in the interior of $\dom\np{B^*} = U_t\np{x_t^*}$. Thus, by~\cite[Theorem
  7.17.c]{Ro.We2009}, $\np{B_t^k}_{k\in \N}$ epiconverges to $B^*$. Finally,
 $\np{u_t^k}_{k\in \N} \subset f_t\np{X_t, U_t\np{X_{t}}}$ which is compact as
 $U_t$ is u.s.c. and $f_t$ is continuous. We conclude that we can extract a
 converging subsequence out of $\np{u_t^k}_k$. Denoting by
 $u_t^* \in U_t\np{x_t^*}$ its limit, by~\cite[Theorem 7.33]{Ro.We2009} we
 finally have that
 \(
 u_t^* \in \argmin_{u \in \U} B^*\np{u}
 \). This ends the proof.
\end{proof}

Hence, when applying TDP with the SDDP selection function, we will refine the approximations along the current optimal trajectories, \emph{i.e.} we use Oracle defined in Example~\ref{example:opt_traj}. We conclude this section by proving the convergence of TDP algorithm in the SDDP case.

\begin{theorem}[Lower (outer) approximations of the value functions]
 Under Assumption~\ref{hypo:linear-convex}, for every $t\in \ce{0,T}$, denote by
 $\left( V_t^k \right)_{k\in \N}$ the sequence of functions generated by
 \NomAlgo \ with the selection function $\func_t^{\SDDP}$ and the draws
 made uniformly over the sets $K_t^k$ defined in
 Proposition~\ref{OptimalityOfTrajectories}.  Then, the sequence
 $\left( V_t^k \right)_{k\in \N}$ is non-decreasing, bounded from above by
 $V_t$, and converges uniformly to $V_t^*$ on every compact set included in
 $\dom\left(V_t \right)$. Moreover, almost surely over the draws, $V_t^* = V_t$
 on $\limsup_{k\in \N} K_t^k$.
\end{theorem}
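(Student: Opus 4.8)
The plan is to treat this statement as the SDDP specialization of the general theory: rather than proving anything new, I would verify that every hypothesis required by the abstract results holds for the operators \cref{Bellman:linear-convex} in the case $\opt = \sup$, and then read off each of the four conclusions from an earlier result. The three facts to collect are: \Cref{propStructurale_SDDP}, which says the structural assumptions \Cref{Assumptions} hold for $\Funcb_t^{\text{SDDP}}$ and $\Funcbb_t^{\text{SDDP}}$; the proposition immediately preceding \Cref{OptimalityOfTrajectories}, which says each $\func_t^{\text{SDDP}}$ is a compatible selection function for $\opt = \sup$; and \Cref{OptimalityOfTrajectories}, which says the limsup sets $K_t^* := \limsup_k K_t^k$ obtained from the optimal trajectories are $\np{V_t^*}$-optimal. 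Once these are in place, the run of \Cref{\NomAlgo} with the selection functions $\func_t^{\text{SDDP}}$ and the singleton draws produces precisely the sequence $\np{V_t^k}_{k\in \N}$ of the statement.

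For the first three assertions I would only cite the monotonicity and convergence results specialized to $\opt = \sup$. \Cref{ProprietesFaciles}-\eqref{proofitema} directly gives $V_t^k \leq V_t^{k'} \leq V_t$ for $k < k'$, that is, the sequence is nondecreasing and bounded above by $V_t$. \Cref{ExistenceLimit}, whose ingredients (common $L_t$-Lipschitz regularity of $\Funcbb_t^{\text{SDDP}}$, monotonicity, stability by pointwise convergence) are exactly the items checked in \Cref{propStructurale_SDDP}, then provides a limit $V_t^* \in \Funcbb_t^{\text{SDDP}}$ to which $\np{V_t^k}_k$ converges uniformly on every compact subset of $\dom\np{V_t}$. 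No computation beyond invoking these is needed.

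The final, substantive assertion is an application of \Cref{ConvergenceTheorem}, whose hypothesis in the case $\opt = \sup$ is precisely the $\np{V_t^*}$-optimality of $\np{K_t^*}_{t\in \ce{0,T}}$ supplied by \Cref{OptimalityOfTrajectories}. Here I would emphasize the conceptual asymmetry already signaled before \Cref{UnicityBellman}: for lower approximations one needs optimality only with respect to the \emph{computable} limit $V_t^*$, not the a priori unknown $V_t$, which is exactly what makes drawing along optimal trajectories admissible. The one point not handed over verbatim by a previous result is the verification of \Cref{AssumptionOracle} for these draws; but since each $K_t^k = \{x_t^k\}$ is a singleton and the draw law is the Dirac mass at $x_t^k$, the unique $x = x_t^k \in K_t^k$ satisfies $\mu^k\np{\X^t \times \np{B\np{x,\eta}\cap K_t^k} \times \X^{T-t}} = 1$, so any constant $g_t \in \np{0,1}$ works and the infinite product measure $\mu$ is well defined. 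I expect this last bookkeeping to be the only mildly delicate step; indeed, because the SDDP draws are deterministic here, the ``$\mu$-almost sure'' conclusion of \Cref{ConvergenceTheorem} holds surely, yielding $V_t^* = V_t$ on $\limsup_k K_t^k$ for every $t \in \ce{0,T}$.
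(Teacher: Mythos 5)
Your proposal is correct and takes essentially the same route as the paper, which obtains this theorem exactly by assembling \Cref{propStructurale_SDDP}, the compatibility of the selection functions $\func_t^{\text{SDDP}}$, and \Cref{OptimalityOfTrajectories}, and then invoking \Cref{ProprietesFaciles}, \Cref{ExistenceLimit} and \Cref{ConvergenceTheorem} for the four conclusions. Your explicit verification of \Cref{AssumptionOracle} for the singleton draws (Dirac masses, any constant $g_t \in (0,1)$ works), with the consequent remark that the almost-sure conclusion holds surely here, is a detail the paper leaves implicit but does not alter the argument.
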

\begin{proof}
 As the structural assumptions Assumption~\ref{Assumptions} are satisfied, as the functions
 $\func_t^{\SDDP}$, $0\leq t\leq T$, are compatible selections and the sets
 $\left(K_t^*\right)_{t\in \ce{0,T}}$ are $\np{V_t^*}$-optimal (case
 $\optop = \sup$) by Theorem~\ref{ConvergenceTheorem}, we have the result.
\end{proof}

\section{A min-plus selection function: upper approximations in the
  linear-quadratic framework with both continuous and discrete controls}
\label{sec:Exemples_switch}

In \S\ref{homogeneous_case}, we study the case where the cost functions and
dynamics are homogeneous.  We conclude this section in \S\ref{theExample} with an
example which shows that using optimal trajectories of the best current
approximations as trial points in \NomAlgo \ may generate functions
$\np{V_t^k}_{k\in \N}$ which do not converge to the value function $V_t$. In the appendix \ref{homogenization}, we show how one can
use the homogeneous case to solve the non-homogeneous case by augmenting the
state dimension by one.

\subsection{The pure homogeneous case}
\label{homogeneous_case} We will denote by $\Mat_n$ the set of $n{\times}n$ real matrices and by $\Sy_{n}\subset \Mat_n$ the subset of symmetric
matrices.

\begin{definition}[Pure quadratic form]
 \label{purequadform}
 We say that a function $q : \X \rightarrow \R$ is a pure quadratic form if there exist a symmetric real matrix $M \in \Sy_n$ such that for every $x\in \X$, we have
 \(
 q(x) = x^T M x.
 \)

 Similarly, a function $q : \X \times \U \rightarrow \R$ is a pure quadratic
 form if there exist two symmetric real matrices $M_1 \in \Sy_n$ and
 $M_2 \in \Sy_m$ such that for every $x \in \X$, we have
 \(
 q(x,u) = x^T M_1 x + u^T M_2 u.
 \)
\end{definition}

Let us insist that pure quadratic forms are not general $2$-homogeneous quadratic forms in the sense that they lack a mixing term of the form $x^T M u$. In \ref{homogenization} we show why we do not lose generality by studying this case instead of general polynomials of degree $2$. Let $\X = \R^n $ be a continuous state space (endowed with its euclidean and Borel structure), $\U = \R^m$ a continuous control space  and $\Discret$ a finite set of discrete (or switching) controls. We want to solve the following optimization problem
\begin{subequations}
 \label{pb:linear-quad-switch}
 \begin{align}
  \min_{\substack{ (x, u , \discret) \in \X^{T+1} \times \U^T \times \Discret^T}}
   & \sum_{t=0}^{T-1} c_t^{\discret_t} (x_t, u_t) + \psi(x_T)
  \\
  \text{s.t.}
   & \quad x_0 \in \X \ \text{given, and}\quad
  \forall t \in \ce{0,T-1}, \ x_{t+1} = f_t^{\discret_t}\np{ x_t, u_t}\eqfinp
 \end{align}
\end{subequations}
\begin{assumption}
 \label{hypo:linear-quad-switch}
 Let $t \in \ce{0,T-1}$ and $\discret \in \Discret$ be arbitrary.

 \noindent --
 The dynamic $f_t^\discret : \X \times \U \longrightarrow \X$ is linear. That is,
 \(
 f_t^\discret(x, u) = A_t^\discret x + B_t^\discret u,
 \)
 for some given matrices $A_t^\discret$ and $B_t^\discret$ of compatible dimensions.

 \noindent -- The cost function $c_t^\discret : \X \times \U \longrightarrow \R$ is a pure convex quadratic form,
 \(
 c_t^\discret(x,u) = x^T C_t^\discret x + u^TD_t^\discret u,
 \)
 where the matrix $C_t^\discret$ is symmetric semidefinite positive and the matrix $D_t^\discret$ is symmetric definite positive.

 \noindent -- The final cost function $\psi := \inf_{i\in I_T} \psi_i$ is a finite infimum of pure convex quadratic form, of matrix $M_i \in \Sy_n$ with $i\in I_T$ a finite set, such that there exists a constant $\alpha_T \geq 0$ satisfying for every $i \in I_T$
 \(
 \quad 0 \preceq M_i \preceq \alpha_T \Id
 \).
\end{assumption}

One can write the Dynamic Programming equation for Problem~\ref{pb:linear-quad-switch} as follows
\begin{equation}
 \label{DP:linear-quad-switch}
 V_T  = \psi
 \quad\text{ and }
 \forall t\in \ce{0,T{-}1}, \forall x\in \X,
 V_t\np{x} = \inf_{\discret\in \Discret}\inf_{u \in \U} c_t^\discret( x, u) + V_{t+1} \bp{f_t^\discret(x, u)}
 \eqfinp
\end{equation}

The following result is crucial in order to study this example: the value functions are $2$-homogeneous, allowing us to restrict their study to the unit sphere.
\begin{proposition}
 For every time step $t\in\ce{0,T}$, the value function $V_t$, solution of
 Equation~\eqref{DP:linear-quad-switch} is $2$-homogeneous, that is, for
 every $x\in \X$ and every $\lambda \in \R$, we have
 \(
 V_t\np{\lambda x} = \lambda^2 V_t\np{x}
 \).
\end{proposition}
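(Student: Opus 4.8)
The plan is to proceed by backward induction on $t\in\ce{0,T}$, the crucial mechanism being a rescaling of the continuous control that converts the $2$-homogeneity of $V_{t+1}$ into that of $V_t$.

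For the base case $t=T$, recall from \Cref{hypo:linear-quad-switch} that $V_T=\psi=\inf_{i\in I_T}\psi_i$ where each $\psi_i(x)=x^T M_i x$. Since $(\lambda x)^T M_i(\lambda x)=\lambda^2 x^T M_i x$, each $\psi_i$ is $2$-homogeneous, and a pointwise infimum of $2$-homogeneous functions is again $2$-homogeneous; hence $V_T(\lambda x)=\lambda^2 V_T(x)$ for all $x$ and $\lambda$.

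For the induction step, assume $V_{t+1}(\lambda y)=\lambda^2 V_{t+1}(y)$ for all $y\in\X$ and $\lambda\in\R$. Fix $x\in\X$ and first take $\lambda\neq 0$. Using the explicit forms of the cost and dynamics in \Cref{hypo:linear-quad-switch}, one has $c_t^\discret(\lambda x,u)=\lambda^2 x^T C_t^\discret x+u^T D_t^\discret u$ and $f_t^\discret(\lambda x,u)=\lambda A_t^\discret x+B_t^\discret u$. I would then perform the change of variable $u=\lambda u'$ in the inner infimum of \cref{DP:linear-quad-switch}; since $\lambda\neq 0$ the map $u'\mapsto\lambda u'$ is a bijection of $\U=\R^m$, so the infimum is unchanged, while $c_t^\discret(\lambda x,\lambda u')=\lambda^2 c_t^\discret(x,u')$ and $f_t^\discret(\lambda x,\lambda u')=\lambda\, f_t^\discret(x,u')$. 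Substituting and applying the induction hypothesis to $V_{t+1}\bp{\lambda f_t^\discret(x,u')}$ factors $\lambda^2$ out of every term, giving
\begin{equation*}
 V_t(\lambda x)=\lambda^2\inf_{\discret\in\Discret}\inf_{u'\in\U}\Bp{c_t^\discret(x,u')+V_{t+1}\bp{f_t^\discret(x,u')}}=\lambda^2 V_t(x).
\end{equation*}

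The only point requiring separate care is $\lambda=0$, where the control rescaling degenerates; here one must show directly that $V_t(0)=0$. Since $C_t^\discret\succeq 0$, $D_t^\discret\succ 0$ and $0\preceq M_i$, every stage cost and the final cost are nonnegative, so a short backward induction yields $V_t\geq 0$ throughout, hence $V_t(0)\geq 0$. Conversely, evaluating the Bellman equation at state $0$ with the control $u=0$ gives cost $0$ plus $V_{t+1}(0)=0$ (by induction), so $V_t(0)\leq 0$; thus $V_t(0)=0=0^2\,V_t(x)$. I do not expect a genuine obstacle: the argument is a clean induction whose heart is the control rescaling, and the only mild subtlety is isolating the $\lambda=0$ case, which the nonnegativity of the data resolves immediately.
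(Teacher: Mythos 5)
Your proof is correct and follows essentially the same route as the paper: a backward induction on $t$, with the base case given by \Cref{hypo:linear-quad-switch} and the step exploiting the quadratic structure of $c_t^\discret$, the linearity of $f_t^\discret$, and the $2$-homogeneity of $V_{t+1}$. In fact your write-up is more careful than the paper's one-line induction step, since you make explicit the control rescaling $u=\lambda u'$ (which is the mechanism the paper's appeal to ``$2$-homogeneity of $x\mapsto c_{t-1}^\discret(x,u)$'' tacitly relies on, as that map alone is not $2$-homogeneous for fixed $u\neq 0$) and you treat the degenerate case $\lambda=0$ separately.
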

\begin{proof}
 We proceed by backward recursion on time step $t\in \ce{0,T}$. For $t=T$ it is
 true by Assumption~\ref{hypo:linear-quad-switch}. Assume that it is true for some
 $t\in \ce{1,T}$. Fix $\lambda \in \R$, then by definition of $V_{t-1}$, for
 every $x\in \X$, we have
 \begin{align*}
  V_{t-1}\left(\lambda x \right)
   & = \min_{\discret \in \Discret}
  \min_{u\in U} c_{t-1}^\discret \np{\lambda x, u} +
  V_{t}\bp{ f_{t-1}^\discret\np{\lambda x,u}}
  \\
   & = \min_{\discret \in \Discret}
  \min_{u' = u/\lambda \in U} c_{t-1}^\discret\np{\lambda x, \lambda u'} +
  V_{t}\bp{ f_{t-1}^\discret\np{\lambda x,\lambda u'}}\eqfinv
 \end{align*}
 which yields the result by $2$-homogeneity of $x\mapsto c_{t-1}^\discret \np{x,u}$,
 linearity of $f_{t-1}^\discret$ and $2$-homogeneity of $V_{t}$.
\end{proof}

Thus, in order to compute $V_t$, one only needs to know its values on the unit
(euclidean) sphere $\Sphere$ as for every non-zero $x \in \X$,
$V_t(x) = \Vert x \Vert^2 \ V_t\bp{\frac{x}{\Vert x \Vert}}$. Hence, we will refine our approximations only on the sphere, that is we will draw trial points uniformly on the sphere and use the Oracle defined in Example~\ref{example:sphere}. Now, for every time
$t\in \ce{0,T{-}1}$ and every switching control $\discret \in \Discret$ we define
the \emph{Bellman operator with fixed switching control} $\B_t^\discret$ for
every function $\func : \X \rightarrow \overline{\R}$ by:
\[
 \B_t^\discret(\func ) := \inf_{u \in \U} c_t^\discret( \cdot, u) + \lVert f_t^\discret(\cdot, u) \rVert^2\func \left(\frac{f_t^\discret(\cdot, u)}{\lVert f_t^\discret(\cdot, u)\rVert} \right).
\]
For every time $t\in \ce{0,T-1}$ we define the \emph{Bellman operator} $\B_t$ for every function $\func : \X \to \overline{\R}$ by:
\begin{equation}
 \label{Bellman-min-plus}
 \B_t\left( \func \right) := \inf_{\discret \in \Discret} \B_t^\discret \left( \func \right).
\end{equation}
This definition of the Bellman operator emphasizes that the unit sphere
$\Sphere$ is $(V_t)$-optimal in the sense of Definition~\ref{optimalDraws}. Note that for
$2$-homogeneous functions, we have that
$\B_t^{\discret}\np{\func} = \inf_{u\in \U} c_t^{\discret}\np{\cdot,u} +
 \func\np{f_t^{\discret}\np{\cdot,u}}$. Using Equation~\eqref{Bellman-min-plus}, one can
rewrite the Dynamic Programming Equation~\eqref{DP:linear-quad-switch} as
\begin{equation}
 \label{DP:tmp}
 V_T  = \psi
 \quad\text{ and }\quad
 \forall t\in \ce{0,T{-}1}, V_t = \B_t\np{V_{t+1}}
 \eqfinp
\end{equation}
Now, in order to apply the \NomAlgo \ algorithm to Equation~\eqref{DP:tmp}, we
need to check Assumption~\ref{Assumptions}. Under Assumption~\ref{hypo:linear-quad-switch}, there
exist an interval in the cone of symmetric semidefinite matrices which is stable
by every Bellman operator $\B_t$ in the sense of the proposition below. We will
consider the Loewner order on the cone of (real) symmetric semidefinite
matrices, \emph{i.e.} for every couple of matrices of symmetric matrices
$\left(M_1, M_2\right)$ we say that $M_1 \preceq M_2$ if, and only if,
$M_2 - M_1$ is semidefinite positive. Moreover we will identify a pure quadratic
form with its symmetric matrix, thus when we write an infimum over symmetric
matrices, we mean the pointwise infimum over their associated pure quadratic
forms.

\begin{proposition}[Existence of a stable interval]
 \label{StableInterval}
 Under Assumption~\ref{hypo:linear-quad-switch}, we define a sequence of positive reals $\np{\alpha_t}_{t\in \ce{0,T}}$ by backward recursion on $t\in \ce{0,T-1}$ such that we have:
 \begin{equation}
  0 \preceq M \preceq \alpha_{t+1} \Id \Rightarrow 0 \preceq \B_t(M) \preceq \alpha_t \Id,
 \end{equation}
 where $\alpha_T$ is a given constant by Assumption~\ref{hypo:linear-quad-switch}.
\end{proposition}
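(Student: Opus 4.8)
The plan is to treat the statement switching control by switching control and reduce everything to a one-step discrete Riccati computation, and then to pass to the (finite) infimum over $\discret\in\Discret$. Fix $t\in\ce{0,T-1}$ and a pure quadratic form $M$ with $0\preceq M\preceq\alpha_{t+1}\Id$. First I would observe that the homogenization in the definition of $\B_t^\discret$ is vacuous on pure quadratic forms: if $\func(y)=y^T M y$ then $\Vert y\Vert^2\func\np{y/\Vert y\Vert}=y^T M y$, so that
\[
 \B_t^\discret(M)(x) = \inf_{u\in\U}\Bp{x^T C_t^\discret x + u^T D_t^\discret u + \np{A_t^\discret x + B_t^\discret u}^T M \np{A_t^\discret x + B_t^\discret u}}\eqfinp
\]

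Second, I would solve this inner minimization explicitly. The objective is quadratic in $u$ with Hessian $D_t^\discret+(B_t^\discret)^T M B_t^\discret$, which is positive definite since $D_t^\discret\succ 0$ and $(B_t^\discret)^T M B_t^\discret\succeq 0$ (because $M\succeq 0$); hence the minimum is attained at a unique point and, after substitution, $\B_t^\discret(M)$ is again a pure quadratic form, of matrix
\[
 \Phi_t^\discret(M) := C_t^\discret + (A_t^\discret)^T M A_t^\discret - (A_t^\discret)^T M B_t^\discret\np{D_t^\discret+(B_t^\discret)^T M B_t^\discret}^{-1}(B_t^\discret)^T M A_t^\discret\eqfinp
\]
This is the single-step discrete Riccati map.

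The two bounds then follow from this formula. For the lower bound I would not even expand: since $c_t^\discret\geq 0$ and $M\succeq 0$, every term inside the infimum is non-negative, so $\B_t^\discret(M)\geq 0$, i.e. $\Phi_t^\discret(M)\succeq 0$. For the upper bound I would drop the subtracted term, which has the form $N^T\np{D_t^\discret+(B_t^\discret)^T M B_t^\discret}^{-1}N$ with $N=(B_t^\discret)^T M A_t^\discret$ and is therefore positive semidefinite, giving $\Phi_t^\discret(M)\preceq C_t^\discret+(A_t^\discret)^T M A_t^\discret$; then $M\preceq\alpha_{t+1}\Id$ and $C_t^\discret\preceq\lambda_{\mathrm{max}}(C_t^\discret)\Id$ yield
\[
 \Phi_t^\discret(M)\preceq\Bp{\lambda_{\mathrm{max}}(C_t^\discret)+\alpha_{t+1}\,\lambda_{\mathrm{max}}\np{(A_t^\discret)^T A_t^\discret}}\Id\eqfinp
\]
I would then set $\alpha_t:=\max_{\discret\in\Discret}\big[\lambda_{\mathrm{max}}(C_t^\discret)+\alpha_{t+1}\lambda_{\mathrm{max}}((A_t^\discret)^T A_t^\discret)\big]$, finite because $\Discret$ is finite, so that $0\preceq\Phi_t^\discret(M)\preceq\alpha_t\Id$ for every $\discret$. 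Since $\B_t(M)=\inf_{\discret}\Phi_t^\discret(M)$ is a finite pointwise infimum of these forms, both bounds pass to the infimum, which is exactly $0\preceq\B_t(M)\preceq\alpha_t\Id$ read as an inequality between functions. Running this recursion backward from the given $\alpha_T$ defines the whole sequence $\np{\alpha_t}_{t\in\ce{0,T}}$.

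The only genuine obstacle is the Riccati computation of the second step: one must verify that the Hessian is invertible, which legitimizes both the minimizer and the closed form, and one must keep in mind that the infimum over $\discret$ produces a pointwise infimum of quadratic forms rather than a single matrix, so the final $\preceq$ is an inequality between functions and not the Loewner order applied to one matrix. Everything else is routine manipulation of the Loewner order.
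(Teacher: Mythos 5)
Your proof is correct and reaches exactly the paper's constant $\alpha_t = \max_{\discret\in\Discret}\bigl[\lambda_{\text{max}}(C_t^\discret)+\alpha_{t+1}\lambda_{\text{max}}\np{(A_t^\discret)^T A_t^\discret}\bigr]$, but by a different mechanism. The paper first notes that $\B_t$ is order preserving, which reduces the whole problem to the two endpoints of the interval: the lower bound comes from $\B_t(0)=\min_{\discret\in\Discret} C_t^\discret\succeq 0$, and the upper bound only requires controlling $\B_t\np{\alpha_{t+1}\Id}$, which the paper does through the reduced Riccati form $\B_t^\discret(M)=(A_t^\discret)^T M\np{\Id+B_t^\discret(D_t^\discret)^{-1}(B_t^\discret)^T M}^{-1}A_t^\discret+C_t^\discret$ combined with the eigenvalue estimates of the appendix (in particular $\lambda_{\text{min}}\np{\Id+\alpha_{t+1}B_t^\discret(D_t^\discret)^{-1}(B_t^\discret)^T}\geq 1$). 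You instead bound $\B_t^\discret(M)$ directly for \emph{every} $M$ in the interval, with no appeal to monotonicity: the lower bound from pointwise nonnegativity of all terms under the infimum, and the upper bound by discarding the subtracted term $N^T\np{D_t^\discret+(B_t^\discret)^T M B_t^\discret}^{-1}N$, which is positive semidefinite, in the standard (non-reduced) Riccati formula. What monotonicity buys the paper is that one never needs a bound valid uniformly in $M$, and the argument stays consistent with their abstract framework of order-preserving Bellman operators; what your route buys is self-containedness: you avoid the matrix-inversion-lemma rewriting and the appendix eigenvalue propositions, at the price of redoing the one-step minimization in $u$ explicitly (which the paper in any case carries out in its appendix to establish the Riccati formula you invoke). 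Your closing remark that the conclusion $0\preceq\B_t(M)\preceq\alpha_t\Id$ must be read pointwise, as an inequality between functions rather than a Loewner inequality for a single matrix, is also the correct reading of the statement; the paper makes the same identification, but only implicitly.
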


\begin{proof}
 First, given an arbitrary $t\in \ce{0,T}$, we want to show that if
 $M \succeq 0$ then $\B_t (M) \succeq 0$. As in
 Proposition~\ref{propStructurale_SDDP} one can show that the Bellman operator $\B_t$ is
 order preserving. Therefore, if $M \succeq 0$ then
 $\B_t (M) \succeq \B_t (0)$. Hence it is enough to show that
 $\B_t (0) \succeq 0$. But by Formula~\eqref{equa:Riccati_reduced}, we have that
 $B_t (0) = \min_{\discret \in \Discret}C_t^\discret \succeq 0$ (by
 Assumption~\ref{hypo:linear-quad-switch}) hence the result follows.

 Second, let $t\in \ce{0,T-1}$ and $\alpha_{t+1} >0$ be fixed. We consider
 $\alpha_t > 0$ defined by
 \begin{equation}
  \label{alphadef}
  \alpha_t :=  \max_{\discret \in \Discret}  \alpha_{t+1} \lambdamax\bp{{A_t^\discret}\np{A_t^\discret}^T} + \lambdamax(C_t^\discret ) > 0
  \eqfinv
 \end{equation}
 and we prove that if $M\preceq \alpha_{t+1} \Id$ then we have that
 $\B_t (M) \preceq \alpha_t \Id$. For that purpose, consider $M$ such that
 $M\preceq \alpha_{t+1} \Id$. Then, denoting by $\overline{B}_t^\discret$ the
 matrix
 $\Id + \alpha_{t+1} B_t^\discret \np{D_t^\discret}^{-1} \np{B_t^\discret}^T$,
 we have that
 $\lambdamin(\overline{B}_t^\discret)= 1 + \alpha_{t+1}\lambdamin(B_t^\discret
  \np{D_t^\discret}^{-1} \np{B_t^\discret}^T)\ge 1$ using the fact that the
 matrix $B_t^\discret \np{D_t^\discret}^{-1} \np{B_t^\discret}^T$ is positive
 semi-definite by Assumptions~\ref{hypo:linear-quad-switch}. Now, we
 successively have for any $\discret \in \Discret$
 \begin{align}
  \lambdamax \bp{\B_t^\discret \np{M}}
   & \le \lambdamax \bp{\B_t^\discret \np{\alpha_{t+1}  \Id}}
  \tag{$\B_t^\discret$ is order preserving}
  \\
   & = \lambdamax
  \Bp{
   \alpha_{t+1} \np{A_t^\discret} ^T \np{\overline{B}_t^\discret}^{-1} A_t^\discret  + C_t^\discret
  }
  \tag{using~\eqref{equa:Riccati_reduced}}
  \\
   & \le
  \alpha_{t+1} \lambdamax
  \Bp{
   \np{A_t^\discret} ^T \np{\overline{B}_t^\discret}^{-1} A_t^\discret}  + \lambdamax\np{C_t^\discret}
  \tag{by Proposition~\ref{prop:valeurpropre}}
  \\
   & \le
  \alpha_{t+1}
  \lambdamax\bp{A_t^\discret {A_t^\discret} ^T}
  \lambdamax\bp{\np{\overline{B}_t^\discret}^{-1}}
  + \lambdamax\np{C_t^\discret}
  \tag{by Proposition~\ref{prop:valeurpropre}}
  \\
   & \le\alpha_{t+1}  \lambdamax\bp{A_t^\discret {A_t^\discret} ^T} + \lambdamax\np{C_t^\discret}
  \tag{as $\lambdamax\bp{\np{\overline{B}_t^\discret}}= \lambdamin\bp{\np{\overline{B}_t^\discret}}^{-1} \le 1$ }
  \\
   & \le \alpha_t \tag{using~\eqref{alphadef}}
  \eqfinv
 \end{align}
 which gives that $\B_t^\discret(M) \preceq \alpha_t \Id$. Then, the same result follows for the operator
 $\B_t$ using Equation~\eqref{Bellman-min-plus}. This ends the proof.
\end{proof}

Using Proposition~\ref{StableInterval},, one can deduce by backward recursion on $t\in \ce{0,T-1}$ the existence of intervals of matrices, in the Loewner order, which are stable by the Bellman operators.

\begin{corollary}
 \label{coro:StableBellman}
 Under Assumption~\ref{hypo:linear-quad-switch}, using the sequence of positive reals
 $\np{\alpha_t}_{t\in \ce{0,T}}$ defined in Proposition~\ref{StableInterval},, we define a
 sequence of positive reals $\np{\beta_t}_{t\in \ce{0,T}}$ by
 \( \beta_T := \alpha_T\) and \(\forall t\in \ce{0,T{-}1}\), $\beta_t:= \max\np{\alpha_t,\beta_{t+1}}$.
 Then, one has that
 \[
  0 \preceq M \preceq \beta_T \Id \Rightarrow \forall t \in \ce{0,T-1}, \ 0
  \preceq \B_{t}\np{\ldots \B_{T-2}\np{\B_{T-1}\np{M}}} \preceq \beta_t \Id .
 \]
\end{corollary}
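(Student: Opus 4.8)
The plan is to argue by backward induction in time, using \Cref{StableInterval} as the single-step stability statement and only invoking the definition of the sequence $\np{\beta_t}$ at the very end, to relax the sharp bound $\alpha_t$ into the uniform bound $\beta_t$. The key observation is that \Cref{StableInterval} already provides exactly the one-step propagation $0\preceq M \preceq \alpha_{t+1}\Id \Rightarrow 0 \preceq \B_t\np{M}\preceq \alpha_t\Id$, so all that remains is to iterate it and then compare $\alpha_t$ with $\beta_t$.

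First I would fix a symmetric matrix $M$ with $0 \preceq M \preceq \beta_T \Id$ and introduce the backward iterates $M_T := M$ and, for $s$ from $T-1$ down to $0$, $M_s := \B_s\np{M_{s+1}}$, so that $M_t = \B_t\np{\ldots \B_{T-2}\np{\B_{T-1}\np{M}}}$ is precisely the object appearing in the statement. Since $\beta_T = \alpha_T$ by definition, the hypothesis reads $0 \preceq M_T \preceq \alpha_T \Id$. I would then prove the sharper claim that $0 \preceq M_s \preceq \alpha_s \Id$ for every $s\in \ce{0,T}$, by backward induction on $s$: the base case $s = T$ is the hypothesis, and for the inductive step, assuming $0 \preceq M_{s+1} \preceq \alpha_{s+1}\Id$, applying \Cref{StableInterval} at time step $s$ gives directly $0 \preceq \B_s\np{M_{s+1}} = M_s \preceq \alpha_s \Id$. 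This single implication is what drives the whole argument.

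Finally, I would note that the recursion $\beta_s := \max\np{\alpha_s, \beta_{s+1}}$ yields, by a one-line induction, $\beta_s = \max_{s\leq r \leq T}\alpha_r \geq \alpha_s$, hence $\alpha_s \Id \preceq \beta_s \Id$ since $\np{\beta_s - \alpha_s}\Id \succeq 0$. Combining this with the claim proved above gives $0 \preceq M_s \preceq \beta_s \Id$ for every $s\in \ce{0,T}$, in particular for every $t \in \ce{0,T-1}$, which is the assertion. I do not expect any genuine obstacle: the argument is a routine iteration of \Cref{StableInterval}, and the passage from $\alpha_t$ to $\beta_t$ only weakens the bound. The sole point requiring a little care is the bookkeeping of the composition indices, so that the one-step proposition is invoked at the correct time step $s$ with input $M_{s+1}$ bounded by $\alpha_{s+1}\Id$.
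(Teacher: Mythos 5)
Your overall strategy --- backward induction iterating \Cref{StableInterval} to get the sharper bound $0 \preceq M_s \preceq \alpha_s \Id$, and only at the end relaxing $\alpha_s$ to $\beta_s = \max_{s\leq r\leq T}\alpha_r$ --- is exactly the backward recursion the paper intends (the paper states the corollary with no more justification than ``one can deduce by backward recursion''), and your handling of the comparison $\alpha_s \leq \beta_s$ is correct.

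There is, however, one point where your inductive step is not literally licensed by \Cref{StableInterval} as stated. The operator $\B_s$ of \cref{Bellman-min-plus} is an infimum over the finite switch set $\Discret$ of the operators $\B_s^\discret$; consequently, for $s \leq T-1$ the iterate $M_s = \B_s\np{M_{s+1}}$ is \emph{not} a pure quadratic form (a single symmetric matrix) but a finite pointwise infimum of such forms. Since \Cref{StableInterval} is stated for a matrix $M$, ``applying \Cref{StableInterval} at time step $s$'' with input $M_{s+1}$ needs a short justification. Two standard patches: (i) use the min-additivity of $\B_s$ to unfold the composition into per-switch-sequence iterates $\B_t^{\discret_t}\np{\cdots \B_{T-1}^{\discret_{T-1}}\np{M}}$, each of which is a genuine matrix satisfying the one-step bound (the proof of \Cref{StableInterval} in fact establishes $\B_s^\discret\np{\alpha_{s+1}\Id} \preceq \alpha_s \Id$ for each fixed $\discret$), and then take the pointwise infimum over switch sequences, which preserves the two-sided bound; or (ii) use that $\B_s$ is order preserving on arbitrary functions together with the two endpoint estimates from the proof of \Cref{StableInterval}, namely $\B_s(0) = \min_{\discret} C_s^\discret \succeq 0$ and $\B_s\np{\alpha_{s+1}\Id} \preceq \alpha_s \Id$ pointwise, so that $0 \leq \phi \leq \alpha_{s+1}\lVert \cdot \rVert^2$ pointwise implies $0 \leq \B_s\np{\phi} \leq \alpha_s \lVert \cdot \rVert^2$ pointwise. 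Either patch is one or two lines, after which your induction goes through verbatim; the paper itself blurs this distinction through its stated convention of identifying infima of symmetric matrices with pointwise infima of the associated quadratic forms.
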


The basic functions $\Funcb^{\MinPlus}_t$ will be pure quadratic convex forms bounded in the Loewner sense by $0$ and $\beta_t I$,
\[
 \Funcb_t^{\MinPlus} :=
 \bset{\func: x \in \X \mapsto x^TMx \in \R}{ M \in \Sy_n, \ 0\preceq M \preceq \beta_t \Id}\eqfinv
\]
and we define the following class of functions which will be stable by pointwise infimum of elements in $\Funcb_t^{\MinPlus}$,
\begin{equation}
 \label{def_funcbbQu}
 \Funcbb_t^{\MinPlus} :=
 \bset{\vopt_\Func}{\Func \subset \Funcb_t^{\MinPlus}}\eqfinp
\end{equation}

Exploiting the min additivity of the Bellman operator, which gives that
$\B_t\bp{\inf\np{\func_1, \func_2}} = \inf \bp{\B_t\np{\func_1},
  \B_t\np{\func_2}}$, and the fact that the final cost $\widetilde{\psi}$ is a
finite infima of basic functions, one deduces by backward induction on
$t\in \ce{0,T}$ that the value functions are finite infima of basic functions.

\begin{lemma}
 \label{FiniteInfimum}
 For every time $t\in \ce{0,T}$, there exists a finite set $\Func_t$ of convex pure quadratic forms such that
 \[
  V_t = \inf_{\func \in \Func_t} \func.
 \]
\end{lemma}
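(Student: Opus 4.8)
The plan is to prove the statement by backward induction on $t\in\ce{0,T}$, exploiting the min-plus linearity of the Bellman operator to reduce the propagation of a finite infimum of pure quadratics through $\B_t$ to the propagation of a single pure quadratic form, which is then handled explicitly by the reduced Riccati formula. For the base case $t=T$, \Cref{hypo:linear-quad-switch} directly gives $V_T=\psi=\inf_{i\in I_T}\psi_i$ as a finite infimum of pure convex quadratic forms, and since the matrices $M_i$ satisfy $0\preceq M_i\preceq\alpha_T\Id=\beta_T\Id$ it suffices to set $\Func_T:=\ba{\psi_i\mid i\in I_T}\subset\Funcb_T^{\text{min-plus}}$.

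For the inductive step, assume $V_{t+1}=\inf_{\func\in\Func_{t+1}}\func$ for a finite set $\Func_{t+1}\subset\Funcb_{t+1}^{\text{min-plus}}$. First I would establish the min-additivity identity $\B_t\bp{\inf_{\func\in\Func_{t+1}}\func}=\inf_{\discret\in\Discret}\inf_{\func\in\Func_{t+1}}\B_t^\discret(\func)$. Since every $\func\in\Func_{t+1}$ is $2$-homogeneous, the normalization in the definition of $\B_t^\discret$ collapses and one has $\B_t^\discret(\func)(x)=\inf_{u\in\U}\bp{c_t^\discret(x,u)+\func\np{f_t^\discret(x,u)}}$; because $\lVert f_t^\discret(x,u)\rVert^2\ge 0$ and the sets $\Func_{t+1}$ and $\Discret$ are finite, the infima over $u$, over $\func$ and over $\discret$ all commute, which yields the claimed identity.

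Next, for each fixed pair $(\discret,\func)$ with $\func=x^{T}Mx$ and $0\preceq M\preceq\beta_{t+1}\Id$, I would show that $\B_t^\discret(\func)$ is a single pure convex quadratic form. The objective $c_t^\discret(x,u)+\func\np{f_t^\discret(x,u)}$ is jointly quadratic in $(x,u)$ and strictly convex in $u$ since $D_t^\discret\succ 0$ and $M\succeq 0$; minimizing in $u$ gives an optimal control depending linearly on $x$, and substituting it back produces a pure quadratic form in $x$ whose matrix is given by \cref{equa:Riccati_reduced}. Convexity follows because it is a marginal of a jointly convex function, as in \Cref{propStructurale_SDDP}, and by \Cref{coro:StableBellman} its matrix remains in the Loewner interval $[0,\beta_t\Id]$, so $\B_t^\discret(\func)\in\Funcb_t^{\text{min-plus}}$. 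Defining the finite set $\Func_t:=\ba{\B_t^\discret(\func)\mid \discret\in\Discret,\ \func\in\Func_{t+1}}$, of cardinality at most $\vert\Discret\vert\cdot\vert\Func_{t+1}\vert$, the min-additivity identity then reads $V_t=\B_t(V_{t+1})=\inf_{\func'\in\Func_t}\func'$, completing the induction.

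The only genuinely quadratic-specific point, and hence the main obstacle, is that partial minimization over $u$ of the jointly quadratic objective returns a pure quadratic form carrying no affine or constant term. This relies crucially on the cost $c_t^\discret$ being a \emph{pure} quadratic form (no mixing term $x^{T}Mu$ and no linear or constant part) together with the linearity of $f_t^\discret$: these ensure that the optimal $u$ is linear in $x$ and that the optimal value is exactly $2$-homogeneous, which is precisely what \cref{equa:Riccati_reduced} records. Everything else, namely the commutation of the finite infima and the bookkeeping of the index sets, is routine.
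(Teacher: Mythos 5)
Your proposal is correct and follows essentially the same route as the paper: backward induction on $t$ with $\Func_T:=\{\psi_i\}_{i\in I_T}$, min-additivity of the operators $\B_t^{\discret}$ to commute the Bellman operator with the finite infimum, and the Riccati formula \cref{equa:Riccati_reduced} to see that each $\B_t^{\discret}(\func)$ is a single convex pure quadratic form, giving $\Func_t:=\{\B_t^{\discret}(\func)\mid \func\in\Func_{t+1},\ \discret\in\Discret\}$. The only difference is that you spell out the justifications (commutation of the infima, the substitution of the linear optimal control, and the Loewner bounds) that the paper leaves implicit or defers to its appendix.
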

\begin{proof}
 For $t = T$, set $\Func_T := \left\{ \psi_i \right\}_{i\in I_T}$.  Now, assume
 that for some $t\in \ce{0,T-1}$, we have that
 \( V_{t+1} = \inf_{\func \in \Func_{t+1}} \func \), where $\Func_{t+1}$ is a
 finite set of convex pure quadratic functions. Then, by definition of the
 Bellman operators $\B_t$ (see Equation~\eqref{Bellman-min-plus}), we have that
 \begin{align*}
  V_t & = \B_t\np{V_{t+1}}
  =  \inf_{\discret \in \Discret} \B_t^{\discret}\np{\inf_{\func \in \Func_{t+1} } \func}
  = \inf_{\func \in \Func_{t+1} }\inf_{\discret \in \Discret}\B_t^{\discret} \np{\func}
  = \inf_{\func \in \Func_{t+1} , \discret \in \Discret} \bp{\B_t^{\discret} \np{\func}}
 \end{align*}
 Thus, setting $\Func_t := \ba{\B_t^{\discret} \np{\func}\,\vert\,\func \in \Func_{t+1} \text{ and } \discret \in \Discret}$,
 we obtain that $V_t = \inf_{\func \in \Func_t} \func$, where $F_t$  is a finite set of convex pure quadratic functions.
 Backward induction on time $t \in \ce{0,T}$ ends the proof.
\end{proof}

\begin{proposition}
 \label{struct_assump_lin-quad-switch}
 Under Assumption~\ref{hypo:linear-quad-switch}, the Problem~\ref{pb:linear-quad-switch}
 and the Bellman operators defined in Equation~\eqref{DP:linear-quad-switch} satisfy the
 structural assumptions given in Assumption~\ref{Assumptions}.
\end{proposition}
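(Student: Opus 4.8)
The plan is to follow the blueprint of \Cref{propStructurale_SDDP} and verify the nine items of \Cref{Assumptions} one by one, now in the case $\opt = \inf$. Two facts proved above do most of the work: by \Cref{FiniteInfimum} each $V_t$ is a finite infimum of real-valued pure quadratic forms, so $\dom\np{V_t} = \X$ and the domain constraints are vacuous; and by \Cref{StableInterval} and \Cref{coro:StableBellman} the reduced Riccati maps send the Loewner interval of pure quadratics with matrix in $[0,\beta_{t+1}\Id]$ into $[0,\beta_t\Id]$. With these, four items are immediate. \emph{Final condition} holds with $\Func_T := \na{\psi_i}_{i\in I_T}$, which lies in $\Funcb_T^{\text{min-plus}}$ since $0\preceq M_i\preceq \alpha_T\Id = \beta_T\Id$. \emph{Stability by pointwise optimum} is the very definition \cref{def_funcbbQu} of $\Funcbb_t^{\text{min-plus}}$. \emph{Order preserving} is copied from \Cref{propStructurale_SDDP}: since $\lVert f_t^\discret(x,u)\rVert^2\geq 0$ the rescaling in \cref{Bellman-min-plus} is monotone, and taking infima over $u$ and $\discret$ preserves order. \emph{Existence of the value functions} follows from \Cref{FiniteInfimum} and the positive semidefiniteness of the matrices, so each $V_t$ is finite, nonnegative and not identically $+\infty$. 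For \emph{Common regularity} I would note that every $\func\in\Funcbb_t^{\text{min-plus}}$ is an infimum of forms $q_M:x\mapsto x^TMx$ with $0\preceq M\preceq \beta_t\Id$; on any ball $B(x_0,r)$ one has $\lvert q_M(x)-q_M(x')\rvert=\lvert (x-x')^TM(x+x')\rvert\leq \beta_t\lVert x-x'\rVert\lVert x+x'\rVert$, so all $q_M$ share the Lipschitz constant $2\beta_t(\lVert x_0\rVert+r)$, and an infimum inherits it. Finally, for \emph{Stability by the Bellman operators} the nonnegativity of $\lVert f_t^\discret\rVert^2$ gives $\B_t^\discret\np{\inf_M q_M}=\inf_M \B_t^\discret\np{q_M}$ even for infinite families, so $\B_t(\func)=\inf_{\discret,M}\B_t^\discret(q_M)$ is again an infimum of pure quadratics whose matrices lie in $[0,\beta_t\Id]$ by \Cref{coro:StableBellman}.

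The first genuinely new point is \emph{Stability by pointwise convergence}, which I would obtain from the compactness of the Loewner interval $\mathcal K_t:=\na{M\in\Sy_n : 0\preceq M\preceq\beta_t\Id}$. I would first record that $\func\in\Funcbb_t^{\text{min-plus}}$ precisely when it equals the infimum of the $q_M$, $M\in\mathcal K_t$, that dominate it, and that for such $\func$ this infimum is \emph{attained} at each point $x_0$: a minimizing sequence of matrices has, by compactness of $\mathcal K_t$ and continuity of $M\mapsto x_0^TMx_0$, a limit $M\in\mathcal K_t$ with $q_{M}\geq\func$ and $q_{M}(x_0)=\func(x_0)$. Now let $\np{\func^k}_{k\in\N}\subset\Funcbb_t^{\text{min-plus}}$ converge pointwise to $\func$ and fix $x_0$. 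Choose $M_k\in\mathcal K_t$ with $q_{M_k}\geq\func^k$ and $q_{M_k}(x_0)=\func^k(x_0)$, extract a Loewner-convergent subsequence $M_k\to M^\star\in\mathcal K_t$, and pass to the pointwise limit: $q_{M^\star}\geq\func$ and $q_{M^\star}(x_0)=\func(x_0)$. Hence $\func=\inf\na{q_M : M\in\mathcal K_t,\ q_M\geq\func}\in\Funcbb_t^{\text{min-plus}}$.

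For the last two items I would first record that, because the basic functions are $2$-homogeneous, adding a constant $\lambda\geq 0$ and applying the twisted operator gives
\begin{equation*}
 \B_t(\func+\lambda)(x)=\inf_{\discret\in\Discret,\,u\in\U}\Bp{c_t^\discret(x,u)+\func\bp{f_t^\discret(x,u)}+\lambda\lVert f_t^\discret(x,u)\rVert^2}\eqfinp
\end{equation*}
Since $D_t^\discret\succ 0$ and $\func\geq 0$, this integrand dominates $\lambda_{\min}(D_t^\discret)\lVert u\rVert^2$, so the infimum is attained and, as $x$ ranges over a compact $K_t$, the minimizing controls stay bounded. \emph{Existence of optimal sets} then follows by taking $K_{t+1}$ to be the compact image of $K_t$ and these bounded controls under the maps $f_t^\discret$: every optimal trajectory issued from $K_t$ lands in $K_{t+1}$, so adding $\indi{K_{t+1}}$ does not change the value on $K_t$. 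For \emph{Additively subhomogeneous operators}, evaluating the display at a minimizer $f^\star$ of $\B_t(\func)$ yields $\B_t(\func+\lambda)(x)\leq \B_t(\func)(x)+\lambda\lVert f^\star\rVert^2$; using the uniform bound $\func(y)\leq\beta_{t+1}\lVert y\rVert^2$ valid for all $\func\in\Funcbb_{t+1}^{\text{min-plus}}$ to control the optimal controls, and hence $\lVert f^\star\rVert^2$, by a constant $M_t$ depending only on $K_t$, gives the claim.

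The main obstacle I anticipate is exactly this interaction between the $\lVert f_t^\discret\rVert^2$ rescaling and the constant shift: unlike the SDDP operator, which is additively homogeneous, here adding $\lambda$ scales by $\lVert f\rVert^2$, so additive subhomogeneity is only available for $\lambda\geq 0$ (the sole case used in \Cref{ConvergenceBellmanImages}) and forces a constant $M_t>1$. The quantitative heart is the uniform boundedness of the minimizing controls, which rests on the two-sided bound $0\leq\func(y)\leq\beta_{t+1}\lVert y\rVert^2$ enjoyed by the elements of $\Funcbb_{t+1}^{\text{min-plus}}$. The other delicate step is the attainment-and-extraction argument for pointwise convergence, which relies essentially on confining the matrices to the compact Loewner interval $\mathcal K_t$.
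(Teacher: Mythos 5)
Most of your verification tracks the paper's own proof: common regularity from the Loewner bound $\beta_t$ (the paper proves it on $\Sphere$, your local-Lipschitz version on balls is if anything closer to what \Cref{Assumptions}-\eqref{CommonRegularity} literally asks), final condition from $\Func_T=\na{\psi_i}_{i\in I_T}$, stability under $\B_t$ from min-additivity plus \Cref{StableInterval}/\Cref{coro:StableBellman}, pointwise optimum by \cref{def_funcbbQu}, order preservation and existence of the value functions as in \Cref{propStructurale_SDDP} and \Cref{FiniteInfimum}. Your compactness-and-attainment argument for stability by pointwise convergence is correct and is a genuine addition: the paper's proof silently omits that item. Your additive subhomogeneity argument is also the same in substance as the paper's (the paper bounds the optimal controls via the Riccati feedback maps $L_i^{\discret}$ and sets $M_t=((\alpha_{t+1}+1)C_t'\|K_t\|)^2$, you get the same boundedness from coercivity $D_t^\discret\succ 0$, $\func\geq 0$), and your observation that the inequality is only obtained for $\lambda\geq 0$ is a fair reading of what the paper's own computation actually establishes and of what \Cref{ConvergenceBellmanImages} uses.

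The genuine gap is in your \emph{existence of optimal sets}. The Bellman operator of this section is the rescaled operator \cref{Bellman-min-plus}: it reads its argument \emph{only} at the normalized point $f_t^{\discret}\np{x,u}/\lVert f_t^{\discret}\np{x,u}\rVert$, which lies on the unit sphere. Hence in $\B_t\np{\func+\lambda+\indi{K_{t+1}}}$ the indicator is evaluated at that normalized point, and your choice of $K_{t+1}$ as the (unnormalized) image of $K_t$ and the bounded optimal controls under $f_t^{\discret}$ does not, in general, contain any of these normalized points; the indicator then contributes $+\infty$ at the very controls you exhibit, and the required inequality $\B_t\np{\func+\lambda+\indi{K_{t+1}}}\leq \B_t\np{\func+\lambda}+\indi{K_t}$ is not obtained. (Your choice would be the right one for the untwisted operator appearing in \cref{DP:linear-quad-switch}, but the structural assumptions must be checked for the operator actually used by the algorithm and by the convergence analysis, namely \cref{Bellman-min-plus}; the two operators agree on $2$-homogeneous functions but not on functions like $\func+\lambda+\indi{K_{t+1}}$.) The repair is immediate and is exactly the paper's one-line argument: since the rescaled operator never evaluates its argument off the sphere, $\B_t\np{\tilde{\func}+\indi{\Sphere}}=\B_t\np{\tilde{\func}}$ identically, so $K_{t+1}:=\Sphere$ works for \emph{every} compact $K_t$ --- this is precisely why the twisted definition \cref{Bellman-min-plus} was introduced and why $\Sphere$ is $(V_t)$-optimal in the sense of \Cref{optimalDraws}.
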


\begin{proof} We prove successively each assumption listed in Assumption~\ref{Assumptions}.

 \noindent $\bullet$ \ref{Assumptions}-\eqref{Stability-pointwise-optimum}.
 By construction, $\Funcbb_t^{\MinPlus}$  in Equation~\eqref{def_funcbbQu} is stable by pointwise infimum.

 \noindent $\bullet$ \ref{Assumptions}-\eqref{Stability-pointwise-convergence} and \ref{Assumptions}-\eqref{CommonRegularity}.
 We will show that every element of $\Funcbb_t^{\MinPlus}$ is $2\beta$-Lipschitz continuous on $\Sphere$. Let $\Func=\left\{ \func_i \right\}_{i \in I} \subset \Funcbb_t^{\MinPlus}$ with $I \subset \N$ and $\func_i \in \Funcb_t^{\MinPlus}$ with associated symmetric matrix $M_i$. Fix $x,y \in \Sphere$, we have successively
 \begin{align}
  \vert V_\Func(x) - V_\Func(y) \vert
   & =  \vert \inf_{i \in I} x^T M_i x - \inf_{i\in I} y^T M_i y  \vert                 \notag                    \\
   & \leq \max_{i \in I} \vert x^T M_i x - y^T M_i y  \vert                              \notag                   \\
   & \leq \max_{i \in I} \vert x^T M_i \left( x-y \right) + y^T M_i\left( x-y \right)  \vert        \notag        \\
   & \leq \max_{i \in I} \vert \langle x + y, M_i(x-y) \rangle \vert \tag{$M^T = M$}            \notag            \\
   & \leq  \Vert x + y \Vert \cdot \max_{i \in I} \Vert M_i\left(x-y\right) \Vert \tag{Cauchy-Schwarz}     \notag \\
   & \leq \Vert x + y \Vert \cdot \max_{i \in I} {\Vert M_i \Vert} \Vert x-y \Vert
  \notag
  \\
   & \leq \beta_t {\Vert x + y \Vert} \cdot  \Vert x-y \Vert
  \tag{$\Vert M_i \Vert \leq \beta_t$}
  \notag
  \\
   & \leq 2 \beta_t \Vert x - y \Vert \eqfinv
 \end{align}
 since $\Vert x + y \Vert \le 2$. Thus, every element of $\Funcbb_t^{\MinPlus}$ is $2\beta_t$-Lipschitz on
 $\Sphere$ and by stability by pointwise infimum, $\Funcbb_t^{\MinPlus}$ is
 stable by pointwise convergence.

 \noindent $\bullet$ \ref{Assumptions}-\eqref{Final-condition}.
 By Assumption~\ref{hypo:linear-quad-switch}, the final cost function $\psi$ is an element of $\Funcbb_T^{\MinPlus}$.

 \noindent $\bullet$ \ref{Assumptions}-\eqref{StabilityBellman}.
 This is given by Corollary~\ref{coro:StableBellman}.

 \noindent $\bullet$ \ref{Assumptions}-\eqref{order-preserving}.
 Proceed as in Proposition~\ref{propStructurale_SDDP}.

 \noindent $\bullet$ \ref{Assumptions}-\eqref{Additively-subhomogeneous}.
 Fix a time step $t\in \ce{0,T-1}$, a compact $K_t \subset \dom(V_t)\, (=\X)$, a function $\func \in \Funcbb_{t+1}^{\MinPlus}$ and a constant $\lambda \geq 0$. By definition of $\Funcbb_{t+1}^{\MinPlus}$, there exists a finite set $F := \left\{ \func_i \right\}_{i\in I} \subset \Funcb_{t+1}^{\MinPlus}$ such that $\func = \inf_{i \in I} \func_i$.
 By Equation~\eqref{equa:inverseRiccati_determinist}, for each $i\in I$ and $\discret \in \Discret$, there exists a linear map $L_i^{\discret}$ such that
 \begin{equation}
  \label{eq:tmp123}
  \min_{u\in \U} c_t^\discret\np{x,u} + \func_i\np{f_t^{\discret}\np{x,u}}= c_t^{\discret} \np{x,L_i^{\discret}(x)} + \func_i\np{f_t^{\discret}\np{x,L_i^{\discret}(x)}}
  \enspace ,
 \end{equation}
 with $\|L_i^{\discret}\|\leq \alpha_{t+1}C_t$,
 where $C_t$ is a constant depending on the parameters of the
 control problem only.
 Hence the maps $x\mapsto  f_t^{\discret}\np{x,L_i^{\discret}(x)}$
 are linear and their norm are bounded by $(\alpha_{t+1}+1)C'_t$
 for some constant $C'_t$ depending on the parameters of the control problem
 only. Set $M_t :=((\alpha_{t+1}+1)C'_t \|K_t\|)^2 $,
 where $ \|K_t\|$ is the radius of a ball centered in $0$ including $K_t$. Therefore, for $x\in K_t$, we have $\lVert f_t^{\discret}\np{x,u}\rVert^2 \leq M_t$. Now, for $x\in K_t$, using the bound on~$f_t$ we have
 \begin{align*}
  \B_t\left(\func + \lambda \right)\np{x}
   & = \min_{\substack{i\in I                                                                                                   \\ u\in \U \\ \discret \in \Discret}}
  c_t^\discret\np{x,u} + \lVert f_t^{\discret}\np{x,u}\rVert^2 \np{\func_i + \lambda}
  \np{ \frac{f_t^{\discret}\np{x,u} }{\lVert f_t^{\discret}\np{x,u}\rVert} }                                                    \\
   & \leq \min_{\substack{i\in I                                                                                                \\ \discret \in \Discret}}
  c_t^{\discret} \np{x,L_i^{\discret}(x)} + \lVert f_t^{\discret}\np{x,L_i^{\discret}(x)}\rVert^2
  \np{\func_i+\lambda} \np{ \frac{f_t^{\discret}\np{x,L_i^{\discret}(x)}}{\lVert f_t^{\discret}\np{x,L_i^{\discret}(x)}\rVert}} \\
   & \leq \min_{\substack{i\in I                                                                                                \\ \discret \in \Discret}}   c_t^{\discret} \np{x,L_i^{\discret}(x)} + \lVert f_t^{\discret}\np{x,L_i^{\discret}(x)}\rVert^2 \func_i
  \np{ \frac{ f_t^{\discret}\np{x,L_i^{\discret}(x)}}{\lVert f_t^{\discret}\np{x,L_i^{\discret}(x)}\rVert} } +M_t \lambda       \\
   & = \min_{\substack{i\in I                                                                                                   \\ \discret \in \Discret}}
  \Bp{c_t^{\discret} \np{x,L_i^{\discret}(x)} +  \func_i  \np{f_t^{\discret}\np{x,L_i^{\discret}(x)}}} +  M_t \lambda
  \\
   & = \min_{\substack{i\in I                                                                                                   \\ u\in \U \\ \discret \in \Discret}}
  \Bp{c_t^\discret\np{x,u} +  \func_i \bp{ f_t^{\discret}\np{x,u} }} + M_t \lambda \tag{by~\eqref{eq:tmp123}}                   \\
   & =  \B_t\np{\func}(x) +  M_t\lambda \eqfinp
 \end{align*}
 hence the desired result \(
 \B_t\left(\func + \lambda \right)\np{x}            \leq  M_t\lambda + \B_t\np{\func}(x)
 \).

 \noindent $\bullet$ \ref{Assumptions}-\eqref{proper-value}.
 This is a consequence of Lemma~\ref{FiniteInfimum}.

 \noindent $\bullet$ \ref{Assumptions}-\eqref{optimal-sets}
 Fix $\func \in \Funcbb_t$ and $\lambda \geq 0$. Denote by $\tilde{\func} = \func + \lambda$. For every $x\in \X$, we have that
 \begin{align*}
  \B_t\np{\tilde{\func}}\np{x} & = \min_{\np{u,v}\in \U \times \Discret} c_t^{\discret}\np{x,u} + \lVert f_t^{\discret}\np{x,u}\rVert^2 \tilde{\func}\np{\frac{f_t^{\discret}\np{x,u}}{\lVert f_t^{\discret}\np{x,u} \rVert}}                       \\
                               & = \min_{\np{u,v}\in \U \times \Discret} c_t^{\discret}\np{x,u} + \lVert f_t^{\discret}\np{x,u}\rVert^2 \np{\tilde{\func} + \indi{\Sphere}}\np{\frac{f_t^{\discret}\np{x,u}}{\lVert f_t^{\discret}\np{x,u} \rVert}} \\
                               & = \B_t\np{\tilde{\func} + \indi{\Sphere}}(x),
 \end{align*}
 which implies the desired result.

\end{proof}

\begin{remark}
 We have shown that $\B_t$ is additively subhomogeneous with constant $M_t$. An upper bound of $M_t$ can be computed as in the proof of Proposition~\ref{StableInterval}, by bounding the greatest eigenvalue of each matrices $L_i^{\discret}$.
\end{remark}

We now define, for any $t\in \ce{0,T}$, a selection functions $\func_t^{\MinPlus}$ and prove that it is a compatible selection function. As each $\argmin$ mentioned below involves a finite set, selecting an element in the $\argmin$ raises no issue.

\begin{proposition}
 \label{QuIsCompatible}
 For every time $t\in \ce{0,T}$, any $\Func \subset \Funcb_t^{\MinPlus}$ and any $x\in \X$,
 define a function $\func_t^{\MinPlus}$ as follows
 \begin{equation}
  \func_t^{\MinPlus}\np{\Func, x}
  \in
  \begin{cases}
   \B_t \Bp{ \argmin_{\func \in \Func} \bp{ \B_t \np{\func}\np{x}}} & \text{for}\quad t\not= T\eqsepv \\
   \argmin_{\psi_i \in \Func} \psi_i(x)                             & \text{for}\quad t=T
   \eqfinv
  \end{cases}
 \end{equation}
 is a \emph{compatible selection} function as defined in Definition~\ref{det:CompatibleSelection}.
\end{proposition}

\begin{proof}
 Fix $t=T$. The function $\func_t^{\MinPlus}$ is tight and valid as $V_T = \psi$. Now fix $t\in \ce{0,T-1}$. Let $\Func \subset \Funcbb_{t+1}^{\MinPlus}$ and $x\in \X$ be arbitrary. We have
 \begin{align*}
  \B_t\np{\vopt_\Func}(x)
   & =  \B_t \bp{ \inf_{\func \in \Func} \func} (x)
  \\
   & = \inf_{(u,\discret)\in \U \times \Discret}
  \Bp{c_t^\discret \np{x, u} + \inf_{\func \in \Func}\func \bp{f_t^\discret\np{x,u}}}
  \\
   & = \inf_{\func \in \Func}\inf_{(u,\discret)\in \U \times \Discret}
  \Bp{c_t^\discret \np{x, u}+  \func \bp{f_t^\discret\np{x,u}}}
  \\
   & = \inf_{\func \in \Func}\bp{ \B_t\left( \func \right)(x)}         \\
   & = \func_t^{\MinPlus}\left( \Func, x \right)\np{x}
  \eqfinp
 \end{align*}
 Thus, $\func_t^{\MinPlus}$ is tight.
 By similar arguments, we have for every $x' \in \X$ that
 \[
  \B_t\np{\vopt_\Func}(x') = \bp{\inf_{\func \in \Func} \B_t\np{\func}}\np{x'} \leq \func_t^{\MinPlus}\np{\Func, x}\np{x'}
  \eqfinp
 \]
 This shows that $\func_t^{\MinPlus}\np{\Func, x}$ is valid and ends the proof.
\end{proof}

We conclude this section by proving the convergence of TDP algorithm in the Min-plus case.
\begin{theorem}[Upper (inner) approximations of the value functions]
 For every $t\in \ce{0,T}$, denote by $\left( V_t^k \right)_{k\in \N}$ the
 sequence of functions generated by \NomAlgo \ with the selection function
 $\func_t^{\MinPlus}$ and the draws made uniformly over the sphere
 $K_t := \Sphere$. Under Assumption~\ref{hypo:linear-quad-switch}, the sequence
 $\left( V_t^k \right)_{k\in \N}$ is non increasing, bounded from below by
 $V_t$ and converges uniformly to $V_t^*$ on $\Sphere$. Moreover, almost surely
 over the draws, $V_t^* = V_t$ on $\Sphere$.
\end{theorem}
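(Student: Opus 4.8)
The plan is to obtain the statement as a direct specialization of the general convergence result \Cref{ConvergenceTheorem} in the case $\opt = \inf$, once all of its hypotheses are checked in the present linear-quadratic-switched setting. The monotonicity and the lower bound are then read off from \Cref{ProprietesFaciles}, the uniform convergence to a limit $V_t^*$ from \Cref{ExistenceLimit}, and the equality $V_t^* = V_t$ on the sphere from \Cref{ConvergenceTheorem} itself.

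First I would record that the structural \Cref{Assumptions} hold, which is exactly \Cref{struct_assump_lin-quad-switch}, and that the maps $\func_t^{\text{min-plus}}$ are compatible selection functions for $\opt = \inf$, which is \Cref{QuIsCompatible}. It then remains to check that the (constant) oracle returning $K_t := \Sphere$ for every $t$, together with the product of the uniform probability measures $\mu_t$ on the spheres, satisfies \Cref{AssumptionOracle}. Since each $V_t$ is a finite infimum of pure quadratic forms by \Cref{FiniteInfimum}, it is finite-valued, so $\dom\np{V_t} = \X$ and in particular $\Sphere \subset \dom\np{V_t}$. By rotational invariance of $\mu_t$, for every $x\in \Sphere$ the quantity $\mu_t\np{B(x,\eta)\cap \Sphere}$ is a positive constant depending only on $\eta$; taking $g_t(\eta)$ to be, say, half this value yields a function $g_t : \R_+^* \to (0,1)$ satisfying the lower bound required by \Cref{AssumptionOracle}.

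Next I would observe that the unit sphere $\Sphere$ is $\np{V_t}$-optimal in the sense of \Cref{optimalDraws}, as already noted below \eqref{Bellman-min-plus}: since the Bellman operator evaluates its argument only at the normalized points $f_t^\discret(x,u)/\lVert f_t^\discret(x,u)\rVert \in \Sphere$, one has $\B_t\np{\func + \indi{\Sphere}} = \B_t\np{\func}$ on all of $\X$, whence $\B_t\np{V_{t+1} + \indi{\Sphere}} + \indi{\Sphere} = \B_t\np{V_{t+1}} + \indi{\Sphere}$. Because the oracle is constant, $K_t^k = \Sphere$ for every $k$, so $K_t^* := \limsup_k K_t^k = \Sphere$, and the optimality hypothesis of \Cref{ConvergenceTheorem} holds deterministically, hence $\mu$-a.s.

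With all hypotheses in place, \Cref{ProprietesFaciles} gives that $\np{V_t^k}_{k\in\N}$ is nonincreasing and bounded below by $V_t$, \Cref{ExistenceLimit} gives $\mu$-a.s.\ uniform convergence of $\np{V_t^k}_{k\in\N}$ to some $V_t^* \in \Funcbb_t^{\text{min-plus}}$ on the compact set $\Sphere$, and \Cref{ConvergenceTheorem} gives $\mu$-a.s.\ that $V_t^* = V_t$ on $K_t^* = \Sphere$. I expect no genuine obstacle beyond the bookkeeping verification of \Cref{AssumptionOracle}; the only mild subtlety is to notice that $\dom\np{V_t}$ is all of $\X$, so that $\Sphere$ is a legitimate compact subset of the domain and the uniform-convergence statements of \Cref{ExistenceLimit} and \Cref{ConvergenceBellmanImages} apply verbatim.
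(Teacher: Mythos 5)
Your proposal is correct and follows essentially the same route as the paper: invoke \Cref{struct_assump_lin-quad-switch} for the structural assumptions, \Cref{QuIsCompatible} for compatibility of $\func_t^{\text{min-plus}}$, the $\np{V_t}$-optimality of $\Sphere$ (noted below \cref{Bellman-min-plus}), and then conclude via \Cref{ProprietesFaciles}, \Cref{ExistenceLimit} and \Cref{ConvergenceTheorem}. Your explicit verification of \Cref{AssumptionOracle} for the uniform measure on the sphere (via rotational invariance) and the observation that the constant oracle makes $K_t^* = \limsup_k K_t^k = \Sphere$ deterministically are details the paper leaves implicit, so they are a welcome addition rather than a deviation.
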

\begin{proof}
 As the structural Assumption~\ref{Assumptions} are satisfied, as the functions
 $\func_t^{\MinPlus}$, $0\leq t\leq T$ are compatible selections and the
 unit sphere $\Sphere$ is $V_t$-optimal (case $\optop = \inf$), we can apply Theorem \ref{ConvergenceTheorem}.
\end{proof}

\subsection{Optimal trajectories for upper approximations may not converge}
\label{theExample}
We now give an example showing that approximating from above may fail to converge when the
points are drawn along optimal trajectories for the current upper
approximations of $V_t$ (in contrast with Section~\ref{SDDP_Example} where we approximate from
below $V_t$). As shown by Proposition~\ref{HomogeneVSnonhomogene} there is no loss of
generality in considering the framework of \S\ref{homogeneous_case} but with
non-homogeneous functions.

\begin{figure}
 \centering
 \includegraphics[scale=0.7]{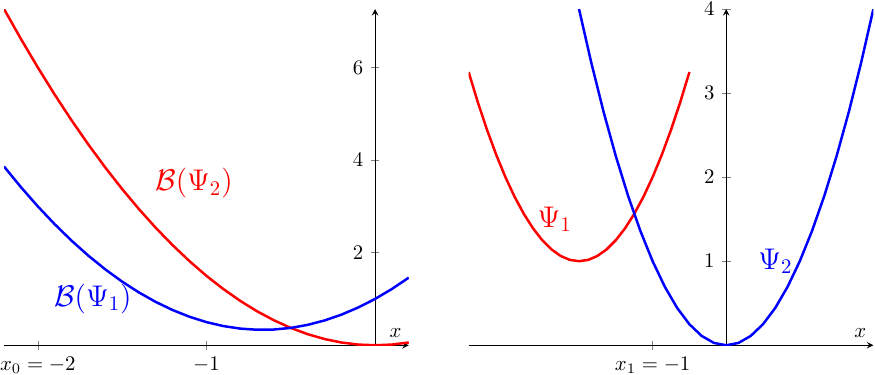}
 \caption{\label{ex:figure}Illustration of the multistage (two stages here)
  optimization problem studied in \S\ref{theExample}. The final cost function,
  $\psi = \inf\np{\psi_1, \psi_2}$ is shown in the right subfigure and
  displays of the images, by the Bellman operator $\B$, of the functions
  $\psi_1$ and $\psi_2$ are drawn in the left subfigure. We have that
  $\B(\psi_2)(x_0)> \B(\psi_1)(x_0)$. At the final time $t=1$, the ``best
  function'' at the point $-1$ is $\psi_2$. The image by the $k$-th optimal
  dynamic of $x_0 = -2$ is $x_1=-1$.}
\end{figure}

We consider a (non-homogeneous) problem with only two time steps, that is $T=1$ and $t \in \na{0,1}$ such that

\noindent $\bullet$ The state space $\X$ and the control space $\U$ are equal to $\R$.

\noindent $\bullet$  The linear dynamic is $f(x,u) = x + u$.

\noindent $\bullet$  The quadratic cost is $c(x,u) = x^2 + u^2$.

\noindent $\bullet$  The final cost function is the infimum, $\psi = \inf\np{\psi_1, \psi_2}$, of two given quadratic mappings,
$\psi_1(x)= (x+2)^2 +1$ and $\psi_2(x)= x^2$.

The Bellman operator $\B$, associated to this multistage optimization problem is
defined for every $\func : \X \to \overline{\R}$ and every $x\in \X$ by
\[
 \B\np{\func}(x) = \min_{u\in U}\bp{x^2 + u^2 + \func\np{x+u}} = x^2 + \min_{u\in U}\bp{u^2 + \func\np{x+u}}
 \eqfinp
\]
For the case where $\func_{a,b}\np{\cdot} = \np{\cdot+a}^2 + b$ with $a,b \in \R$ one has for every $x\in \R$
\begin{equation}
 \label{ex:image_bellman}
 \B\np{\func_{a,b}}\np{x} = \frac{3}{2}x^2 + ax + b.
\end{equation}
Fix $x_0 = x_0^k = -2$ for every $k\in \N$. As described in
Algorithm~\ref{\NomAlgo}, the approximations of the value functions $V_1$ and
$V_0$ are initialized to $+\infty$. Thus every control $u \in \U$ is optimal in
the sense that $u \in \argmin_{u' \in \U} x^2 + (u')^2 + \func\np{x+u'}$. Hence
if we set $x_1^0 := -1 = f(x_0, 1)$ then $(x_0,x_1^0)$ is an optimal trajectory
as described in Proposition~\ref{OptimalityOfTrajectories}.

We deduce from Equation~\eqref{ex:image_bellman} the following facts, illustrated in Figure~\ref{ex:figure}.
\begin{enumerate}
 \item The image of $\psi_2$ is strictly greater than the image of $\psi_1$ by
       the Bellman operator $\B$, \emph{i.e.}
       \[
        \B\np{\psi_2}(-2) > \B\np{\psi_1}\np{-2}.
       \]
 \item The image by the $k$-th optimal dynamic of $-2$ is $-1$, \emph{i.e.}
       setting $u_0^k := \argmin_{u' \in \U} (-2)^2 + (u')^2 + V_1^k \np{-2+u'}$
       (the $\argmin$ is here a singleton) one has
       \[
        f\np{-2, u_0^k} = -1.
       \]
 \item At the final step $t=1$, the best function at the point $-1$ is $\psi_2$, \emph{i.e.}
       \[
        \psi(-1) = \inf\np{\psi_1(-1), \psi_2(-2)} = \psi_2\np{-2}.
       \]
\end{enumerate}

From those three facts, one can deduce starting $x_0 = -2$ and $x_1= -1$, the
optimal trajectory for the current approximations will always be sent to
$x_1=-1$. But, as shown in the proof of Proposition~\ref{QuIsCompatible} one can
show that the image by $\B$ of an infimum is the infimum of the images by $\B$:
\[
 V_0(-2) = \B\np{ \inf \np{\psi_1, \psi_2} }(-2) = \inf\np{  \B\np{  \psi_1 }(-2) , \B \np{ \psi_1 }(-2)}.
\]
Thus for every $k \in \N$, we have
\(
V_0(-2) =  \B\np{  \psi_1 }(-2) < \B \np{ \psi_1 }(-2) = V_0^k\np{-2}
\). The constant sequence $V_0^k\np{-2}$ fails to converge to $V_0\np{-2}$.

\section{Numerical experiments on a toy example}
\label{sec:ToyExample}

In \S\ref{constrained-linquad}, we propose a toy optimization Problem~\eqref{ConstrainedLQ}  on which we run TDP-SDDP and TDP-Minplus. Problem~\eqref{ConstrainedLQ} falls in the framework described in Section~\ref{SDDP_Example}. Thus, we are able to obtain lower approximations of $V_t$ using TDP-SDDP. TDP-Minplus cannot be applied directly. We apply a ``discretization'' step to Problem~\eqref{ConstrainedLQ} (see \S\ref{Discretization}) which yields Problem~\eqref{SwitchedConstrainedLQ} parameterized by an integer $N > 0$. Then we apply to Problem~\eqref{SwitchedConstrainedLQ} an ``homogenization'' step (see \S\ref{Homogeneization}) to obtain Problem~\eqref{HomogeneizedSwitchedConstrainedLQ}. The value functions $V_t$ of the original Problem~\eqref{ConstrainedLQ} are bounded from above by $\vtilden$, the value functions of Problem~\eqref{HomogeneizedSwitchedConstrainedLQ}. We apply TDP-Minplus (described in Section~\ref{sec:Exemples_switch}) to Problem~\eqref{HomogeneizedSwitchedConstrainedLQ} which gives upper approximations of $\vtilden$ and \emph{a fortiori}, of $V_t$. In \S\ref{numerical_experiments}, we show numerical experiments which show the convergence of this approximation scheme to $V_t$.

\subsection{A toy example: constrained linear-quadratic framework}
\label{constrained-linquad}
Let $\vmin,\vmax$ be two given reals such that $\vmin < \vmax$, we study the
following multistage linear quadratic problem involving a constraint on one of
the controls:
\begin{subequations}
 \label{ConstrainedLQ}
 \begin{align}
  \min_{(x,u,v) \in \X^{T+1} \times \U^{T} \times [\vmin,\vmax]^{T} }
   & \sum_{t=0}^{T-1} c_t(x_t, u_t,v_t) + \psi(x_T)
  \\
  \text{s.t.}
   & \quad x_0 \in \X \ \text{given, and}\quad
  \forall t \in \ce{0,T-1}, \ x_{t+1} = f_t(x_t, u_t, v_t)   \eqfinv
 \end{align}
\end{subequations}
where $\X=\R^n$ and $\U=\R^m$, with quadratic convex costs functions of the
form \[c_t(x,u,v) = x^T C_t x + u^T D_t u + v^2 d_t,\] where $C_t \in\Sn_n$,
$D_t \in \Snp_n$ and $d_t>0$, linear dynamics
$f_t(x,u, v) = A_t x + B_t u + v b_t$, where $A_t$ (resp. $B_t$) is a
$n \times n$ (resp. $n \times m$) matrix, $b_t \in \X$, and final cost function
$\psi := x^T M x$ with $M \in \Snp_n$.


For every $t\in \ce{0,T}$, the value function $V_t$ is $L_t$-Lipschitz
continuous and convex. Moreover the Lipschitz constant $L_t > 0$ can be
explicitly computed. As done in Section~\ref{SDDP_Example} we will generate
lower approximations of the value functions $V_t$ through compatible selection
functions $\np{\func_t^{\SDDP}}_{t\in \ce{0,T}}$. In this example, the
structural Assumption~\ref{Assumptions} are not satisfied as the sets of states
and controls are not compacts. As we will still observe convergence of the lower
approximations generated by \nomalgo
$\np{\func_t^{\SDDP}}_{t\in \ce{0,T}}$ to the value functions, this
suggests that the (classical) framework presented in Section~\ref{SDDP_Example}
can be extended. This will be the object of a future work and here we would like
to stress on the numerical scheme and results.

\subsection{Discretization of the constrained control}
\label{Discretization}

We approximate Problem~\eqref{ConstrainedLQ} by discretizing the constrained
control in order to obtain an unconstrained switched multistage linear
quadra\-tic problem. More precisely, we fix an integer $N \geq 2$, set
$ v_i = \vmin + i\frac{\vmax-\vmin}{N-1}$ for every $i \in \ce{0,N-1}$ and set
$\V:= \left\{v_0, v_1, \ldots v_{N-1} \right\}$.  Then, we define the following
unconstrained switched multistage linear quadratic problem:
\begin{subequations}
 \label{SwitchedConstrainedLQ}
 \begin{align}
  \min_{(x,u,v) \in \X^{T+1} \times \U^{T} \times \V^{T} }
   & \sum_{t=0}^{T-1} c_t^{v_t} (x_t, u_t) + \psi(x_T)
  \\
  \text{s.t.}
   & \quad x_0 \in \X \ \text{given, and}\quad
  \forall t \in \ce{0,T-1}, \ x_{t+1} = f_t^{v_t}(x_t, u_t)\eqfinv
 \end{align}
\end{subequations}
where for every $v\in \V$, $f_t^v = f_t\np{\cdot, \cdot, v}$ and
$c_t^{v} = c_t\np{\cdot, \cdot, v}$.  As the set of controls of Problem
\eqref{ConstrainedLQ} contains the set of controls of Problem
\eqref{SwitchedConstrainedLQ}, upper approximations of the value functions of
Problem \eqref{SwitchedConstrainedLQ} will are give upper approximations of the
value functions of Problem \eqref{ConstrainedLQ}. Thus we will construct upper
approximations for Problem~\eqref{SwitchedConstrainedLQ}.

\subsection{Homogenization of the costs and dynamics}
\label{Homogeneization}
We add a dimension to the state space in order to homogenize the costs and
dynamics, when a sequence of switching controls is fixed. Define the following
homogenized costs and dynamics for every $t\in \ce{0,T-1}$ by:
\begin{align*}
 \tilde{f_t}^v\np{x,y,u} & = \begin{pmatrix} A_t & vb_t \\ 0 & 1 \end{pmatrix} \begin{pmatrix} x \\ y \end{pmatrix} + \begin{pmatrix} B_t \\ 0 \end{pmatrix} u,             \\[1ex]
 \tilde{c_t}^v (x,y,u)   & = \begin{pmatrix} x \\ y \end{pmatrix}^T \begin{pmatrix} C_t & 0 \\ 0 & v^2 d_t \end{pmatrix}  \begin{pmatrix} x \\ y \end{pmatrix} + u ^T D_t u,
\end{align*}
And as the final cost function is already homogeneous,
$\widetilde{\psi}(x,y) = \begin{pmatrix} x \\ y \end{pmatrix}^T \begin{pmatrix}
  M & 0 \\ 0 & 0 \end{pmatrix} \begin{pmatrix} x \\ y \end{pmatrix}$. Using
these homogenized functions we define a multistage optimization problem with one
more (compared to Problem~\eqref{SwitchedConstrainedLQ}) dimension on the state
variable:
\begin{equation}
 \label{HomogeneizedSwitchedConstrainedLQ}
 \begin{aligned}
   & \min_{{(x,y,u,v) \in \X^{T+1} \times \R^{T+1} \times \U^T \times \V^{T}}}  \sum_{t=0}^{T-1} \tilde{c_t}^{v_t} (x_t, y_t, u_t) + \widetilde{\psi}(x_T, y_T) \\
   & \text{s.t.}  \   \left\{
  \begin{aligned}
    & (x_0, y_0) \in \X \times \R \ \text{is given,}                                           \\
    & \forall t \in \ce{0,T-1}, \ (x_{t+1},y_{t+1}) = \tilde{f_t}^{v_t}(x_t, y_t, u_t) \eqfinp \\
  \end{aligned}\right.
 \end{aligned}
\end{equation}

One can deduce the value functions $V_{t,N}$ of the multistage optimization
problem \eqref{SwitchedConstrainedLQ} (with non-homogeneous costs and dynamics)
from the value functions $\vtilden$ of~\eqref{HomogeneizedSwitchedConstrainedLQ}
(with homogeneous costs and dynamics) by
Proposition~\ref{HomogeneVSnonhomogene}. For every $x\in \X$, we have that
\begin{equation}
 \label{Homogeneisation}
 V_{t,N}(x) = \vtilden(x,1) \eqfinp
\end{equation}

For every time step $t\in\ce{0,T}$ the value function $\vtilden$ solution of
Problem~\eqref{HomogeneizedSwitchedConstrainedLQ} is $2$-homogeneous. That is,
for every $(x,y)\in \X\times \R$ and every $\lambda \in \R$, we have
$ \vtilden\left(\lambda x, \lambda y \right) = \lambda^2 \vtilden \left( x, y
 \right).$ This will allow us to restrict the study of the value functions to the
unit sphere, which is compact.

\subsection{Min-plus upper approximations of the value functions of Problem~\eqref{HomogeneizedSwitchedConstrainedLQ}}
\label{qu-sec}

We apply the results of Section~\ref{sec:Exemples_switch} as follows.
Let $v\in \V$ be a given switching control, in this framework, the operator $\B_t^v$ is defined as in Section~\ref{sec:Exemples_switch} but with an augmented state. More precisely, for every function $\func : \X \times \R \to \overline{\R}$ and every point $(x,y) \in \X \times \R$:
\begin{align*}
 \B_t^v \left( \func \right)(x,y) = \inf_{u\in \U} \tilde{c}_t^v(x,y, u) + \lVert \tilde{f}_t^v(x, y, u) \rVert^2\func\np{\frac{\tilde{f}_t^v(x, y, u)}{\lVert \tilde{f}_t^v(x, y, u) \rVert}} \eqfinp
\end{align*}
Then,  for every time $t\in \ce{0,T-1}$,
the Dynamic Programming operator $\B_t $ associated to Problem~\eqref{HomogeneizedSwitchedConstrainedLQ}
satisfies $\B_t \left( \func \right) := \inf_{v\in \V} \B_t^v  \left( \func \right)$.

A key property of the operators $\B_t^v$ and $\B_t$ is that they are
min-additive, meaning that for every functions
$\func_1, \func_2 : \X \to \overline{\R}$ one has:
\[
 \B_t^v \bp{ \inf\np{\func_1, \func_2} } = \inf\bp{\B_t^v\np{\func_1}, \B_t^v\np{\func_2}}\enspace ,
\]
and a similar equation for $\B_t$. Moreover, by Riccati formula (see
Equation~\eqref{equa:Riccati_determinist}), the image of a convex quadratic
function by $\B_t^v$ is also a convex quadratic function.

Lemma~\ref{FiniteInfimum} suggests
to use the following set of basic functions:
\[
 \Funcb_t^{\MinPlus} := \Func_t \ \text{ and } \ \Funcbb_t^{\MinPlus} := \left\{ \vopt_\Func \ \Big\vert \ \Func \subset \Funcb_t^{\MinPlus} \right\} \eqfinp
\]
As done in Section~\ref{sec:Exemples_switch}, one could also have considered as
basic functions the quadratic functions bounded in the Loewner sense between $0$
and $\alpha_t \id$, where $\alpha_t > 0$, $t \in \ce{0,T}$, are real numbers
such that, if $\func$ is a quadratic form bounded between $0$ and
$\alpha_{t+1}\id$, then $\B_t^v\np{\func}$ is bounded between $0$ and
$\alpha_t\id$.

Moreover, using the aforementioned properties, one will be able to compute
$\B_t^v\np{ \vopt_\Func}$ for a given switching control $v$ and
$\B_t\np{ \vopt_\Func}$, for any finite set $F$ of convex quadratic functions.
Therefore, given a time $t\in \ce{0,T-1}$, we define the selection function
$\func_t^{\MinPlus}$ as follows.  For any given
$\Func \subset \Funcb_{t+1}^{\MinPlus}$ and $(x,y)\in \X\times \R$,
\begin{align*}
  & \func_t^{\MinPlus}\left(\Func, x, y \right) =\B_t^v(\func)                                                               \\
  & \text{for some}\quad (v,\func)\in \argmin_{(v,\func)\in \V\times \Func} \B_t^v \left( \func \right) \left( x, y \right).
\end{align*}
Moreover, at time $t=T$, for any $\Func \subset \Funcb_T^{\MinPlus}$ and $(x,y) \in \X \times \R$, we set
\[
 \func_t^{\MinPlus}\left(\Func, x,y \right) = \widetilde{\psi}(x,y) = \psi(x).
\]

Motivated by the 2-homogeneity of the value functions, the random draws of TDP
for the basic functions $\Funcb_t^{\MinPlus}$, $1 \leq t \leq T$ and the
selection functions $\func_t^{\MinPlus}$ will be made uniformly on the
unit euclidean sphere, which satisfies
Definition~\ref{AssumptionOracle}. Indeed, by $2$-homogeneity, it is enough to
know the value functions of \eqref{HomogeneizedSwitchedConstrainedLQ} on the
sphere to know them on the whole state space.

\subsection{Upper and lower approximations of the value functions}

For a large number of discretization points $N$ (defined in
\S\ref{Discretization}), one would expect that the value functions $V_{t,N}$
of~\eqref{SwitchedConstrainedLQ} approximate the value functions $V_t$
of~\eqref{ConstrainedLQ}.
Indeed, one can show that for every time step $t\in \ce{0,T}$, the approximation
error is bounded by $C_t T/N^2$ in $\X$, for some constant $C_t > 0$.
%
Thus, for large $N$, we have $V_{t,N} \approx V_t$ and by Equation~\eqref{Homogeneisation}, for every $N \geq 2$, we have
\[
 \vtilden\np{\cdot, 1} = V_{t,N}\geq V_t.
\]
In the following Proposition we approximate $\vtilden$ from above by a min-plus
algorithm and $V_t$ from below by SDDP
and using the convergence result of Theorem~\ref{ConvergenceTheorem} (admitting that the result still holds for SDDP in this framework), we obtain the following one.
%
\begin{theorem}
 \label{ConvergenceTheoremExample}
 For every $t\in \ce{0,T}$, denote by
 $\left( \overline{V}_t^k \right)_{k\in \N}$ (resp.
 $\np{ \underline{V}_t^k}_{k\in \N}$) the sequence of functions generated by TDP
 with the selection function $\func_t^{\MinPlus}$
 (resp. $\func_t^{\SDDP}$) and the draws made uniformly over the euclidean
 sphere of $\X\times \R$ (resp. made as described in
 Proposition~\ref{OptimalityOfTrajectories}).

 Then the sequence $\left( \overline{V}_t^k \right)_{k\in \N}$
 (resp. $\np{\underline{V}_t^k}_{k \in \N}$) is non-increasing
 (resp. non-decreasing), bounded from below (resp. above) by $\vtilden$
 (resp. $V_t$) and converges uniformly to $\vtilden$ (resp. $V_t$) on any
 compact subset of $\X\times \R$ (resp. $K_t^*$ defined in
 Proposition~\ref{OptimalityOfTrajectories}).
\end{theorem}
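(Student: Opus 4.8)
The two sequences $\np{\overline{V}_t^k}_{k\in\N}$ and $\np{\underline{V}_t^k}_{k\in\N}$ are produced by two independent runs of \nomalgo: one with $\opt=\inf$ and the selection functions $\func_t^{\text{min-plus}}$, the other with $\opt=\sup$ and the selection functions $\func_t^{\text{SDDP}}$. The plan is therefore to treat the two runs separately, each time reducing the claim to the corresponding example theorem established in \Cref{sec:Exemples_switch} and \Cref{SDDP_Example}, and then to propagate the conclusions through the homogeneization identity~\cref{Homogeneisation}.

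For the upper approximations, I would first observe that the homogeneized problem~\eqref{HomogeneizedSwitchedConstrainedLQ} is exactly of the form studied in \Cref{sec:Exemples_switch}: its state space is $\X\times\R$, its switching set is $\V$, the dynamics $\tilde{f}_t^{v}$ are linear, the costs $\tilde{c}_t^{v}$ are pure convex quadratic forms without mixing term, and the final cost $\widetilde\psi$ is a pure convex quadratic form. Hence \Cref{hypo:linear-quad-switch} holds for~\eqref{HomogeneizedSwitchedConstrainedLQ}, the value functions $\vtilden$ are $2$-homogeneous, and by the argument of \Cref{struct_assump_lin-quad-switch} the structural \Cref{Assumptions} are satisfied. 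The map $\func_t^{\text{min-plus}}$ defined in~\S\ref{qu-sec} is a compatible selection function for $\opt=\inf$ by the same computation as in \Cref{QuIsCompatible}, relying on the min-additivity of the operators $\B_t^{v}$. Because $\vtilden$ is $2$-homogeneous, the unit sphere of $\X\times\R$ is $\np{\vtilden}$-optimal in the sense of \Cref{optimalDraws}, and the uniform law on that sphere meets \Cref{AssumptionOracle}. \Cref{ConvergenceTheorem} in the case $\opt=\inf$ then yields that $\np{\overline{V}_t^k}_{k}$ is non-increasing, bounded below by $\vtilden$, converges uniformly on the sphere, and equals $\vtilden$ there $\mu$-a.s. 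Since every iterate and $\vtilden$ are $2$-homogeneous, both the a.s.\ equality and the uniform convergence propagate from the sphere to every compact subset of $\X\times\R$, which is the asserted conclusion for the inner approximations.

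For the lower approximations, the natural route is \Cref{SDDP_Example}: $\func_t^{\text{SDDP}}$ is a compatible selection function for $\opt=\sup$, the draws are made along the optimal trajectories of the current approximations, and by \Cref{OptimalityOfTrajectories} the sets $K_t^*=\limsup_k K_t^k$ are $\np{V_t^*}$-optimal, so that the convergence hypothesis of \Cref{ConvergenceTheorem} in the case $\opt=\sup$ is in force. Invoking that theorem gives a non-decreasing sequence $\np{\underline{V}_t^k}_{k}$, bounded above by $V_t$, converging uniformly on every compact of $\dom\np{V_t}$ to $V_t^*$, with $V_t^*=V_t$ on $K_t^*$ almost surely over the draws.

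The one genuine gap, which the statement itself flags by the parenthetical ``admitting that the result still holds for SDDP in this framework'', is that \Cref{hypo:linear-convex} is violated here: the state and control spaces of~\eqref{ConstrainedLQ} are not compact, so the structural \Cref{Assumptions}, and in particular the existence of optimal sets and the uniform bounds underlying \Cref{ExistenceLimit} and \Cref{ConvergenceBellmanImages}, are not established in this unbounded setting. The main obstacle is thus not the assembly of the two parts, which is routine once the two example theorems are in hand, but the extension of the equicontinuity and uniform-boundedness arguments feeding \Cref{ConvergenceTheorem} to the non-compact constrained linear-quadratic problem. Here I would simply admit the SDDP convergence, as announced, and defer its rigorous justification to the promised future work.
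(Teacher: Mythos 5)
Your proposal is correct and follows essentially the same route as the paper, which states this theorem without a separate formal proof: the result is assembled from \Cref{ConvergenceTheorem}, applied on one side through the min-plus framework of \Cref{sec:Exemples_switch} to the homogeneized problem~\eqref{HomogeneizedSwitchedConstrainedLQ} (with $2$-homogeneity carrying the conclusion from the unit sphere to all compact subsets of $\X\times\R$), and on the other side through the SDDP framework of \Cref{SDDP_Example} with \Cref{OptimalityOfTrajectories}, while the non-compactness gap on the SDDP side is explicitly admitted and deferred to future work, exactly as you do.
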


\subsection{Numerical experiments}
\label{numerical_experiments}


The following data was used as a specific case of \eqref{ConstrainedLQ}. For every time $t\in \ce{0,T-1}$,
\begin{align*}
 A_t & = (1-0.1)\Id \quad & B_t & = \begin{pmatrix} 1 & \cdots & 1 \\ \vdots &  & \vdots \\ 1 & \cdots & 1 \end{pmatrix} \quad & b_t & = \begin{pmatrix} 1 \\ \vdots \\ 1\end{pmatrix} \\
 C_t & = 0.1 \Id \quad    & D_t & = 0.1 \Id \quad                     & d_t & = 0.1.
\end{align*}
The time horizon is $T = 15$, the states are in $\X = \R^n$ with $n = 25$, the
unconstrained continuous controls are in $\U = \R^m$ with $m = 3$, the
constrained continuous control is in $[\vmin, \vmax]$,
with 
$[\vmin,\vmax]=[1,5]$ in the first example and $[\vmin,\vmax]=[-3,5]$ in the
second one.  Moreover, we start from the initial point
$x_0 = 0.2 \; (1 , \ldots , 1 )^T$ when TDP is applied with the selection
function $\func_t^{\SDDP}$ and the number of discretization points $N$ is
varying from $5$ to $200$, for TDP with the selection function
$\func_t^{\MinPlus}$.  In Figures~\ref{FirstExample}
and~\ref{SecondExample}, we give graphs representing the values
$\underline{V}_t^k\np{x_t^k}$ and $\overline{V}_t^k\np{x_t^k,1}$ at each time
step $t\in \ce{0,T-1}$ where the sequence of states $\np{x_t^k}_{k\in \N}$ is
the optimal trajectory for the current lower approximations
$\np{\underline{V}_t^k}_{k\in \N}$ defined in \eqref{OptimalityOfTrajectories}.
From Theorem~\ref{ConvergenceTheoremExample}, we know that for every
$t \in \ce{0,T-1}$ the gap
$\overline{V}_t^k\np{x_t^k,1} - \underline{V}_t^k\np{x_t^k}$ should be close to
$0$ as $k$ increases assuming that $N$ is large enough to have
$V_t \approx V_{t,N}$.

\begin{figure*}
 \centering
 \includegraphics[scale=1]{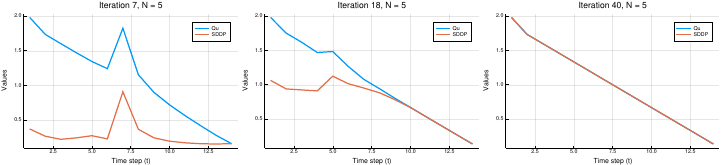}
 \caption{\label{FirstExample} First example for $\vmin=1$, $\vmax=5$ with
  $N = 5$ after $7$ iterations (left), $18$ iterations (middle) and $40$
  iterations (right).}
\end{figure*}

\begin{figure*}
 \centering
 \includegraphics[scale=1]{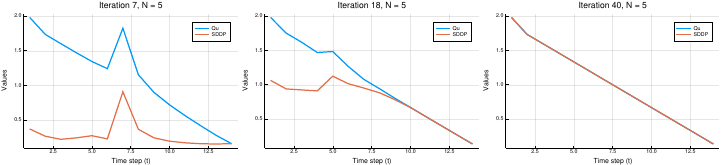}
 \caption{Second example for $\vmin=-3$, $\vmax=5$ with varying $N = 5$ (left),
  $N = 50$ (middle) and $N = 200$ (right) after $20$ iterations.}
 \label{SecondExample}
\end{figure*}

\begin{figure*}
 \centering
 \includegraphics[scale=1]{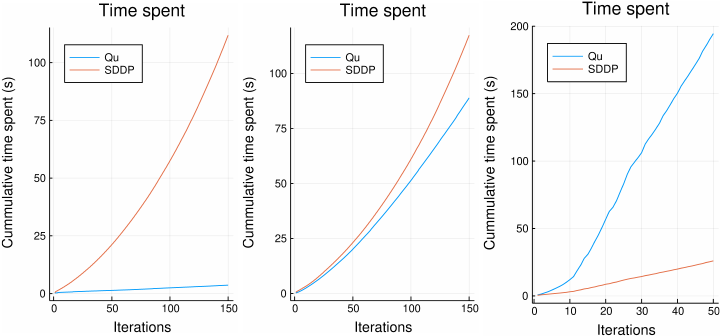}
 \caption{\label{TimePlots} Time spent for the first example (left) and the
  second example when $N=50$ (middle) and $N=200$ (right).}
\end{figure*}

On those two examples, we exhibit two convergence behaviors. On the first
example, the constrained control has to be greater than $1$, thus avoiding $0$
which would have been (or almost) the optimal control if there were no
constraint. The optimal constrained control is the projection on
$\U \times [\vmin,\vmax]$ of the optimal unconstrained control, thus the
switching control is most of the time equal to the lower bound $\vmin=1$.

From this observation we deduce two properties. First, the upper approximation
given by Qu algorithm is good, even for a small $N$, as the optimal switch is
(most of the time) equal to $\vmin$. Second, this implies that at iteration $k$,
the set $\Func_t^k$ is of small cardinality.

Moreover, in this example the number of switches is $N = 5$ thus few
computations of $\B_t^v\np{\func}(x)$ need to be done in order to compute
$\B_t\np{\func}(x)$ for some $x \in \X$ and $\func \in \Func_t^k$.  Thus, as
shown on the left of Figure~\ref{TimePlots}, the computation time of an
iteration of Qu's min-plus algorithm is small compared to SDDP which does not
exploit this property.

On the second example, the constrained control is in an interval containing
$0$. The switching control often changes and this means more computations. A
compromise between computational time and precision can be achieved (see
Figure~\ref{TimePlots}) in order to make the computational time of Qu algorithm
similar to the one of SDDP algorithm.

\section*{Conclusion}
In this article we have devised an algorithm, \NomAlgo, that encompasses both a
discrete time version of Qu's min-plus algorithm and the SDDP algorithm in the
deterministic case. We have shown in the last section that \NomAlgo \ can be
applied to two natural frameworks: one for min-plus and one for SDDP. In the
case where both framework intersects, one could apply \NomAlgo \ with the
selection functions $\func_t^{\MinPlus}$ and get non-increasing upper
approximations of the value function. Simultaneously, by applying \NomAlgo \
with the selection function $\func_t^{\SDDP}$, one would get
non-decreasing lower approximations of the value function. Moreover, we have
shown that the upper approximations are, almost surely, asymptotically equal to
the value function on the whole space of states $\X$ and that the lower
approximations are, almost surely, asymptotically equal to the value function on
a set of interest.

Thus, in those particular cases we get converging bounds for $V_0(x_0)$, which
is the value of the multistage optimization Problem~\ref{MultistageProblem},
along with asymptotically exact minimizing policies. In those cases, we have
shown a possible way to address the issue of computing efficient upper bounds
when running the SDDP algorithm by running in parallel another algorithm (namely
\nomalgo \ with min-plus selection functions).

In Section~\ref{sec:ToyExample} we studied a way to simultaneously build lower
and upper approximations of the value functions using the results of the
previous sections. However the discretization and homogenization scheme that
was described is rapidly limited by the dimension of the control space, due to
the need to discretize the constrained controls. We will provide in a future
work, a systematic scheme to use simultaneously SDDP and a min-plus methods
which is more efficient numerically and does not rely on discretization of the
control space. Moreover we will extend the current framework to multistage
stochastic optimization problems with finite white noises.

\bibliographystyle{plain}

\begin{thebibliography}{10}

\bibitem{Au.Ek1984}
Jean~Pierre Aubin and Ivar Ekeland.
\newblock {\em Applied Nonlinear Analysis: {{Jean}}-{{Pierre Aubin}} and {{Ivar
  Ekeland}}}.
\newblock Pure and Applied Mathematics: {{A Wiley}}-{{Interscience}} Series of
  Texts, Monographs, and Tracts. {Wiley}, {New York}, 1984.

\bibitem{Be1954}
Richard Bellman.
\newblock The theory of dynamic programming.
\newblock {\em Bulletin of the American Mathematical Society}, 60(6):503--515,
  November 1954.

\bibitem{Be2016}
Dimitri~P. Bertsekas.
\newblock {\em Dynamic Programming and Optimal Control}, volume~1 of {\em
  Athena Scientific Optimization and Computation Series}.
\newblock {Athena Scientific}, {Belmont, Mass}, fourth edition, 2016.

\bibitem{Ci1989}
Philippe~G. Ciarlet.
\newblock {\em Introduction to {{Numerical Linear Algebra}} and
  {{Optimisation}}:}.
\newblock {Cambridge University Press}, first edition, August 1989.

\bibitem{Dr2002}
Stuart Dreyfus.
\newblock Richard {{Bellman}} on the {{Birth}} of {{Dynamic Programming}}.
\newblock {\em Operations Research}, 50(1):48--51, February 2002.

\bibitem{Gi.Le.Ph2015}
P.~Girardeau, V.~Leclere, and A.~B. Philpott.
\newblock On the {{Convergence}} of {{Decomposition Methods}} for {{Multistage
  Stochastic Convex Programs}}.
\newblock {\em Mathematics of Operations Research}, 40(1):130--145, February
  2015.

\bibitem{Gu2014}
Vincent Guigues.
\newblock {{SDDP}} for some interstage dependent risk-averse problems and
  application to hydro-thermal planning.
\newblock {\em Computational Optimization and Applications}, 57(1):167--203,
  January 2014.

\bibitem{Gu.Ro2012}
Vincent Guigues and Werner R{\"o}misch.
\newblock Sampling-{{Based Decomposition Methods}} for {{Multistage Stochastic
  Programs Based}} on {{Extended Polyhedral Risk Measures}}.
\newblock {\em SIAM Journal on Optimization}, 22(2):286--312, January 2012.

\bibitem{La.Ro1995}
Peter Lancaster and L.~Rodman.
\newblock {\em Algebraic {{Riccati}} Equations}.
\newblock Oxford Science Publications. {Oxford University Press}, 1995.

\bibitem{Mc2007}
William~M. McEneaney.
\newblock A {{Curse}}-of-{{Dimensionality}}-{{Free Numerical Method}} for
  {{Solution}} of {{Certain HJB PDEs}}.
\newblock {\em SIAM Journal on Control and Optimization}, 46(4):1239--1276,
  January 2007.

\bibitem{Pe.Pi1991}
M.~V.~F. Pereira and L.~M. V.~G. Pinto.
\newblock Multi-stage stochastic optimization applied to energy planning.
\newblock {\em Mathematical Programming}, 52(1-3):359--375, May 1991.

\bibitem{Qu2013}
Zheng Qu.
\newblock {\em Nonlinear {{Perron}}-{{Frobenius}} Theory and Max-plus Numerical
  Methods for {{Hamilton}}-{{Jacobi}} Equations}.
\newblock PhD thesis, Ecole Polytechnique X, October 2013.

\bibitem{Qu2014}
Zheng Qu.
\newblock A max-plus based randomized algorithm for solving a class of {{HJB
  PDEs}}.
\newblock In {\em 53rd {{IEEE Conference}} on {{Decision}} and {{Control}}},
  pages 1575--1580, December 2014.

\bibitem{Ro.We2009}
Ralph~Tyrrell Rockafellar and Roger J.-B. Wets.
\newblock {\em Variational Analysis}.
\newblock Number 317 in Die {{Grundlehren}} Der Mathematischen
  {{Wissenschaften}} in {{Einzeldarstellungen}}. {Springer}, {Dordrecht}, corr.
  3. print edition, 2009.

\bibitem{Sc1995}
Laurent Schwartz.
\newblock {\em Analyse. 1: {{Th\'eorie}} Des Ensembles et Topologie}.
\newblock Number~42 in Collection {{Enseignement}} Des Sciences. {Hermann},
  {Paris}, nouv. tirage edition, 1995.

\bibitem{Sh2011}
Alexander Shapiro.
\newblock Analysis of stochastic dual dynamic programming method.
\newblock {\em European Journal of Operational Research}, 209(1):63--72,
  February 2011.

\bibitem{Zo.Ah.Su2018}
Jikai Zou, Shabbir Ahmed, and Xu~Andy Sun.
\newblock Stochastic dual dynamic integer programming.
\newblock {\em Mathematical Programming}, March 2018.

\end{thebibliography}

\appendix

\section{Algebraic Riccati Equation}

This section gives complementary results for Section~\ref{sec:Exemples_switch}. We use the same framework and notations introduced in Section~\ref{sec:Exemples_switch}.

\begin{proposition}
 Fix a discrete control $\discret \in \Discret$ and a time step $t\in \ce{0,T-1}$.
 \begin{myenumerate}
  \item The operator $\B_t^\discret: \Sy_n\to \Sy_n^+$ restricted to the pure
  quadratic forms (identified with $\Sy_n$ the space of the symmetric
  semidefinite positive matrices) is given by the \emph{discrete time algebraic Riccati equation}
  \begin{align}
   \label{equa:Riccati_determinist}
   \B_t^\discret\np{M} = C_t^\discret + \np{A_t^\discret}^T M A_t^\discret -
   \np{A_t^\discret}^T M B_t^\discret
   \bp{D_t^\discret + B_t^\discret  M\np{B_t^\discret}^T }^{-1}
   \np{B_t^\discret}^{T} M A_t^\discret
   \eqfinp
  \end{align}
  \item Moreover, when $M \in \Sy_n^+$ Equation~\eqref{equa:Riccati_determinist} can be rewritten as
  \begin{equation} \label{equa:Riccati_reduced}
   \B_t^\discret(M) = \left(A_t^\discret\right)^TM\left(I + B_t^\discret\left(D_t^\discret\right)^{-1}\left(B_t^\discret\right)^TM\right)^{-1}A_t^\discret + C_t^\discret.
  \end{equation}
 \end{myenumerate}
\end{proposition}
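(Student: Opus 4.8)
The plan is to exploit the $2$-homogeneity of pure quadratic forms to eliminate the normalization appearing in the definition of $\B_t^\discret$, thereby reducing the computation to an unconstrained quadratic minimization in the continuous control $u$. First I would observe that for $\func : x \mapsto x^T M x$ with $M \in \Sy_n$, $2$-homogeneity gives $\lVert \tilde z \rVert^2\func\bp{\tilde z/\lVert \tilde z\rVert} = \func(\tilde z)$ with $\tilde z = f_t^\discret(x,u)$, so that
\[
 \B_t^\discret(\func)(x) = \inf_{u\in \U} \Bp{ x^T C_t^\discret x + u^T D_t^\discret u + \bp{A_t^\discret x + B_t^\discret u}^T M \bp{A_t^\discret x + B_t^\discret u} }\eqfinp
\]

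Next I would expand the bracket as a function of $u$ and collect terms. The part that is quadratic in $u$ has Hessian $2\bp{D_t^\discret + (B_t^\discret)^T M B_t^\discret}$, which is symmetric positive definite since $D_t^\discret \succ 0$ by \Cref{hypo:linear-quad-switch} and $(B_t^\discret)^T M B_t^\discret \succeq 0$ because $M \succeq 0$. Hence the minimization is a strictly convex unconstrained quadratic program: the minimizer is unique and characterized by the first-order condition, namely
\[
 u^\star = -\bp{D_t^\discret + (B_t^\discret)^T M B_t^\discret}^{-1} (B_t^\discret)^T M A_t^\discret x\eqfinp
\]
Substituting $u^\star$ back and using $\min_u \np{u^T Q u + 2 b^T u} = -b^T Q^{-1} b$ for $Q \succ 0$, the cross and quadratic terms in $x$ collapse to the correction term of the Riccati map, which yields \cref{equa:Riccati_determinist}. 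Symmetry of the output matrix is immediate by inspection; the fact that the map lands in $\Sy_n^+$ follows from $\B_t^\discret(\func)(x) \geq 0$ for every $x$, itself a consequence of $C_t^\discret \succeq 0$, $D_t^\discret \succ 0$ and $M \succeq 0$.

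Finally, to obtain the reduced form \cref{equa:Riccati_reduced} I would rewrite the correction factor $M - M B_t^\discret \bp{D_t^\discret + (B_t^\discret)^T M B_t^\discret}^{-1} (B_t^\discret)^T M$ using the Sherman--Morrison--Woodbury identity together with the push-through identity $M\bp{\id + B_t^\discret (D_t^\discret)^{-1}(B_t^\discret)^T M}^{-1} = \bp{\id + M B_t^\discret (D_t^\discret)^{-1}(B_t^\discret)^T}^{-1} M$, obtaining $M\bp{\id + B_t^\discret (D_t^\discret)^{-1}(B_t^\discret)^T M}^{-1}$ and hence \cref{equa:Riccati_reduced}. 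The main obstacle is that $M$ is only assumed positive \emph{semi}definite, so $M^{-1}$ need not exist and Woodbury cannot be applied verbatim. I would circumvent this either by a continuity argument, proving the identity first for $M \succ 0$ (where both sides depend continuously on $M$) and then passing to the limit $M + \epsilon \id \to M$, or, more robustly, by a direct algebraic verification that never inverts $M$: multiplying the claimed equality on the right by $\bp{\id + B_t^\discret (D_t^\discret)^{-1}(B_t^\discret)^T M}$ and simplifying via $\id + (B_t^\discret)^T M B_t^\discret (D_t^\discret)^{-1} = \bp{D_t^\discret + (B_t^\discret)^T M B_t^\discret}(D_t^\discret)^{-1}$ reduces both sides to $M$, using only the always-invertible matrices $D_t^\discret$ and $D_t^\discret + (B_t^\discret)^T M B_t^\discret$.
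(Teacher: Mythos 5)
For the first formula \eqref{equa:Riccati_determinist} your argument is essentially identical to the paper's: cancel the normalization by $2$-homogeneity, observe that $u \mapsto u^T D_t^\discret u + f_t^\discret(x,u)^T M f_t^\discret(x,u)$ is strictly convex because $D_t^\discret \succ 0$ and $M \succeq 0$, solve the first-order condition after checking that $D_t^\discret + (B_t^\discret)^T M B_t^\discret$ is positive definite (hence invertible), and substitute the optimal control back. (Incidentally, both you and the paper's proof invert $D_t^\discret + (B_t^\discret)^T M B_t^\discret$; the displayed statement's $D_t^\discret + B_t^\discret M (B_t^\discret)^T$ is a typo, as it is not even dimensionally consistent when $m \neq n$.) The genuine difference is the second formula \eqref{equa:Riccati_reduced}: the paper gives no proof at all, citing instead \cite[Proposition 12.1.1 page 271]{La.Ro1995}, whereas you supply a self-contained derivation. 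Your ``robust'' variant is correct and is the right choice: writing $B$, $D$ for $B_t^\discret$, $D_t^\discret$ and $W := M - MB\bp{D + B^T M B}^{-1}B^T M$, right-multiplication gives
\begin{align*}
W\np{\id + BD^{-1}B^T M} &= M + MB\Bp{D^{-1} - \bp{D + B^T M B}^{-1}\bp{\id + B^T M B D^{-1}}}B^T M \\
&= M + MB\Bp{D^{-1} - \bp{D + B^T M B}^{-1}\bp{D + B^T M B}D^{-1}}B^T M = M \eqfinv
\end{align*}
using exactly your factorization $\id + B^T M B D^{-1} = \bp{D + B^T M B}D^{-1}$, and this never requires $M$ to be invertible. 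The one step you should make explicit is the invertibility of $\id + BD^{-1}B^T M$, which is needed both to conclude $W = M\np{\id + BD^{-1}B^T M}^{-1}$ from the identity above and for \eqref{equa:Riccati_reduced} to be well defined; it holds because $BD^{-1}B^T M$ is a product of two positive semidefinite matrices, hence has real nonnegative spectrum, so every eigenvalue of $\id + BD^{-1}B^T M$ is at least $1$. (Your alternative continuity/Woodbury route would also work, and the push-through identity $M(\id + NM)^{-1} = (\id + MN)^{-1}M$ is in fact valid without invertibility of $M$, but the direct verification is cleaner.) In terms of what each approach buys: the paper's citation is shorter, while your computation makes the result self-contained and shows that the identity needs nothing beyond invertibility of $D$ and of $D + B^T M B$.
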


\begin{proof}\quad

 \noindent $\bullet$ We prove Equation~\eqref{equa:Riccati_determinist}. Note that if $M$ is symmetric, then $\B_t^\discret(M)$ is also symmetric.
 Now, let $t \in \na{T{-}1, T{-}2, \ldots, 0}$ and $M \in \Sy_n$ be fixed.
 Let $x\in \X$, we have that
 \begin{align}
  \B_t^\discret(M)(x)
   & = \inf_{u \in \U} x^T C_t^\discret x + u^T D_t^\discret u + \lVert f_t^\discret\left(x,u \right)\rVert^2 \frac{f_t^\discret\left(x,u \right)^T}{\lVert f_t^\discret\left(x,u \right)\rVert} M \frac{f_t^\discret\left(x,u \right)}{\lVert f_t^\discret\left(x,u \right)\rVert} \notag
  \\
   & = \inf_{u \in \U} x^T C_t^\discret x + u^T D_t^\discret u + f_t^\discret\left(x,u \right)^T M f_t^\discret(x,u) \notag
  \\
   & = x^T C_t^\discret x + \inf_{u \in \U} u^T D_t^\discret u + f_t^\discret(x,u)^T M f_t^\discret(x,u). \label{equa:RandomRiccatiProof_determinist}
 \end{align}
 As $u\mapsto f_t^{\discret}(x,u)$ is linear, $D_t^\discret \succ 0$ and $M \succeq 0$, we have that
 \[
  g: u\in \U \mapsto u^T D_t^\discret u + f_t^\discret(x,u)^T Mf_t^\discret(x,u) \in \R
 \]
 is strictly convex, hence is minimal when $\nabla g(u) = 0$ \emph{i.e.} for $u\in \U$ such that:
 \begin{equation}
  \label{equa:optimalite_determ}
  \bp{D_t^\discret + \left( B_t^\discret \right)^T M B_t^\discret} u
  + \np{B_t^\discret}^T M \np{A_t^\discret}x = 0
  \eqfinp
 \end{equation}
 Now we will show that $D_t^\discret + \left( B_t^\discret \right)^T M B_t^\discret$ is invertible. As $M \in \Sy_n$ and $D_t^{\discret} \in \Sy_n^+$, for every $u \in \U$, we have:
 \begin{equation*}
  u^T \bp{D_t^\discret + \np{ B_t^\discret}^T M B_t^\discret} u
  = \underbrace{u^T D_t^\discret u}_{> 0} + \underbrace{\left( B_t^\discret  u\right)^T M \left(B_t^\discret u\right)}_{\geq 0 }> 0
  \eqfinp
 \end{equation*}
 We have shown that the symmetric matrix $D_t^\discret + \left( B_t^\discret \right)^T M B_t^\discret$ is definite positive and thus invertible. We conclude that Equation~\eqref{equa:optimalite_determ} is equivalent to:
 \begin{equation}\label{equa:inverseRiccati_determinist}
  u  = - \bp{D_t^\discret + \np{B_t^\discret}^T M B_t^\discret }^{-1} \left( B_t^\discret \right)^T M \left( A_t^\discret \right) x
  \eqfinp
 \end{equation}
 Finally, replacing Equation~\eqref{equa:inverseRiccati_determinist} in Equation~\eqref{equa:RandomRiccatiProof_determinist} we get after simplifications that
 \begin{align*}
  \B_t^\discret(M)(x)
   & = x^T \Big( C_t^\discret + \np{A_t^\discret}^T M A_t^\discret
  \\
   & \hspace{1cm} -\np{A_t^\discret}^T M B_t^\discret  \bp{D_t^\discret  + \np{B_t^\discret}^T MB_t^\discret}^{-1}
  \np{B_t^\discret}^{T} M A_t^\discret  \Big) x
  \eqfinv
 \end{align*}
 which gives Equation~\eqref{equa:Riccati_determinist}.

 \noindent $\bullet$ Equation~\eqref{equa:Riccati_reduced} follows from~\cite[Proposition 12.1.1 page 271]{La.Ro1995}.
\end{proof}

\section{Smallest and greatest eigenvalues}
\label{Appendix:Linear_Algebra}

Here we recall some formulas on the lowest and greatest eigenvalues of a matrix.
Denote the smallest eigenvalue of a symmetric real matrix $M$ by $\lambdamin(M)$
(every eigenvalue of $M$ is real) and by $\lambdamax(M)$ its greatest eigenvalue.

\begin{proposition}
 \label{prop:valeurpropre}
 Let $n>0$ be given. We have the following matrix inequalities.
 \begin{subequations}
  \begin{align}
    & \forall (A, B) \in \Sy_{n}^2
   \eqsepv  \lambdamax(A + B) \leq \lambdamax(A) + \lambdamax(B)
   \eqfinp
   \label{eq:vpmax-sum}
   \\
    & \forall (A,B) \in \Mat_n{\times}\Sy_{n}
   \eqsepv
   \lambdamax(A^TBA) \leq \lambdamax(A^TA) \lambdamax(B)
   \eqfinp
   \label{eq:vpmax-prod}
  \end{align}
 \end{subequations}
\end{proposition}

\newcommand{\spectral}[1]{{\lVert #1 \rVert}_{\text{sp}}}

\begin{proof}
 For any matrix $M \in \Mat_n$, the spectral norm of $M$, $\spectral{M}$, (See~\cite[Theorem 1.4.2]{Ci1989})
 is the subordinate matrix norm of the euclidean norm on $\R^n$. When the matrix $M\in \Sy_n$ is real symmetric, we have that $\spectral{M}= \lambdamax\np{M}$ and
 for any real matrix $M \in \Mat_n$, we have that $\lambdamax\np{M^T M} = \lambdamax\np{M M^T} = \lVert M \rVert^2$.

 \noindent $\bullet$ Fix $A,B \in \Sy_n$, we prove Equation~\eqref{eq:vpmax-sum}. As $A+B\in \Sy_n$ and using the fact that a subordinate matrix norm is a norm
 we have that
 \(
 \lambdamax\np{A + B} = \spectral{A + B}\leq \spectral{A}+\spectral{B} = \lambdamax(A) + \lambdamax(B)
 \).

 \noindent $\bullet$ Fix $\np{A,B} \in \Mat_n{\times}\Sy_{n}$. We prove Equation~\eqref{eq:vpmax-prod} as follows
 \begin{align*}
  \lambdamax\np{A^TBA}
   & = \lVert A^TBA \rVert
  \tag{as $A^TBA \in \Sy_n$}
  \\
   & \leq \spectral{A^T} \spectral{B}\spectral{A}
  \tag{$\spectral{\cdot}$ is submultiplicative as a matrix norm}
  \\
   & = \spectral{A}^2 \spectral{B}= \lambdamax(A^TA) \lambdamax(B)
  \eqfinp
 \end{align*}
 This ends the proof.
\end{proof}

\section{Homogenization}
\label{homogenization}
\newcommand{\Homogen}[1]{{\mathcal H}_{#1}}

We explain why, by adding another dimension to the state variable, there is no loss of generality induced by studying pure quadratic forms in Problem~\ref{pb:linear-quad-switch} instead of positive polynomial of degree $2$, nor is there one for studying linear dynamics instead of affine dynamics.

First, we define the operator $\Homogen{2}$ that maps a function $\func$ defined on a finite dimensional vector space $\ArbitrarySet$ to a $2$-homogeneous
function $\Homogen{2}\bp{\func}$ defined on the extended domain $\ArbitrarySet \times \R$ as
follows
\begin{equation}
 \label{h2def}
 \begin{array}{ccll}
  \Homogen{2} : & \overline{\R}^{\ArbitrarySet} & \longrightarrow & \overline{\R}^{\ArbitrarySet \times \R} \\
                & \func                         & \longmapsto     & \Homogen{2}\bp{\func}:
  \np{z,y} \mapsto y^2 \func \np{\frac{z}{y}} \ \text{if} \ y \neq 0, \ 0 \ \text{otherwise}.
 \end{array}
\end{equation}
Thus, if $\func$ is a positive polynomial of degree $2$, then $\Homogen{2}\np{\func}$ is a $2$-homogeneous convex quadratic form (with possibly a mixed term in $x$ and $u$).
In a similar way, we define the operator $\Homogen{1}$ that maps any function $\func$ defined on a domain $\ArbitrarySet$ and taking values in $\ArbitrarySet$
to a $1$-homogeneous function $\Homogen{1}\bp{\func}$ as follows
\begin{equation}
 \begin{array}{ccll}
  \Homogen{1} : & \ArbitrarySet^{\ArbitrarySet} & \longrightarrow & \np{\ArbitrarySet \times {\R}}^{\ArbitrarySet \times {\R}}                                                               \\
                & \func                         & \longmapsto     & \Homogen{1}\bp{\func}: \np{z,y} \mapsto \np{y \func \np{\frac{z}{y}} ,y} \ \text{if} \ y \neq 0, \ 0 \ \text{otherwise}.
 \end{array}
\end{equation}

Now consider $\left(\B_t\right)_{t\in \ce{0,T-1}}$ the Bellman operators associated to Problem~\ref{pb:linear-quad-switch}
\begin{equation}
 \begin{array}{ccll}
  \B_t: & \overline{\R}^{\X} & \longrightarrow & \overline{\R}^{\X}        \\
        & \func              & \longmapsto     & \B_t\bp{\func}: x \mapsto
  \min_{\substack{u \in \U                                                 \\\discret \in \Discret}} c_t^{\discret} \np{x,u} + \func \np{ f_t^{\discret}\np{x,u} }
  \eqfinv
 \end{array}
 \label{bellman-non-homogene}
\end{equation}

We denote by $\np{\B_t^{\Homogen{}}}_{t\in \ce{0,T-1}}$, the family of Bellman operators obtained through homogenization (with $\ArbitrarySet=\X\times\U$) as follows
\begin{equation}
 \begin{array}{ccll}
  \B_t^{\Homogen{}}: & \overline{\R}^{\X\times {\R}} & \longrightarrow & \overline{\R}^{\X \times {\R}}                                            \\
                     & \varphi                       & \longmapsto     & \B_t^{\Homogen{}} \bp{\varphi}: \np{x,y} \mapsto \min_{\substack{u \in \U \\\discret \in \Discret}} \Homogen{2}\bp{c_t^{\discret}}\bp{x,u,y}\\
                     &                               &                 & \hspace{3cm} + \varphi \np{ \Homogen{1} \bp{f_t^{\discret}}\np{x,u,y} } .
  \label{bellman-homogeneise}
 \end{array}
\end{equation}

The next proposition relates the solution of Problem~\ref{pb:linear-quad-switch} with non-homogeneous functions to the solution of the associated
homogenized problem.

\begin{proposition}
 \label{HomogeneVSnonhomogene}
 Let $\np{V_t}_{t\in \ce{0,T}}$ (resp. $\np{\widetilde{V}_t}_{t\in \ce{0,T}}$)
 denote the solutions of the Dynamic Programming Equation~\eqref{DynamicProgramming}
 system of equations associated with the operators $\np{\B_t}_{t\in \ce{0,T-1}}$ defined by Equation~\eqref{bellman-non-homogene}
 (resp. $\np{\B^{\Homogen{}}_t}_{t\in \ce{0,T-1}}$ defined by Equation~\eqref{bellman-homogeneise}) and final cost $\psi$
 (resp. $\Homogen{2}\bp{\psi}$)). Then, for every $x\in \X$ and $t\in \ce{0,T}$ , we have that
 \(
 V_t\np{x} = \widetilde{V}_t\np{x,1} \).
\end{proposition}

\begin{proof}
 First, it is easy to prove by backward recursion on time $t\in \ce{0,T}$, that the mappings $\widetilde{V}_t$ for every $t\in \ce{0,T}$, are $2$-homogeneous.
 Second, we will show by backward recursion on time that, for every $t\in \ce{0,T}$,
 \begin{equation}
  \label{eqtmp:1}
  \widetilde{V}_t =\Homogen{2}\bp{V_t}.
 \end{equation}
 Then, the result will follow by evaluating Equation~\eqref{eqtmp:1} at $y=1$.
 At the final time $t=T$, we have that
 \[
  \widetilde{V}_T := \Homogen{2}\bp{\psi} = \Homogen{2}\bp{V_T}.
 \]
 Now, assume that for some $t\in \ce{0,T-1}$, we have that $\widetilde{V}_{t+1} = \Homogen{2}\bp{V_{t+1}}$, for $(x,y)\in \X\times \R$ we successively obtain that
 \begin{align*}
  \widetilde{V}_t\np{x,y}
   & = \B_t^{\Homogen{}}\np{\widetilde{V}_{t+1}}\np{x,y}                 \\
   & = \min_{\substack{u \in \U                                          \\\discret \in \Discret}}\Homogen{2}\bp{c_t^{\discret}}\np{x,u,y} + \widetilde{V}_{t+1}\Bp{\Homogen{1}\bp{f_t^{\discret}}\np{x,u,y}}
  \tag{$2$-homogeneity of $\widetilde{V}_{t+1}$}                         \\
   & = \min_{\substack{u \in \U                                          \\\discret \in \Discret}}\Homogen{2}\bp{c_t^{\discret}}\np{x,u,y} +\Homogen{2}\bp{V_{t+1}} \Bp{yf_t^{\discret}\np{\frac{x}{y}, \frac{u}{y}}, y} \tag{Induction hyp. and def.}\\
   & = \min_{\substack{u \in \U                                          \\\discret \in \Discret}} y^2 c_t^{\discret}\np{\frac{x}{y}, \frac{u}{y}} +y^2 V_{t+1} \np{f_t^{\discret}\np{\frac{x}{y}, \frac{u}{y}}}
  \tag{by Equation~\eqref{h2def}}                                        \\
   & = y^2 \min_{\substack{u' \in \U                                     \\\discret \in \Discret}} c_t^{\discret}\np{\frac{x}{y}, u'} + V_{t+1}\np{f_t^{\discret}\np{\frac{x}{y},u'}} \tag{$u'= u/y$} \\
   & =y^2 \B_t\np{V_{t+1}}\np{\frac{x}{y}}                               \\
   & =\Homogen{2}\bp{V_t}\np{x,y} \tag{by Equation~\eqref{h2def}}\eqfinp
 \end{align*}
 This ends the proof.
\end{proof}

Lastly, we briefly explain how to get rid of the possible mixed terms in both $u$ and $x$ in the cost functions after homogenization. That is, there is no loss of generality to consider the case of cost functions which are positive polynomials of degree $2$ and affine cost than to consider the case studied in \S\ref{homogeneous_case}, \emph{i.e.} pure quadratic costs and linear functions. From Proposition~\ref{HomogeneVSnonhomogene}, we have seen that one can consider the case where the cost functions are $2$-homogeneous with linear dynamics. Assume (for the sake of simplicity, we omit the discrete control $v$ here) that the cost function $c_t$ is of the form
\[
 c_t\np{x,u} := x^T P_1 x + x^T P_2 u + u^T P_3 u,
\]
where $P_1$, $P_2$ and $P_3$ are symmetric semidefinite positive matrices of coherent dimensions, with $P_3$ being definite positive. Moreover, fix a $2$-homogeneous convex quadratic form $\psi$ and assume the dynamic $f_t$ to be linear of the form
\[
 f_t\np{x,u} := A x + Bu.
\]
Setting $Q_1 := P_1 - \frac{1}{4}P_2P_3^{-1}P_2^T$, $Q_2 := P_3$, $L := \frac{1}{2} P_3^{-1} P_2^{T}$, one has that the cost function $\np{x,u} \mapsto c_t'\np{x,u} := x^T Q_1 x + u^T Q_2 u$ is a quadratic function without mixing term and $\np{x,u} \mapsto f_t\np{x,u} := \np{A + L}x + Bu$ is linear. Furthermore, by straightforward computations, one can check that $c_t$ and $f_t$ satisfy:
\begin{equation}
 \label{eq:couts-dyn-sans-croises}
 c_t\np{x,u + Lx} = c_t'(x,u) \quad \text{and} \quad f_t\np{x,u+Lx} = f_t'(x,u).
\end{equation}
Note that as $Q_2 = P_3$, the matrix $Q_2$ is symmetric definite positive and as
$c_t$ is positive and by Equation~\eqref{eq:couts-dyn-sans-croises} for every $x\in \X$
and $u \in \U$
\[
 x^T Q_1 x = c_t'\np{x,0}=c_t\np{x,0 + Lx} \geq 0,
\]
then $Q_1$ is symmetric semidefinite positive. Thus the quadratic function $c_t'$ is convex and a pure quadratic form in the sense of Definition~\ref{purequadform}.

Consider the Bellman operator associated with the costs $c_t'$ and dynamics $f_t'$:
\begin{equation}
 \begin{array}{ccll}
  \B_t': & \overline{\R}^{\X} & \longrightarrow & \overline{\R}^{\X}         \\
         & \func              & \longmapsto     & \B_t'\bp{\func}: x \mapsto
  \min_{u \in \U}                                                 c_t' \np{x,u} + \func \np{ f_t'\np{x,u} }
  \eqfinv
 \end{array}
 \label{bellman-sans-terme-mixte}
\end{equation}
Thus, for any function $\func \in \overline{\R}^{\X}$ and every $x \in \X$, recall that $\U = \R^m$ is unconstrained, so we have that
\begin{align}
 \B_t\np{\func}\np{x} & = \min_{u \in \U} c_t(x,u) + \func\np{f_t\np{x,u}} \notag                       \\
                      & = \min_{u' = u + Lx \in \U} c_t\np{x, u + Lx} + \func\np{f_t\np{x,u+Lx}} \notag \\
                      & = \min_{u' \in \U} c_t'(x,u') + \func\np{f_t\np{x,u'}}    \notag                \\
 \B_t\np{\func}\np{x} & = \B_t'\np{\func}\np{x} \label{supersupersuper}.
\end{align}
From Equation~\eqref{supersupersuper}, one can deduce by backward recursion (as done in
Proposition~\ref{HomogeneVSnonhomogene}) on the time step $t\in \ce{0,T}$, that the value
functions $V_t$ (resp. $V_t'$) of the Dynamic Programming problem with Bellman
operators $\B_t$ (resp. $\B_t'$) and final cost function $\psi$ (resp. $\psi$ as
well) satisfy \( V_t = V_t'\) \eqfinp

\end{document}